\documentclass{article}

\usepackage{PRIMEarxiv}

\usepackage[utf8]{inputenc} 
\usepackage[T1]{fontenc}    
\usepackage{hyperref}       
\usepackage{url}            
\usepackage{booktabs}       
\usepackage{amsfonts}       
\usepackage{nicefrac}       
\usepackage{microtype}      
\usepackage{lipsum}
\usepackage{fancyhdr}       
\usepackage{graphicx}       
\usepackage{amsmath}
\usepackage{amsthm}
\usepackage{appendix}
\usepackage{cleveref}
\usepackage{subcaption}
\graphicspath{{media/}}     

\theoremstyle{plain} 
\newtheorem{lemma}{Lemma}[section]
\newtheorem{theorem}{Theorem}[section]
\newtheorem{proposition}{Proposition}[section]
\newtheorem{corollary}{Corollary}[section]

\theoremstyle{remark}
\newtheorem{remark}{Remark}

\theoremstyle{definition} 
\newtheorem{definition}{Definition}[section]
\newtheorem{example}{Example}[section]

\pagestyle{fancy}
\thispagestyle{empty}
\rhead{ \textit{ }} 

\fancyhead[LO]{On the convergence of continuous-time fictitious play}

\title{On the convergence of fictitious play algorithm in repeated games via the geometrical approach
}

\author{
  Zhouming Wu \\
  Key Lab of Systems and Control, ISS \\
  Academy of Mathematics and Systems Science \\
  CAS Beijing 10094, China\\
  \texttt{wuzhouming@amss.ac.cn} \\
   \And
  Yifen Mu \\
  Key Lab of Systems and Control, ISS \\
  Academy of Mathematics and Systems Science \\
  CAS Beijing 10094, China\\
  \texttt{mu@amss.ac.cn} \\
  \AND
  Xiaoguang Yang \\
  Key Lab of Management, Decision and Information System, ISS \\
  Academy of Mathematics and Systems Science \\
  CAS Beijing 10094, China\\
  \texttt{xgyang@iss.ac.cn} \\
}

\begin{document}
\maketitle

\begin{abstract}
As the earliest and one of the most fundamental learning dynamics for computing NE, fictitious play (FP) has being receiving incessant research attention and finding games where FP would converge (games with FPP) is one central question in related fields. In this paper, we identify a new class of games with FPP, i.e., $3\times3$ games without IIP, based on the geometrical approach by leveraging the location of NE and the partition of best response region. 
During the process, we devise a new projection mapping to reduce a high-dimensional dynamical system to a planar system. And to overcome the non-smoothness of the systems, we redefine the concepts of saddle and sink NE, which are proven to exist and help prove the convergence of CFP by separating the projected space into two parts. Furthermore, we show that our projection mapping can be extended to higher-dimensional and degenerate games.
\end{abstract}

\keywords{Learning in Games \and Fictitious Play\and Convergence\and Geometrical Approach\and Non-smooth Dynamical Systems\and Nash Equilibrium Property}

\section{Introduction}
Learning algorithms provide iterative methods to solve Nash equilibrium (NE), which was proposed by \cite{nash1950non} in 1950 and is the most widely accepted solution concept in game theory. Among all the learning algorithms, fictitious play (FP) is the earliest and fundamental one \cite{brown1951iterative, fudenberg1998theory, krishna1998convergence}. And it still attracts research attention now, with numerous applications \cite{heinrich2015fictitious, heinrich2016deep} and extensions into more complex settings \cite{sayin2022fictitious0-sum, sayin2022fictitious}.

The idea behind FP is intuitive: each player estimates his opponent's mixed strategy to be the empirical distribution of the pure strategies during the history, and then takes a best response to the estimation. 
The game is said to have fictitious play property (FPP), if the empirical distribution of FP converges to NE. 
It have been proved that FP can converge to NE in zero-sum games \cite{robinson1951iterative}, $2\times 2$ games \cite{miyasawa2010convergence}, potential games \cite{monderer1996fictitious}, $ 2\times n $ games \cite{bergerFictitiousPlayGames2005} and other games with special properties, such as games solvable by iterated strict dominance \cite{nachbar1990evolutionary}, non-degenerate quasi-supermodular games with diminishing returns or of dimension $3\times 
m$ or $4\times 4$ \cite{bergerTwoMoreClasses2007}. Recently, based on decomposition of games
, it has been proven that if a game is zero-sum or potential, then FP would also converge to NE in the linear combination of some decomposed components of the original game.

However, there are games in which FP does not converge. As early as 1964, \cite{shapley1963some} constructed a simple $3\times3$ counterexample. Other classes of games without FPP includes \cite{hahnShapleyPolygonsGames2010, jordan1993three}.  Recently, Shapley game was extended to a family of games with one parameter, and the authors found that under FP dynamics, bifurcation would emerge and more complicated pattern including erratic behavior would appear as the parameter changes \cite{sparrow2008fictitious,van2011fictitious}. 
In fact, this non-convergence happens for general learning algorithms. For example, the algorithm "Follow the Regularized Leader"(FTRL) is proved to be Poincar\'e recurrent in zero-sum games \cite{mertikopoulos2018cycles}. For $2\times2$ competitive games, \cite{muthukumar2024impossibility} proved that no-regret algorithms which are mean-based and monotonic cannot converge to any NE. And researchers even proved that learning in matrix games can approximate arbitrary dynamical systems including Lorenz system, hence surely does not converge \cite{andrade2021learning}. The latest research \cite{milionisImpossibilityTheoremGame2023} shows that there exists some game that no learning algorithm could converge to NE.  

Therefore concerning FP, a central question is to identify the classes of games having FPP, and the problem is attracting the researchers' attention incessantly \cite{bergerTwoMoreClasses2007,chen2022convergence}. In this paper, we will study this problem, and identify a novel game class with FPP, which is based on geometrical properties and analysis of the dynamics in the strategy simplex, combining the theory of dynamical systems and other geometrical tools. Since two games with completely different payoff matrices could have the same geometrical property and hence induce the same FP dynamics, it is more natural to prove the convergence by geometrical approach.

Geometrical approach was ever utilized to analyze Replicator Dynamics (RD) in quite a lot of literature, since RD is formulated by an ordinary differential equation and hence induces a dynamical system naturally, see the chapter 3 of \cite{fudenberg1998theory} for summation. Concerning the analysis of FP, the approach can be at least traced back to \cite{metrick1994fictitious}, which gives a new geometrical proof of the convergence of fictitious play in $2\times 2$ games. Recently, such geometrical approach to study FP dynamics is receiving more and more attention \cite{bergerFictitiousPlayGames2005,sparrow2008fictitious, van2011fictitious, hahnShapleyPolygonsGames2010, van2011hamiltonian,berger2012non}. In these works, researchers mainly focus on the continuous-time version of FP (CFP), which serves as a scaffolding to understand discrete-time cases because of its analytical simplicity. For example, In zero-sum games, CFP can inspire Hamiltonian dynamics which was used to prove the convergence results by \cite{van2011hamiltonian}. And for games with some specific geometrical properties, CFP trajectory can be analyzed by the theory of projective geometry, which helps to prove the convergence \cite{berger2012non}. Besides, using geometrical approach,  studied a family of $4\times 4$ games and demonstrated the conditions as well as examples under which CFP does not converge \cite{hahnShapleyPolygonsGames2010}. Since the analysis is implemented on the strategy simplex, the geometrical approach is more straightforward and can greatly shorten the exposition \cite{metrick1994fictitious}, compared with the algebraic method that uses the property about the payoff matrix elements, such as zero-sum, potential, (quasi-)supermodular or diminishing returns. 

To use geometrical approach, two main challenges need to be overcome. The first challenge is the dimension of phrase space, which would increases rapidly as the payoff matrices become large. Even for $3\times 3$ games, we will encounter a four-dimensional dynamical system. To deal with it, we propose a new projection method to reduce dimension of $ 3\times 3 $ games and transfer the CFP dynamics from four-dimensional into planar system, on which the theory is more developed. And hence we could further define a Poincar\'e map as a tool to analyze the convergence. 

To define the projection mapping and identify the game classes, we highlight a novel geometrical object of the game, the indifferent point, which is a generalization of strategy profile and is proved to exist uniquely for any $n\times n$ games. At the indifferent point, for each player, all actions have the same "payoff", while different from ordinary strategy profile, the probability of some pure strategy could be negative but the sum of all probabilities is still equal to 1. Geometrically, the indifferent point is possible to be inside (e.g. Shapley game) or outside the strategy simplex. If the indifferent point lies outside the strategy simplex, we will call the game without \emph{Internal Indiffernt Point (IIP)}, which is the focus of this paper. For such games without IIP, we can define the projection mapping and prove the convergence of CFP dynamics on the projected space.

The second challenge is related to the nature of FP or CFP: the vector field induced by CFP is not continuous. Hence, we have to study piecewise vector fields \cite{euzebio2023periodic},  which makes it impossible to analyze the stability of NE by calculating the Jacobian matrix. Moreover, common concepts such as saddle, sink cannot be directly applied. To deal with this, we redefine the saddle and sink for $3\times 3$ games according to the geometrical location of NE, and prove the existence of saddle NE and sink NE for games with multiple NEs. Under the new definition, the behavior of CFP dynamics near the saddle or sink is similar to smooth system. And we will show that the existence of saddle NE plays an important role to prove the convergence. 

Briefly, this paper makes three main contributions. First, based only on the geometrical property of games and learning dynamics, we identify a novel class of games without IPP, where FP would converge to NE. Second, for this class of games, we prove some properties, including the general existence and uniqueness of the indifferent point as well as the existence of saddle NE and sink NE. Third, we define a new projection method to study game dynamics, which can be extended more game classes, including degenerate games and $4\times 4$ games. Moreover, the geometrical approach enable us to detect many interesting phenomena, e.g. for a family of games with only one element of payoff matrix changing, we find that not only CFP dynamics but also the NE show the abrupt regime shift. 

This paper differs from the results or method in the literature in several aspects. 
Compared to the recently reported game classes with FPP, i.e., $3\times m$ and $4\times 4$ quasi-supermodular games \cite{bergerTwoMoreClasses2007}, this paper introduce a new class, i.e. $3\times3$ games without IPP. By constructing an example in Section \ref{sec4.1}, we can show that there exists some game within our class but not  quasi-supermodular. As for the approach, this paper is inspired to reduce the dimension by \cite{bergerFictitiousPlayGames2005, sparrow2008fictitious, van2011fictitious}, but the specific method is quite different. In \cite{bergerFictitiousPlayGames2005}, the author considered $2\times n$ games and then used the payoff matrix to project one player's $n$-dimensional strategy vector to a $2$-dimensional payoff vector. Different from it, this paper projects higher-dimensional strategy to lower-dimensional strategy directly. In \cite{sparrow2008fictitious, van2011fictitious}, the authors also mentioned the projection method, but since they mainly focused on $3\times3$ games with fully-mixed NE, the projection mapping in this paper cannot be well defined.

This paper is organized as follows. Section \ref{sec2:preliminary} gives the problem formulation and some necessary background about the non-convergent dynamics. Section \ref{sec3:result} presents our projection method, and we utilize it to prove the convergence of CFP by two cases: games with unique NE or multiple NEs. Section \ref{sec4:further} first compares our result with \cite{bergerTwoMoreClasses2007} by constructing an example, and further extends the projection method to degenerate games and $4\times4$ games with interesting phenomena on CFP dynamics. Section \ref{sec5:conclusion} summarizes the paper and gives future work.

\section{Preliminary}\label{sec2:preliminary}
\subsection{Games and Nash Equilibrium}
Let $(A,B)$ be a two-player normal form game where $A,B$ are both $3\times 3$ payoff matrices corresponding to Player A and Player B respectively. Player A has pure strategies ${e^{A}_{1},e^{A}_{2},e^{A}_{3}}$ and Player B has pure strategies $e^{B}_{1},e^{B}_{2},e^{B}_{3} $. A mixed strategy is a distribution over all pure strategies of each player, and all mixed strategies form a strategy simplex $\Delta_{A}:=\{\mathbf{x}\in\mathbb{R}^{3}\mid x_{i}\geq0,\sum_{i=1}^{3}x_{i}=1\}$ for Player A, and $\Delta_{B}$ for Player B. Hence $e^{A}_{i}$ is the vertex of $\Delta_{A}$ whose $i$-th element is $1$ and other are $0$. And we use the endpoints to denote the edge of $ \Delta_{A} $, for example, the edge $ \{\mathbf{x}\in\Delta_{A}\mid x_{3}=0,x_{1}\leq 0, x_{2}\leq0, x_{1}+x_{2}=1\} $ is denoted as $ e^{A}_{1}e^{A}_{2} $. 

Let $\Delta:=\Delta_{A}\times\Delta_{B}$. Given a mixed strategy profile $(\mathbf{x},\mathbf{y})\in\Delta$, the expected utility of Player A (or B) is $\mathbf{x}^{T}A\mathbf{y}$ (or $\mathbf{x}^{T}B\mathbf{y}$, respectively). Hence when Player A uses pure strategy $e^{A}_{i}$ against $y$, his utility is $(Ay)_{i}$. We define 
\[ \operatorname{BR}_{A}(\mathbf{y}):=\operatorname{\arg\max}_{e^{A}_{i}\in E^{A}}(Ay)_{i}, \operatorname{BR}_{B}(\mathbf{x}):=\operatorname{\arg\max}_{e^{B}_{j}\in E^{B}}(x^{T}B)_{j}, \]
which is a set usually.

Nash's well-known concept of equilibrium is as below.

\begin{definition}\label{def:NE}
	A mixed strategy profile $ (\bar{\mathbf{x}},\bar{\mathbf{y}}) $ is called a Nash equilibrium of $ (A,B) $, if
	$$\begin{aligned}
		&\bar{\mathbf{x}}^{T}A\bar{\mathbf{y}}\geq (\mathbf{x}^{\prime})^{T}A\bar{\mathbf{y}}, &\forall \mathbf{x}^{\prime}\in \Delta_{A}, \mathbf{x}^{\prime} \neq \bar{\mathbf{x}};\\ &\bar{\mathbf{x}}^{T}B\bar{\mathbf{y}}\geq \bar{\mathbf{x}}^{T}B\mathbf{y}^{\prime}, &\forall\mathbf{y}^{\prime}\in\Delta_{B}, \mathbf{y}^{\prime} \neq \bar{\mathbf{y}}.
	\end{aligned}
	$$ 
\end{definition}

For some player's mixed strategy $ \mathbf{x} $, its support is defined as the set of all pure actions with positive probability, i.e.,
$$
\operatorname{supp}(\mathbf{x})=\{e_{i}\mid x_{i}>0, \text{ for }\mathbf{x}=(x_{1},x_{2},x_{3})\}.
$$ 
We say a strategy profile $ (\mathbf{x},\mathbf{y}) $ has support with size $ m\times n $, if $ \vert\operatorname{supp}(\mathbf{x})\vert =m $, and $\vert\operatorname{supp}(\mathbf{y})\vert = n $. Hence a pure NE has support with size $ 1\times 1 $, and a fully mixed NE has support with size $ 3\times 3 $. Based on this concept, we have the following equalization principle which characterizes the relationship between NE and its support. 

\begin{lemma}[Equalization Principle (\cite{gtalive}, Theorem 7.1 \cite{bookNarahari})] 
	\label{lem:indifferent-of-support}
	Given a normal form game $ (A,B) $, the mixed strategy profile $(\mathbf{x},\mathbf{y})$ is a NE iff for any player $ k=A,B $, given the opponent's mixed strategy $\mathbf{y}$, his own strategy $\mathbf{x}$ satisfies
	\begin{enumerate}
		\item $ u_{k}(e_{j}^{k},\mathbf{y}) $ is the same for all $ e_{j}^{k}\in \operatorname{supp}(\mathbf{x}) $, and
		\item $ u_{k}(e_{j}^{k},\mathbf{y})\geq  u_{k}(e_{j^{\prime}}^{k},\mathbf{y})$, where $e_{j}^{k}\in\operatorname{supp}(\mathbf{x})$ and $e_{j^{\prime}}^{k}\notin \operatorname{supp}(\mathbf{x}) $.
	\end{enumerate}
\end{lemma}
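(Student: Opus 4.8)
The plan is to exploit the bilinearity of the expected utility and reduce the Nash condition to a pair of best-response conditions, each of which is then characterized by an elementary convexity argument on the simplex. Observe first that $u_k(\mathbf{x},\mathbf{y})$ is linear in each player's own strategy; for Player A one has $\mathbf{x}^{T}A\mathbf{y}=\sum_{i}x_{i}(A\mathbf{y})_{i}=\sum_{i}x_{i}\,u_{A}(e^{A}_{i},\mathbf{y})$, so the expected payoff is a convex combination of the pure-strategy payoffs with weights $x_{i}$. By Definition \ref{def:NE}, $(\mathbf{x},\mathbf{y})$ is a NE exactly when $\mathbf{x}$ maximizes $u_{A}(\cdot,\mathbf{y})$ over $\Delta_{A}$ and $\mathbf{y}$ maximizes $u_{B}(\mathbf{x},\cdot)$ over $\Delta_{B}$. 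Since the two conditions are symmetric, it suffices to characterize the best-response condition for a single player and then apply the same reasoning to the other.

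For the forward direction ($\Rightarrow$), I would fix Player A, let $v:=\max_{i}u_{A}(e^{A}_{i},\mathbf{y})$ denote the highest pure-strategy payoff against $\mathbf{y}$, and argue that being a best response forces $x_{i}=0$ whenever $u_{A}(e^{A}_{i},\mathbf{y})<v$. Indeed, writing $u_{A}(\mathbf{x},\mathbf{y})=\sum_{i}x_{i}u_{A}(e^{A}_{i},\mathbf{y})\leq v$, equality holds iff the whole probability mass sits on maximizers; if some suboptimal pure action carried positive weight the inequality would be strict, so one could strictly improve by transferring that weight onto a maximizer, contradicting optimality. This immediately yields conditions (1) and (2): every pure action in $\operatorname{supp}(\mathbf{x})$ attains the common value $v$, and every pure action outside the support has payoff at most $v$.

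For the converse ($\Leftarrow$), I would assume (1) and (2) hold for Player A and show $\mathbf{x}$ is a best response. Let $v$ be the common payoff of the support actions guaranteed by (1); by (2) every pure action has payoff $\leq v$. Then $u_{A}(\mathbf{x},\mathbf{y})=\sum_{i\in\operatorname{supp}(\mathbf{x})}x_{i}v=v$, while for any $\mathbf{x}'\in\Delta_{A}$ we get $u_{A}(\mathbf{x}',\mathbf{y})=\sum_{i}x'_{i}u_{A}(e^{A}_{i},\mathbf{y})\leq v\sum_{i}x'_{i}=v$, so $\mathbf{x}$ maximizes $u_{A}(\cdot,\mathbf{y})$. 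Running the identical argument for Player B and invoking Definition \ref{def:NE} closes the equivalence in both directions.

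Since the whole argument is elementary linear algebra over the simplex, there is no serious obstacle; the only points requiring care are the bookkeeping that keeps conditions (1) and (2) tied to the single scalar $v$, and the validity of the strict-improvement step, which relies on a maximizer always existing to receive the shifted weight (guaranteed because $\Delta_{A}$ is nonempty and $u_{A}(\cdot,\mathbf{y})$ is continuous on a compact set). I would also remark that conditions (1) and (2) together are precisely the statement $\operatorname{supp}(\mathbf{x})\subseteq\operatorname{BR}_{A}(\mathbf{y})$, which directly links the lemma to the best-response notation introduced above and will be convenient for the later geometrical analysis.
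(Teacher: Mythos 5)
Your proof is correct. Note that the paper does not prove this lemma at all---it is quoted as a known result with citations to standard references---so there is nothing internal to compare against; your argument is exactly the standard textbook proof (bilinearity of the payoff, payoff as a convex combination of pure-strategy payoffs, and the weight-shifting contradiction), and both directions are handled without gaps. Your closing observation that conditions (1) and (2) amount to $\operatorname{supp}(\mathbf{x})\subseteq\operatorname{BR}_{A}(\mathbf{y})$ is also accurate and is precisely how the lemma gets used later in the paper (e.g.\ in locating mixed NEs on the indifferent lines in Proposition \ref{pro:loc-NE}).
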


Usually the NE of a game is not unique. One surprising but beautiful result is that for nondegenerate games, NEs are isolated, and the number is always finite and odd (\cite{wilsonComputingEquilibriaNperson1971,harsanyiOddnessNumberEquilibrium1973}). On the contrary, for degenerate games, which takes zero measurement in payoff matrix space, NE can constitute a continuum. This paper focuses mainly on the game with finite and isolated NEs, but we will also discuss degenerate games at the end part, which shows interesting phenomena.

\begin{remark}\label{rmk:degenerate}
	There is no generally accepted definition of degenerate games. For example, some literature defines a game to be nondegenerate if for any pure strategy of each player there is a unique best response \cite{bergerFictitiousPlayGames2005}, which is also adopted in this paper. Some other literature just implicitly separates those two cases according to whether the number of equilibrium is finite \cite{milionisImpossibilityTheoremGame2023}.
\end{remark}

\subsection{Fictitious Play} 
In the fictitious play setting, players play the game $(A,B)$ repeatedly. And they treat the opponent's empirical distribution until time $t$ as the estimation of opponent's mixed strategy at time $t+1$, then choose the best response against it. Denote pure strategy of Player A at time $t$ by $ \mathbf{a}({t})\in\{e^{A}_{1},e^{A}_{2},e^{A}_{3}\} $, and pure strategy of Player B as $ \mathbf{b}({t})$. Then the empirical distribution $\mathbf{x}(t)$ of Player A and $\mathbf{y}(t)$ of Player B are
\[ \mathbf{x}(t)=\frac{\sum_{\tau=1}^{t}\mathbf{a}(\tau)}{t},\quad \mathbf{y}(t)=\frac{\sum_{\tau=1}^{t}\mathbf{b}(\tau)}{t}. \]
Given $ (\mathbf{x}(t),\mathbf{y}(t)) $, each player will take the best response, i.e.,  \begin{equation*}
	\mathbf{a}({t+1})=\operatorname{BR}_{A}\left(\mathbf{y}(t)\right),\ \mathbf{b}({t+1})=\operatorname{BR}_{B}\left(\mathbf{x}(t)\right). 
\end{equation*}
Given the initial point $(\mathbf{x}(0), \mathbf{y}(0))\in\Delta$, the dynamics can be represented by the following recursive equations 
\begin{equation} \label{equ:DFP}
	\left\{
	\begin{array}{lr}
		\mathbf{x}(t+1) = \frac{1}{t+1}\Big(\operatorname{BR}_{A}\left(\mathbf{y}(t)\right)+t\cdot \mathbf{x}(t)\Big),\\
		\mathbf{y}(t+1) = \frac{1}{t+1}\Big(\operatorname{BR}_{B}\left(\mathbf{x}(t)\right)+t\cdot \mathbf{y}(t)\Big).
	\end{array}
	\right.
\end{equation}
Equation (\ref{equ:DFP}) is the updating rule of Discrete-time Fictitious Play (DFP, or FP for short). Since the best response is not always a singleton, some literature would specify a tie-breaking rule in prior. For convenience, we suppose both players would always choose the pure strategy with lower index when there are multiple choices. 

If we go from discrete-time step to continuous-time, we can get Continuous-time Fictitious Play (CFP), with $t\in[1,\infty)$,
\begin{equation} 
	\label{equ:cfp-def}
	\begin{cases}
		\dot{x} = \frac{1}{t}\big(\operatorname{BR}_{A}(y(t))-x(t)\big),\\
		\dot{y} = \frac{1}{t}\big(\operatorname{BR}_{B}(x(t))-y(t)\big).
	\end{cases}
\end{equation}
CFP can provide clearer understanding to the dynamics and give much more convenience for analysis. 

It is worth noting that CFP is similar to the Best Response Dynamic (BRD), whose equations are
\begin{equation} 
	\label{equ:cbr-def}
	\left\{
	\begin{array}{lr}
		\dot{x} = \operatorname{BR}_{A}(y(t))-x(t),\\
		\dot{y} = \operatorname{BR}_{B}(x(t))-y(t).
	\end{array}
	\right.
\end{equation}
The only difference between dynamics \ref{equ:cfp-def} and (\ref{equ:cbr-def}) is that in (\ref{equ:cbr-def}), $ \frac{1}{t} $ is omitted, leading to an exponentially slower evolution in CFP, while they would share 
the same orbit shape. See detailed analysis in the next section. 

Given the dynamical system (\ref{equ:DFP})  and initial point $ (\mathbf{x}(0),\mathbf{y}(0)) $, FP is said to \emph{converge}, if there exists some strategy profile $(\mathbf{x}^{*},\mathbf{y}^{*})\in\Delta$ such that $ \big(\mathbf{x}(t),\mathbf{y}(t)\big)\to (\mathbf{x}^{*},\mathbf{y}^{*}) $ as $ t\to\infty $. In the literature, it is a central question to distinguish the classes of games in which FP would converge. One of the milestone results is
 
\begin{lemma}[\cite{bergerFictitiousPlayGames2005}]\label{lem:2xn}
	Both FP and CFP converges to Nash equilibrium when one of two players has only $2$ pure strategies.
\end{lemma}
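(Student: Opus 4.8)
The plan is to work with the continuous-time dynamics and then transfer back to the discrete case. First I would pass from CFP (\ref{equ:cfp-def}) to BRD (\ref{equ:cbr-def}): since the factor $1/t$ only reparameterizes time, the two share the same orbits, so convergence of the CFP empirical distribution is equivalent to convergence of the BRD orbit, and the analytic form $\mathbf{y}(t)=v+(\mathbf{y}(t_0)-v)e^{-(t-t_0)}$ of a linear leg is available. Assume without loss of generality that Player A is the one with two actions, so $\mathbf{x}=(p,1-p)$ is determined by the single scalar $p\in[0,1]$, while $\mathbf{y}\in\Delta_{B}\subset\mathbb{R}^{n}$. I would emphasize at the outset that, unlike the zero-sum or potential cases, a general $2\times n$ game admits no global Lyapunov function, so the argument must be genuinely geometric and exploit the one-dimensionality of $\Delta_{A}$.

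Next I would record the piecewise-linear structure of the two best-response maps. Player A's best response depends only on the sign of the affine functional $g(\mathbf{y}):=(A\mathbf{y})_{1}-(A\mathbf{y})_{2}$ on $\Delta_{B}$; the indifference hyperplane $\{g=0\}$ splits $\Delta_{B}$ into the region where $\operatorname{BR}_{A}=e^{A}_{1}$ and the region where $\operatorname{BR}_{A}=e^{A}_{2}$. Player B's best response depends only on $p$, since $\mathbf{x}^{T}B$ is affine in $p$; taking the upper envelope of the $n$ affine maps $p\mapsto(\mathbf{x}^{T}B)_{j}$ partitions $[0,1]$ into finitely many subintervals $I_{1},\dots,I_{m}$, on each of which $\operatorname{BR}_{B}$ is a fixed vertex $e^{B}_{\sigma(k)}$. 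These two partitions cut $\Delta$ into finitely many cells, and on the interior of each cell the field of (\ref{equ:cbr-def}) is linear: $p$ drifts monotonically toward $0$ or $1$ according to A's current action, and $\mathbf{y}$ drifts toward the fixed vertex $e^{B}_{\sigma(k)}$.

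The crux is to prove that the orbit cannot cross cell boundaries infinitely often. Within a cell where B's target vertex $v=e^{B}_{\sigma(k)}$ is fixed, the explicit solution gives $g(\mathbf{y}(t))=g(v)+\big(g(\mathbf{y}(t_{0}))-g(v)\big)e^{-(t-t_{0})}$, which is monotone and converges to $g(v)$; hence while $p$ stays in $I_{k}$ the sign of $g$, and therefore A's action, changes at most once and then locks to the sign of $g(v)$. Consequently $p$ can reverse direction only when $p$ itself crosses one of the finitely many B-thresholds, so the whole orbit is governed by an essentially scalar, piecewise-monotone dynamics in $p$ coupled to the one-time settling of $g(\mathbf{y})$. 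I would then show this scalar system is eventually monotone: a genuine cycle would force $p$ to oscillate across a fixed B-threshold while $g(\mathbf{y})$ also re-crosses zero, which the monotonicity of $g$ along each leg forbids. I expect this exclusion of cycling to be the main obstacle, and I would make it rigorous through the improvement principle (tracking that each best-response switch strictly raises the switching player's payoff against the opponent's most recent action), which caps the number of switches and pins down a finite, acyclic itinerary of cells.

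Once cycling is ruled out, the orbit eventually remains in a single cell and converges to that cell's rest point. Any such limit is a fixed point of (\ref{equ:cbr-def}), i.e. $\mathbf{x}=\operatorname{BR}_{A}(\mathbf{y})$ and $\mathbf{y}=\operatorname{BR}_{B}(\mathbf{x})$, which by Definition \ref{def:NE} is a Nash equilibrium; convergence of CFP then follows from orbit equivalence. Finally I would obtain the discrete statement either by applying the same improvement-principle bookkeeping directly to (\ref{equ:DFP}), or by the standard stochastic-approximation link that realizes DFP as a vanishing-step perturbation of the CFP flow, so that its empirical distribution converges to the same equilibrium.
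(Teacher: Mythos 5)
Your proposal has a genuine gap, and it sits exactly at the step you yourself flag as the crux. First, note the paper never proves Lemma \ref{lem:2xn}; it imports it from \cite{bergerFictitiousPlayGames2005}, so the benchmark is Berger's proof, whose whole point is to handle orbits that switch best responses \emph{infinitely often}. Your architecture (rule out infinitely many cell-boundary crossings $\Rightarrow$ orbit settles in one cell $\Rightarrow$ converges to that cell's rest point) cannot be repaired, because the no-cycling claim is false. Take matching pennies, a $2\times2$ instance of the lemma whose unique NE is fully mixed. There the BRD/CFP orbit crosses the indifference loci infinitely often: $p$ oscillates across B's threshold and $g(\mathbf{y})$ re-crosses zero on every leg, and convergence happens not by settling into a cell but because the spiral is damped --- the oscillation amplitude contracts toward the mixed equilibrium. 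Your claimed contradiction ("monotonicity of $g$ along each leg forbids $p$ oscillating across a threshold while $g$ re-crosses zero") does not hold: monotonicity of $g$ along one leg only yields "at most one A-switch per leg," and once B switches, the target value $g(v)$ changes (typically in sign), so $g$ can re-cross zero on the very next leg, forever. Indeed, if your claim were true, every limit would be a rest point interior to a cell, i.e.\ a pure profile, so you would have "proved" that every $2\times n$ game has a pure equilibrium --- already false for $2\times2$.

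The improvement principle also does not do what you ask of it: it does not "cap the number of switches," it only constrains which action may follow which (each newly adopted action strictly improves against the opponent's currently played pure action). In Berger's actual argument it is used to show that eventually at most two of Player B's actions recur, which reduces the play to a $2\times2$-type alternation; the remaining, unavoidable work is a contraction argument for that infinite alternation --- precisely the kind of Poincar\'e-map contraction this paper carries out in the proof of Theorem \ref{thm:2-mixed}, and the kind of geometric damping argument in \cite{metrick1994fictitious} for $2\times2$ games. Two smaller issues: a mixed NE is not literally a fixed point of (\ref{equ:cbr-def}) with pure-valued $\operatorname{BR}$, so your limit identification needs the differential-inclusion (convexified best-response) formulation; and the final CFP-to-DFP transfer via stochastic approximation is not automatic from orbitwise convergence of the flow --- it needs the attractor/chain-transitivity structure, or a separate discrete argument as in Berger's paper.
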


By Lemma \ref{lem:2xn}, when one of two players uses only $2$ pure strategies, FP would converge, which we would utilize later.

\subsection{Nonconvergent result}
People may wish FP converge to NE in more games. However Shapley's counterexample shatters this hope by giving a simple $ 3\times 3 $ game as 
\begin{equation}\label{equ:Shpley-payoff}
	A=\begin{bmatrix}
		1     & 0     & 0 \\[1.5ex]
		0     & 1     & 0     \\[1.5ex]
		0     & 0     & 1
	\end{bmatrix},\quad 
	B = \begin{bmatrix}
		0      & 1      & 0      \\[1.5ex]
		0      & 0      & 1      \\[1.5ex]
		1      & 0      & 0
	\end{bmatrix}.
\end{equation}
The unique equilibrium of game (\ref{equ:Shpley-payoff}) is $ x=(\frac{1}{3},\frac{1}{3},\frac{1}{3})^{T},y=(\frac{1}{3},\frac{1}{3},\frac{1}{3})^{T} $. Then Shapley proved that FP would not converge for this game. Recently, it was found that FP would converge to a limit cycle \cite{sparrow2008fictitious,van2011fictitious}. 

For more general $ 3\times 3 $ games, the trajectory of FP would be much more complicated. \cite{ostrovski2014payoff} gives an example, whose payoff matrices are
\begin{equation}\label{equ:8-game}
	A = \begin{bmatrix}
		-1.35 & -1.26 & 2.57\\[1.5ex]
		0.16 & -1.80 & 1.58\\[1.5ex]
		-0.49 & -1.54 & 1.9
	\end{bmatrix}, B = \begin{bmatrix}
		-1.83 & -2.87 & -3.36\\[1.5ex]
		-4.80 & -3.85 & -3.75\\[1.5ex]
		6.740 & 6.59 & 6.89
	\end{bmatrix}.
\end{equation}
And its unique NE is $ x=(0.288, 0.370, 0.342)^{T} $,
$ y=(0.335, 0.327, 0.338)^{T} $. Figure \ref{fig:shapley-8} shows the dynamics in games (\ref{equ:Shpley-payoff}) and (\ref{equ:8-game}). 

\begin{figure}[htbp]
	\centering
	\includegraphics[width=0.8\linewidth]{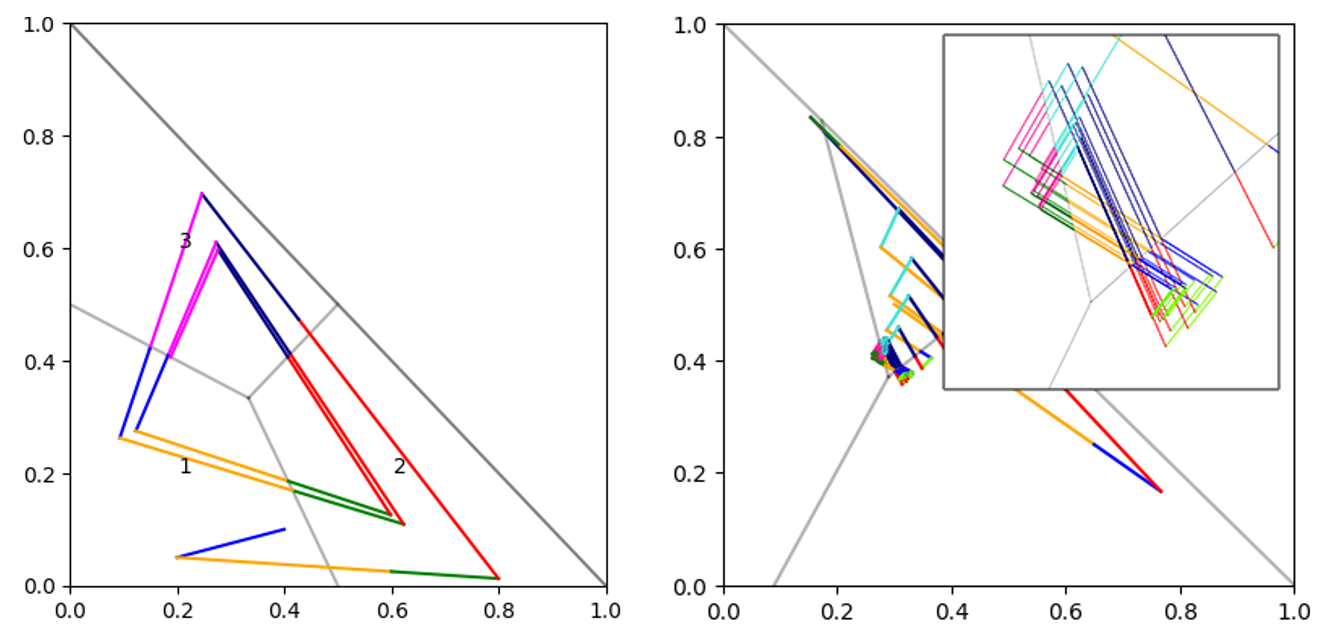}
	\caption{Dynamics of FP in Game (\ref{equ:Shpley-payoff}) and (\ref{equ:8-game}): We only show the evolution in $ \Delta_{A} $. The left figure is for Shapley game (\ref{equ:Shpley-payoff}) and the right figure is for game (\ref{equ:8-game}). Different colored lines represent different stage action profiles $ (a(t),b(t)) $ on the trajectory.}
	\label{fig:shapley-8}
\end{figure}

Although the dynamics in Figure \ref{fig:shapley-8} are quite different, note that both games (\ref{equ:Shpley-payoff}) and (\ref{equ:8-game}) have fully mixed NE. Hence a natural question arises: if FP does not converge, is it necessary that the NE has full support? And our main results partly answer this question. We will prove that when both players do not have fully mixed NE strategy, then FP converges to NE.

\section{Main Results and Methods}\label{sec3:result}
\subsection{Indifferent Point and Main Results}

Now we provide some basic notions used in our analysis. Since all the dynamics above (\ref{equ:DFP}, \ref{equ:cfp-def}, \ref{equ:cbr-def}) are driven by best response, it is important to figure out how it takes value.

\begin{definition}[\cite{ostrovski2014payoff}]\label{def:best-polygon}
	The best response region for Player A's $ i $-th action is the set 
	\begin{equation}\label{equ:best-res-A}
		Z^{A}_i=\{\mathbf{y}\in\Delta_{B}\mid (A \mathbf{y})_{i}\geq (A\mathbf{y})_{k}, \forall k=1,2,3\}.
	\end{equation}
	And the best response region for Player B's $ j $-th action is the set
	\begin{equation}\label{equ:best-res-B}
		Z^{B}_j=\{\mathbf{x}\in\Delta_{A}\mid (\mathbf{x}^{T}B)_{j}\geq (\mathbf{x}^{T} B)_{k}, \forall k=1,2,3\}.
	\end{equation}
\end{definition}

In a game, if some best response region $ Z^{k}_{i} $ is of measure zero, then the $ i $-th action is weakly dominated, and this game can be reduced to a lower-dimension game. Then according to Lemma \ref{lem:2xn}, it is trivial that FP converges. Hence in this paper, we suppose that all the best response regions have positive measure.

If the initial point $(\mathbf{x}(t_{0}),\mathbf{y}(t_{0}))$ belongs to $\operatorname{int}(Z^{B}_{i})\times\operatorname{int}(Z^{A}_{j})$, then $ \operatorname{BR}_{A}(\mathbf{y}(t)) $ and $ \operatorname{BR}_{B}(\mathbf{x}(t)) $ are constants until the trajectory leaves the $\operatorname{int}(Z^{B}_{i})$ or $\operatorname{int}(Z^{A}_{j})$. Then on this time interval, we can solve the CFP (\ref{equ:cfp-def}) with initial value $ \big(t_{0},(\mathbf{x}(t_{0}),\mathbf{y}(t_{0}))\big) $ as
$$
\left\{
\begin{array}{lr}
	\mathbf{x}(t) = (1-\frac{t_{0}}{t})\operatorname{BR}(\mathbf{y}(t))+\frac{t_{0}}{t}\mathbf{x}(t_{0}),\\
	\mathbf{y}(t) = (1-\frac{t_{0}}{t})\operatorname{BR}(\mathbf{x}(t))+\frac{t_{0}}{t}\mathbf{y}(t_{0}).
\end{array}
\right.
$$  
and BRD (\ref{equ:cbr-def}) as
$$
\left\{
\begin{array}{lr}
	\mathbf{x}(t) = (1-e^{t_{0}-t}) \operatorname{BR}(\mathbf{y}(t))+e^{t_{0}-t}\mathbf{x}(t_{0}),\\
	\mathbf{y}(t) = (1-e^{t_{0}-t}) \operatorname{BR}(\mathbf{x}(t))+e^{t_{0}-t}\mathbf{y}(t_{0}).
\end{array}
\right.
$$ 
We can see that both equations are linear combinations of two points, that is, $(\mathbf{x}(t),\mathbf{y}(t))=(1-s)a+s\cdot b$, $a=(\operatorname{BR}(\mathbf{y}),\operatorname{BR}(\mathbf{x}))$, $b=(x(t_{0}),y(t_{0}))$; for CFP, $s=\frac{t_{0}}{t}$, while for BRD, $s=e^{t_{0}-t}$. Hence, geometrically speaking, the only difference between CFP and BRD is their velocity, not the shape of trajectory. With an identical initial point, the trajectories of CFP and BRD would have the same direction and exit from the best-response region at the same place. 

On the boundary of the best response region, some of the inequalities in (\ref{equ:best-res-A}) are equal, i.e., there are more than one action being the best response to certain strategies. Such strategies form a segment, which is called an indifferent line and is defined formally as follows. 

\begin{definition}\label{def:indifferent_line}
	The indifferent lines of Player A and B are defined as
	\begin{equation}\label{equ:def_indif_line}
		\begin{aligned}
			l^{A}_{jk}&:=\{\mathbf{y}\in\Delta_{B}\mid (A\mathbf{y})_{j}=(A\mathbf{y})_{k}>(A\mathbf{y})_{i},\ \forall i\neq j,k\},\\
			l^{B}_{jk}&:=\{\mathbf{x}\in\Delta_{A}\mid (\mathbf{x}^{T}B)_{j}=(\mathbf{x}^{T}B)_{k}>(\mathbf{x}^{T}B)_{i},\ \forall i\neq j,k\}.
		\end{aligned}
	\end{equation}
\end{definition}

In the following we give an important lemma which states the existence and uniqueness of a special point on the generalized strategy space for any $ n\times n $ games.

\begin{lemma}\label{thm:existence-of-indiff-point}
	\begin{enumerate}
		\item For almost all $ n\times n $ normal form games $ (A,B) $, there exists an unique point $(\bar{\mathbf{x}},\bar{\mathbf{y}}) \in\{(\mathbf{x},\mathbf{y})\in\mathbb{R}^{2n}\mid\sum_{i=1}^{n}x_{i}=1,\sum_{i=1}^{n}y_{i}=1\}$ such that
		\[ \begin{aligned}
			\mathbf{x}^{T}A\bar{\mathbf{y}}=(\mathbf{x}^{\prime})^{T}A\bar{\mathbf{y}},\quad \forall \mathbf{x}, \mathbf{x}^{\prime}\in\Delta_{A},\\
			\bar{\mathbf{x}}^{T}A\mathbf{y}=\bar{\mathbf{x}}^{T}A\mathbf{y}^{\prime},\quad \forall \mathbf{y}, \mathbf{y}^{\prime}\in\Delta_{B}.
		\end{aligned}  \]
		\item Moreover, for $3\times 3$ games, if $ (\bar{\mathbf{x}},\bar{\mathbf{y}}) $ does not exist, the indifferent lines are parallel to each other.
	\end{enumerate}
\end{lemma}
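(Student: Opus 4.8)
The plan is to exploit the fact that the two defining conditions decouple: the equation for $\bar{\mathbf{y}}$ involves only $A$ and the equation for $\bar{\mathbf{x}}$ involves only $B$, so by symmetry it suffices to analyse $\bar{\mathbf{y}}$ (note that for this reading the second displayed identity should be $\bar{\mathbf{x}}^{T}B\mathbf{y}=\bar{\mathbf{x}}^{T}B\mathbf{y}'$, so that it encodes Player B's indifference). First I would observe that, since $\mathbf{x}\mapsto\mathbf{x}^{T}A\bar{\mathbf{y}}$ is linear, the condition $\mathbf{x}^{T}A\bar{\mathbf{y}}=(\mathbf{x}')^{T}A\bar{\mathbf{y}}$ for all $\mathbf{x},\mathbf{x}'\in\Delta_{A}$ is equivalent, by evaluation at the vertices $e^{A}_{i}$, to $(A\bar{\mathbf{y}})_{1}=\cdots=(A\bar{\mathbf{y}})_{n}$, i.e. $A\bar{\mathbf{y}}=\lambda\mathbf{1}$ for some scalar $\lambda$, where $\mathbf{1}$ is the all-ones vector. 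Subtracting the first row from the remaining ones converts this into the $n-1$ homogeneous equations $\sum_{k}(A_{ik}-A_{1k})\bar{y}_{k}=0$, $i=2,\dots,n$, which together with $\sum_{k}\bar{y}_{k}=1$ form a square system $\tilde{M}_{A}\bar{\mathbf{y}}=(0,\dots,0,1)^{T}$, where $\tilde{M}_{A}$ stacks the $n-1$ row-difference vectors on top of $\mathbf{1}^{T}$.

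Existence and uniqueness then reduce to invertibility of $\tilde{M}_{A}$. Here $\det\tilde{M}_{A}$ is a polynomial in the entries of $A$, and it is not identically zero: for $A$ equal to the identity the system yields the unique solution $\bar{\mathbf{y}}=\tfrac{1}{n}\mathbf{1}$. The zero set of a nonzero polynomial has Lebesgue measure zero, so $\tilde{M}_{A}$ is invertible for almost all $A$; the same reasoning gives a measure-zero exceptional set for $\bar{\mathbf{x}}$, and the union of two null sets is null. This establishes part 1.

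For part 2, take $n=3$, so that the three equalities $(A\mathbf{y})_{j}=(A\mathbf{y})_{k}$ carve out three affine lines $L_{12},L_{13},L_{23}$ in the plane $P=\{\mathbf{y}\mid\sum_{i}y_{i}=1\}$, of which the indifferent lines $l^{A}_{jk}$ are sub-segments, and $\bar{\mathbf{y}}$ is precisely their common intersection. The key point is transitivity of equality: if two of the lines meet at a point $p$, then two payoff coordinates coincide there and both equal a third, forcing the remaining equality, so $p$ lies on the third line and the three are concurrent, whence $\bar{\mathbf{y}}$ exists. Contrapositively, if $\bar{\mathbf{y}}$ does not exist then no two of the lines intersect, so in $P$ they are pairwise parallel; the relation $\mathbf{n}_{13}=\mathbf{n}_{12}+\mathbf{n}_{23}$ among the row-difference normals $\mathbf{n}_{jk}=(A_{jm}-A_{km})_{m}$ independently rules out an ``exactly two parallel'' configuration, leaving only the all-parallel case. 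The same argument for $\bar{\mathbf{x}}$ with $B$'s lines finishes part 2.

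The linear algebra and the measure-zero bookkeeping are routine; the step needing the most care is part 2, specifically excluding the degenerate configurations in which two of the lines coincide (rather than being parallel) or collapse to the empty set or all of $P$. These arise exactly when some row-difference of $A$ is proportional to $\mathbf{1}$, a non-generic event that either turns the indifferent point into a continuum or is ruled out by the standing assumption that every best response region has positive measure. Pinning down these boundary cases, rather than the clean concurrent-versus-parallel dichotomy, is where the genuine work lies.
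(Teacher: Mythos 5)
Your proposal is correct, and its two halves relate to the paper differently. For Part 1 you follow essentially the paper's own route: the paper likewise reduces the indifference condition to a square $n\times n$ linear system, the only cosmetic difference being that it eliminates $y_{n}$ and treats the common payoff value $c_{1}$ as an extra unknown, whereas you keep all $n$ coordinates and append the normalization row $\mathbf{1}^{T}$; both arguments then rest on generic non-vanishing of the resulting determinant (your explicit check at $A=I$ that this determinant polynomial is not identically zero is a detail the paper leaves implicit, and your reading of the second displayed identity with $B$ in place of $A$ matches what the paper actually uses in its appendix). For Part 2, however, your route is genuinely different. The paper works algebraically: from $\det(\tilde{A})=0$ it extracts a linear dependence among the rows of $\tilde{A}$ and then computes the slopes of $l^{A}_{12}$, $l^{A}_{23}$, $l^{A}_{13}$ explicitly, finding them equal. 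You argue synthetically: transitivity of the equalities $(A\mathbf{y})_{j}=(A\mathbf{y})_{k}$ forces any intersection point of two of the full lines $L_{jk}$ to lie on the third, so nonexistence of $\bar{\mathbf{y}}$ precludes any pairwise intersection, and non-intersecting lines in a plane are parallel. Your argument is cleaner, coordinate-free, and avoids the paper's division by possibly vanishing denominators in the slope computation; on the other hand, the paper's computation establishes parallelism under the weaker hypothesis $\det(\tilde{A})=0$, which also covers the case where $\bar{\mathbf{y}}$ exists but fails to be unique, whereas your concurrency argument uses nonexistence per se. Both proofs share the same loose end, namely the degenerate configurations where some $L_{jk}$ is empty, coincides with another, or equals all of $P$ (a row difference proportional to $\mathbf{1}$); you flag these explicitly and correctly note they are excluded by the standing positive-measure assumption on best response regions or by nondegeneracy, while the paper passes over them in silence.
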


The points $\bar{\mathbf{x}}, \bar{\mathbf{y}}$ in Lemma \ref{thm:existence-of-indiff-point} are called \emph{indifferent points}, at which the probabilities of some pure strategies can be negative, hence they can be within or outside the strategy simplex $\Delta_A, \Delta_B$. If $\bar{\mathbf{x}}\in\Delta_A\ (\bar{\mathbf{y}}\in \Delta_B) $, then we call it an Internal Indifferent Point (IIP). Based on Definition \ref{def:NE}, it is a NE if both $\bar{\mathbf{x}}$ and $\bar{\mathbf{y}}$ are IIPs. However, since FP may not converge to it as shown in Figure \ref{fig:shapley-8}, we distinguish the games into two classes.

\begin{definition}
	A game is called to be
	\begin{enumerate}
		\item  with IIP, if $ \bar{\mathbf{x}}\in \Delta_A,$ or $\bar{\mathbf{y}}\in\Delta_B $
		\item without IIP, if $ \bar{\mathbf{x}}\notin \Delta_A$ and $ \bar{\mathbf{y}}\notin\Delta_B $, including the case in which $\bar{\mathbf{x}}$ or $\bar{\mathbf{y}}$ does not exist.
	\end{enumerate}
\end{definition}

The proof of Theorem \ref{thm:existence-of-indiff-point} is based on the theory of linear systems of equations and can be found in \ref{app:existence-of-indiff-point}.
By the simulation in \ref{app:proportion}, by randomly sampling in the game space, it is easy to get a game without IIP, i.e., our class exists widely.

Now we can state our main results.

\begin{theorem}\label{thm:main-result}
Every CFP approaches equilibrium in every $3\times 3$ game without IIP.
\end{theorem}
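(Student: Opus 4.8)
The plan is to analyze the Best Response Dynamic (BRD) rather than CFP directly: as observed after \eqref{equ:cbr-def}, the two flows differ only by a time reparametrisation and share identical orbits, so convergence of one is equivalent to convergence of the other, and BRD is piecewise-linear. The first step is to extract the combinatorial consequence of the no-IIP hypothesis. By Lemma~\ref{thm:existence-of-indiff-point}, the three indifferent lines of Player A all pass through the single indifferent point $\bar{\mathbf{y}}$ (or are mutually parallel), and likewise Player B's lines pass through $\bar{\mathbf{x}}$; since the game is without IIP we have $\bar{\mathbf{y}}\notin\Delta_B$ and $\bar{\mathbf{x}}\notin\Delta_A$. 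I would first show that this forces each simplex to be cut into \emph{layered} best-response regions with no interior triple point --- precisely the feature that is absent in Shapley's game \eqref{equ:Shpley-payoff}, where the interior triple point is the fully mixed NE around which the orbit cycles. In particular, no fully mixed NE can exist.

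The second step is the projection. Taking the external point $\bar{\mathbf{x}}$ (resp. $\bar{\mathbf{y}}$) as a centre, I would define a one-dimensional coordinate on $\Delta_A$ (resp. $\Delta_B$) --- a radial coordinate seen from $\bar{\mathbf{x}}$, or a linear functional in the parallel case --- that is monotone across the ordered best-response regions and hence records exactly which region a strategy occupies. Stacking the two coordinates gives a projection $\pi:\Delta\to\mathbb{R}^2$. I would then verify that BRD orbits descend to well-defined orbits of a planar piecewise-linear field $\dot u=F(u)$ on $\pi(\Delta)$, that the rest points of $F$ are exactly the images of the NEs, and that convergence of a planar orbit to a rest point lifts back to convergence of the original trajectory to an NE (invoking Lemma~\ref{lem:2xn} to dispose of residual motion once a player is effectively reduced to two strategies).

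With the dynamics reduced to the plane, the third step is a Poincar\'e--Bendixson analysis. Orbits remain in the compact set $\pi(\Delta)$, so each $\omega$-limit set is a rest point, a periodic orbit, or a polycycle of rest points joined by connecting arcs. The crux is to exclude recurrence. The cyclic structure of BRD is organised around the indifferent point, where all best responses switch simultaneously; since this point is external, an orbit cannot wind around it while remaining in $\pi(\Delta)$, and there is no fully mixed NE to serve as an interior focus, so no closed orbit can enclose a centre of rotation. I would make this quantitative by constructing a transversal across the switching structure and showing the induced Poincar\'e return map is strictly monotone toward an NE, which simultaneously rules out periodic orbits and polycycles and yields convergence for the unique-NE case.

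Finally, for games with several NEs I would classify equilibria through the redefined notions of \emph{saddle} and \emph{sink} NE adapted to the non-smooth field, and prove via an index/parity count --- using that nondegenerate games have an odd number of isolated NE --- that a saddle and a sink must coexist. The saddle's stable set is then a curve partitioning $\pi(\Delta)$ into two forward-invariant basins, on each of which the monotone return map drives every orbit to the corresponding sink. I expect the genuine difficulty to lie in the third and fourth steps: because $F$ is discontinuous across the indifferent lines, neither the classical Poincar\'e--Bendixson theorem nor Jacobian-based stability applies off the shelf, so the delicate work is to control the flow on the switching manifolds (ruling out sliding and trapped recurrence) and to turn ``the centre of rotation lies outside the simplex'' into a rigorous exclusion of closed orbits.
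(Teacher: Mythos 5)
Your proposal follows essentially the same route as the paper: projecting the dynamics to a planar piecewise system through the external indifferent point, proving convergence in the unique-NE case via a monotone (contractive) Poincar\'e return map across the switching structure, and handling multiple NEs by establishing coexistence of a saddle and a sink whose stable set partitions the projected space into forward-invariant basins. The only cosmetic differences are your framing via Poincar\'e--Bendixson and an index/parity count (the paper instead computes the return map explicitly by similar triangles and proves saddle existence by enumerating the geometric arrangements), but your own fallback --- the transversal with a monotone return map --- is exactly what the paper executes.
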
 

To prove Theorem \ref{thm:main-result}, we need to consider all possible scenarios of games without IIP. The scenarios are various due to the geometrical property of the game, including the location of indifferent points, the arrangements of each best response regions in $ \Delta_{A} $ and $ \Delta_{B} $. We separate the proof into two steps: the first is the game having unique NE in Section \ref{sec3-3:pure}, the second is the game having multiple NEs in Section \ref{sec3-5:other}. And We would present the basic technique used in next subsection.

\subsection{Projection Mapping}\label{sec3-2:project-function}
The basic idea in our analysis is to lower the dimension of FP dynamics from a four-dimensional product simplex $ \Delta_{A}\times\Delta_{B} $ to a plane.  First we have an observation below.

\begin{proposition}\label{pro:1}
	If $\bar{\mathbf{x}}\notin \Delta_{A}$ or $\bar{\mathbf{x}}$ does not exist, then there are at most two indifferent lines in $\Delta_{A}$.
\end{proposition}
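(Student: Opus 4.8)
The plan is to use that all three of Player~B's indifference lines in $\Delta_A$ issue from the single point $\bar{\mathbf{x}}$, and that they are angularly spread so widely that no open half-plane can contain all three; since $\bar{\mathbf{x}}\notin\Delta_A$ forces $\Delta_A$ to be seen from $\bar{\mathbf{x}}$ within an open half-plane of directions, at most two of the rays can enter $\Delta_A$. First I would record the algebra behind Lemma~\ref{thm:existence-of-indiff-point}. Writing $f_i(\mathbf{x}):=(\mathbf{x}^{T}B)_i$ for the three affine functions on the plane $\{\sum_i x_i=1\}$, each pairwise-equality set $\{f_j=f_k\}$ is a line, and the identity $(f_1-f_2)+(f_2-f_3)+(f_3-f_1)\equiv 0$ forces the three lines to be concurrent precisely at $\bar{\mathbf{x}}$ whenever it exists. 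Hence each indifferent line $l^{B}_{jk}$ of Definition~\ref{def:indifferent_line}, being the part of $\{f_j=f_k\}$ on which the shared value is largest, is a \emph{ray} emanating from $\bar{\mathbf{x}}$.

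Next I would establish the angular spread. Around $\bar{\mathbf{x}}$ each best-response region $Z^{B}_j$ is the intersection of the two half-planes $\{f_j\ge f_i\}$ through $\bar{\mathbf{x}}$, hence a convex cone whose opening angle is strictly below $180^{\circ}$ in nondegenerate position; the three cones tile the full angle around $\bar{\mathbf{x}}$, and the walls between cyclically adjacent cones are exactly the three indifferent rays. Therefore the angular gap between two adjacent rays equals one cone opening, so every gap is $<180^{\circ}$. Because three rays from a common apex lie in a common open half-plane if and only if one of their cyclic gaps exceeds $180^{\circ}$, I conclude that the three indifferent rays lie in no open half-plane.

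Then I would invoke the hypothesis. As $\Delta_A$ is compact and convex with $\bar{\mathbf{x}}\notin\Delta_A$, strict separation yields a vector $\mathbf{w}$ with $\mathbf{w}\cdot(\mathbf{z}-\bar{\mathbf{x}})>0$ for all $\mathbf{z}\in\Delta_A$, so every direction from $\bar{\mathbf{x}}$ into $\Delta_A$ sits in the open half-plane $\{\mathbf{d}:\mathbf{w}\cdot\mathbf{d}>0\}$. An indifferent line contributes to $\Delta_A$ only when its ray actually enters $\Delta_A$, i.e.\ when its direction lies in this visual half-plane; since the three rays cannot all lie in one open half-plane, at most two of them qualify, giving at most two indifferent lines in $\Delta_A$.

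Finally, for the alternative where $\bar{\mathbf{x}}$ does not exist, Lemma~\ref{thm:existence-of-indiff-point}(2) makes the lines $\{f_j=f_k\}$ parallel, so the gradients of the differences $f_j-f_k$ share one direction $\mathbf{u}$ and $\max_i f_i$ depends only on the scalar $\mathbf{u}\cdot\mathbf{x}$; as the upper envelope of three affine functions of one variable it has at most two breakpoints, i.e.\ at most two indifferent lines. The step I expect to be the main obstacle is the nondegeneracy bookkeeping in the spread argument — excluding cones of angle exactly $180^{\circ}$ or coincident difference-lines — together with making precise that ``an indifferent line in $\Delta_A$'' is counted only when the ray meets $\Delta_A$ in a segment of positive length (the open visual cone), so that tangential grazing does not inflate the count.
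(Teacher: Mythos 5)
Your proposal is correct, and it supplies considerably more than the paper does: the paper's entire justification of Proposition~\ref{pro:1} is the remark that it ``is straightforward since the best response region is convex and connected,'' with no further detail. Your route --- concurrency of the three lines $\{f_j=f_k\}$ at $\bar{\mathbf{x}}$, the observation that the three indifferent rays are the walls between the three convex best-response cones so that every cyclic gap is below $180^{\circ}$, and strict separation of the compact convex set $\Delta_A$ from $\bar{\mathbf{x}}$ --- makes that implicit geometry rigorous, and your treatment of the nonexistent-$\bar{\mathbf{x}}$ case correctly reduces, via Lemma~\ref{thm:existence-of-indiff-point}(2), to the fact that the upper envelope of three affine functions of one scalar variable has at most two breakpoints. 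The argument the paper gestures at is essentially the contrapositive of your separation step: if all three rays met $\Delta_A$, one could pick a point of $\Delta_A$ on each ray, and since the three directions positively span the plane (no cyclic gap of $180^{\circ}$ or more), $\bar{\mathbf{x}}$ would lie in the convex hull of those points and hence in $\Delta_A$, a contradiction; so the two routes rest on the same convexity geometry, but yours states the quantifiers and hypotheses explicitly. Two of the ``obstacles'' you flag are in fact already disposed of by your own construction. First, tangential grazing cannot inflate the count: a ray meeting $\Delta_A$ in even a single point forces its direction into the open half-plane $\{\mathbf{d}\mid\mathbf{w}\cdot\mathbf{d}>0\}$, so at most two rays can touch $\Delta_A$ at all, under any reading of ``indifferent line in $\Delta_A$.'' Second, a cone of opening angle exactly $180^{\circ}$ would require two of the difference-lines to coincide, making $\{f_1=f_2=f_3\}$ a whole line and contradicting the uniqueness of $\bar{\mathbf{x}}$ in Lemma~\ref{thm:existence-of-indiff-point}; and even in that boundary case two opposite ray directions cannot both have positive inner product with $\mathbf{w}$, so the bound of two survives.
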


Proposition \ref{pro:1} is straightforward since the best response region is convex and connected. And by Proposition \ref{pro:1}, we can obviously find that the two indifferent lines always intersect with one edge of $ \Delta_{A} $. For any point $ \mathbf{x}\in\Delta_{A} $, connect it to the indifferent point $ \bar{\mathbf{x}} $, then we get the intersection point $ \tilde{\mathbf{x}} $ of the line $ \overline{\mathbf{x}\bar{\mathbf{x}}} $ and the edge (Figure \ref{fig:project-map} shows more details). 

Now we can define the \textit{projection mapping} 
$$ \varphi_{A}:\Delta_{A}\to[0,1], \mathbf{x}\mapsto \tilde{\mathbf{x}}. $$ 
$ \varphi_{B} $ can be defined similarly. We underline that the product mapping is what we need, i.e.,
$$ \varphi = \varphi_{A}\times\varphi_{B}:\Delta_{A}\times\Delta_{B}\to[0,1]\times[0,1]=:\tilde{\Delta}. $$

\begin{figure}[htbp]
	\centering
	\includegraphics[width=0.8\linewidth]{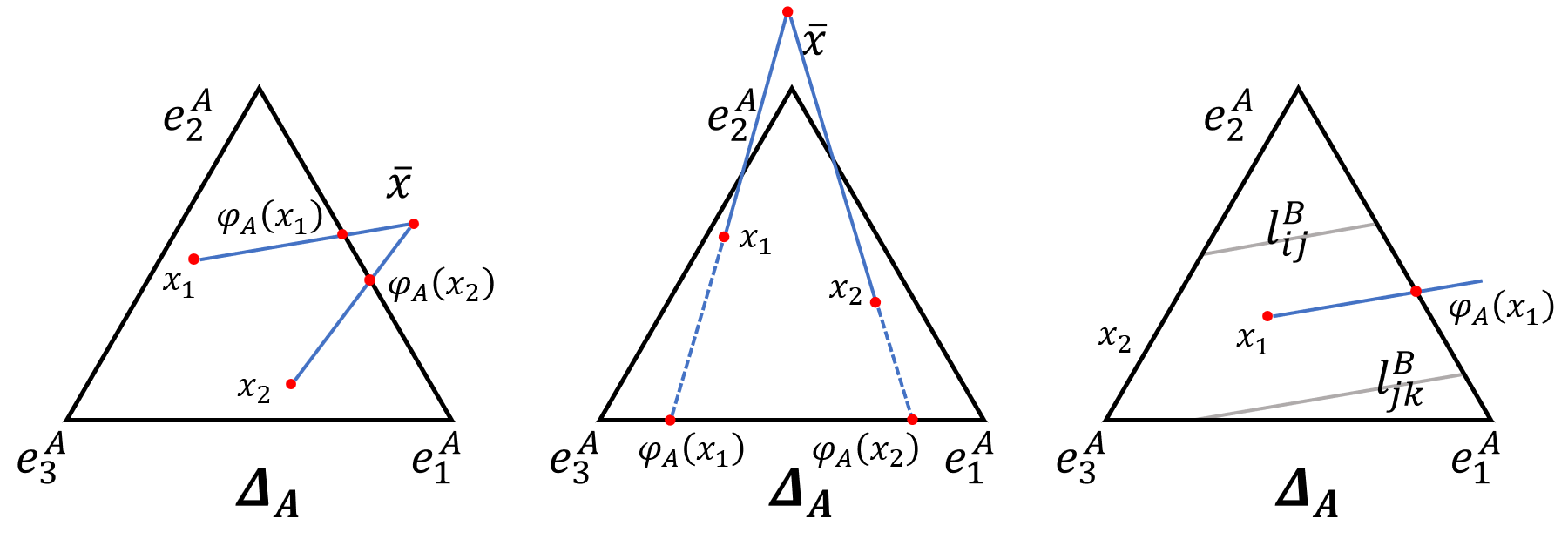}
	\caption{Three scenarios of $\varphi_{A}$: When $ \bar{\mathbf{x}} $ lies near an edge of $ \Delta_{A} $, directly connecting each point with $ \bar{\mathbf{x}} $ and the projection mapping $ \varphi_{A} $ can be well-defined. This case is shown by the first subgraph. When $ \bar{\mathbf{x}} $ lies near a vertex of $ \Delta_{A} $ as in the second subgraph, we need the extended line connecting each point with $ \bar{\mathbf{x}} $ in order to project all the points in $ \Delta_{A} $ onto the same edge. The third subgraph shows the case that $ \bar{\mathbf{x}} $ does not exist, then the projection should be parallel to the indifferent lines.}
	\label{fig:project-map}
\end{figure}

If $ \bar{\mathbf{x}} $ does not exist, by Theorem \ref{thm:existence-of-indiff-point}, the two indifferent lines are parallel to each other. Hence for any point $ x\in\Delta_{A} $, we could draw a line parallel to those indifferent lines. And we define $ \varphi_{A} $ to be the intersection point of this line and the edge of $ \Delta_{A} $. See the third subgraph of Figure \ref{fig:project-map}. 

Without loss of generality, below we assume that by $ \varphi_{A} $, all points in $ \Delta_{A} $ ($ \Delta_{B} $) are projected onto edge $ e_{1}^{A}e_{2}^{A} $ (edge $ e_{2}^{B}e_{3}^{B} $). And this does not affect our proof. 

Under $ \varphi $, the image of indifferent line $ l^{A}_{ij} $ would be one single point, while the image of best response region would be the segment between those points. For convenience, we denote $\varphi_{A}(l^{B}_{jk})=\tilde{l}^{B}_{jk}, \varphi_{A}({Z}_{j}^{B})= \tilde{Z}_{j}^{B},\ j,k=1,2,3 $. Furthermore, the $ \tilde{\Delta} $ would be divide into $ 9 $ cells, which is the product $ \tilde{Z}_{j}^{B}\times \tilde{Z}_{k}^{A} $, corresponding to different best response. For convenience, we label each cell with Roman numerals from \uppercase\expandafter{\romannumeral1}  to \uppercase\expandafter{\romannumeral9}. Figure \ref{fig:from-triangle-to-square} gives a clearer look of the projection mapping $ \varphi $. 

What we need to clarify next is the image of NE in $ \tilde{\Delta} $ under $ \varphi $. The image of a pure NE $ ({e}^{A}_{i},{e}^{B}_{j}) $ satisfies $(\tilde{e}^{A}_{i},\tilde{e}^{B}_{j})\in \tilde{Z}^{B}_{j}\times\tilde{Z}^{A}_{i}$. As for mixed NE, their location are summarized by Proposition \ref{pro:loc-NE}.

\begin{figure}[htbp]
	\centering
	\includegraphics[width=0.9\linewidth]{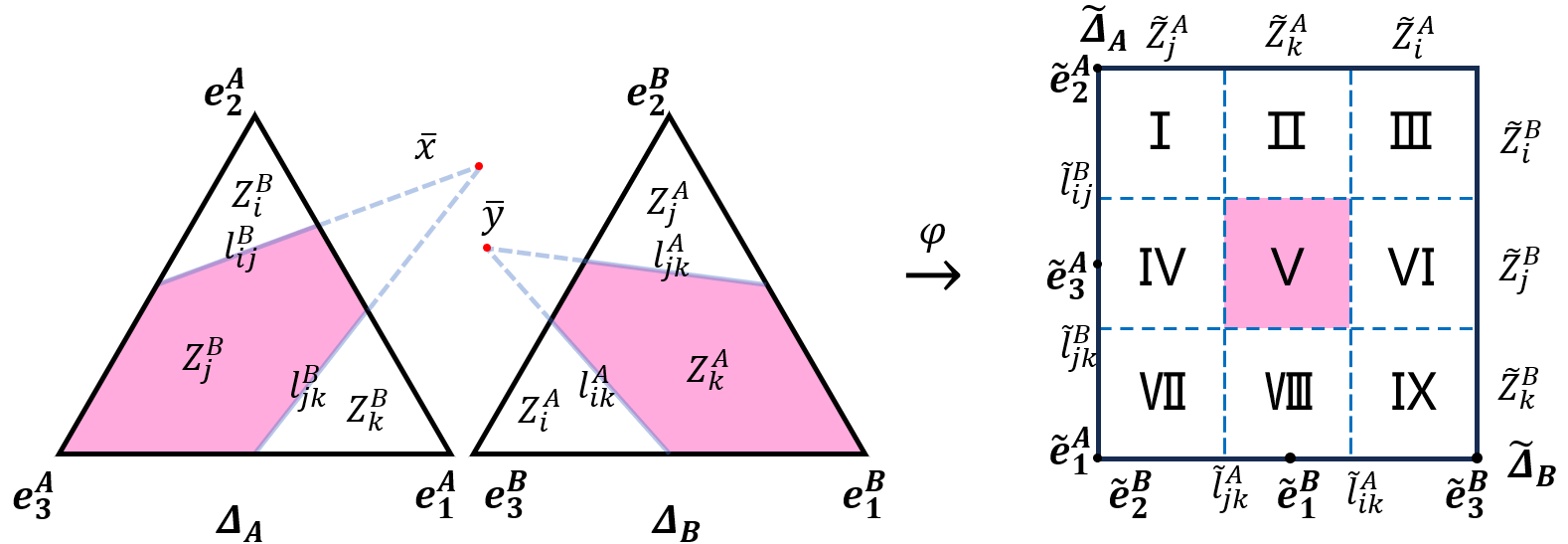}
	\caption{An example of projection mapping $ \varphi $: the whole strategy simplex $ \Delta_{A} $ is projected onto the edge $ e_{1}^{A}e_{2}^{A} $, and $ \Delta_{B} $ is projected onto the edge $ e_{2}^{B}e_{3}^{B} $, corresponding to $ \tilde{e}_{1}^{A}\tilde{e}_{2}^{A} $ and $ \tilde{e}_{2}^{B}\tilde{e}_{3}^{B} $ on the square $\tilde{\Delta}$. The vertex $ e^{A}_{3} $ is projected to a point $ \tilde{e}_{3}^{A} $ lying in the interior of $ \tilde{e}_{1}^{A}\tilde{e}_{2}^{A} $. In $ \Delta_{A} $ and $ \Delta_{B} $, the indifferent lines $ l^{A}_{jk}, l^{B}_{ij} $ are colored by blue, and they are projected into points $ \tilde{l}^{A}_{jk}, \tilde{l}^{B}_{ij} $ respectively on the edge of $\tilde{\Delta}$. Then the best response region $Z^{B}_{k}$ is projected onto a line segment $\tilde{Z^{B}_{k}}\subseteq  \tilde{e}_{1}^{A}\tilde{e}_{2}^{A}$. And the product of the best response region ${Z}^{B}_{j}\times{Z}^{A}_{k}$ is projected into a cell in $ \tilde{\Delta} $, and all of them are labeled from \uppercase\expandafter{\romannumeral1} to \uppercase\expandafter{\romannumeral9} respectively.}
	\label{fig:from-triangle-to-square}
\end{figure}

\begin{proposition}\label{pro:loc-NE}
	For the nondegenerate game $ (A,B) $ without IIP, under $ \varphi $, the image of its mixed NE must be one of four vertices of Cell \uppercase\expandafter{\romannumeral5}.
\end{proposition}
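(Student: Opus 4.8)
The plan is to pin down the support of any mixed Nash equilibrium first, and then to read off its image under $\varphi$ directly from the Equalization Principle (Lemma~\ref{lem:indifferent-of-support}) together with the fact that each indifferent line collapses to a single point under the projection. The key observation I would establish is that in a nondegenerate game without IIP, every mixed NE has support of size exactly $2\times 2$.

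To see this, first note that a fully mixed strategy for Player A (support of size $3$) would, by Lemma~\ref{lem:indifferent-of-support}, make all three of Player A's actions equally good against $\mathbf{y}$, forcing $\mathbf{y}=\bar{\mathbf{y}}$ to be an internal indifferent point; since the game is without IIP this is impossible, and symmetrically Player B cannot use full support. Next, nondegeneracy (in the sense of Remark~\ref{rmk:degenerate}, a unique best response to every pure strategy) rules out a mixing player facing a pure opponent: if Player B played a pure strategy $e_k^B$, then $\operatorname{BR}_A(e_k^B)$ would be a single action and Player A could not be indifferent between two actions. Hence if either player mixes, so must the other, and the only surviving possibility is support of size $2\times 2$.

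So suppose the mixed NE is $(\mathbf{x},\mathbf{y})$ with $\operatorname{supp}(\mathbf{x})=\{e_i^A,e_j^A\}$ and $\operatorname{supp}(\mathbf{y})=\{e_k^B,e_l^B\}$. By the Equalization Principle, Player A is indifferent between actions $i$ and $j$ while they strictly beat the third action (strictness again because equality of all three would produce an IIP), so $\mathbf{y}\in l_{ij}^A$; symmetrically $\mathbf{x}\in l_{kl}^B$. Because every indifferent line lies on the line through the indifferent point $\bar{\mathbf{x}}$, or is parallel to the projection direction when $\bar{\mathbf{x}}$ does not exist (Lemma~\ref{thm:existence-of-indiff-point}), the projection $\varphi_A$ collapses $l_{kl}^B$ to the single point $\tilde{l}_{kl}^B$, and likewise $\varphi_B$ collapses $l_{ij}^A$ to $\tilde{l}_{ij}^A$. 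Thus $\varphi(\mathbf{x},\mathbf{y})=(\tilde{l}_{kl}^B,\tilde{l}_{ij}^A)$ is exactly the crossing of a vertical divider $\tilde{l}_{kl}^B$ with a horizontal divider $\tilde{l}_{ij}^A$ of the grid on $\tilde{\Delta}$. Since all best response regions have positive measure, $\Delta_A$ and $\Delta_B$ each carry exactly two genuine indifferent lines (Proposition~\ref{pro:1}), producing exactly two vertical and two horizontal dividers; the four points where these cross are precisely the four vertices of the central cell~\uppercase\expandafter{\romannumeral5}, which proves the claim.

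The main obstacle is the support-reduction step: the whole conclusion hinges on excluding $3\times 3$ and $2\times 1$ (or $1\times 2$) supports, and this is where both standing hypotheses become indispensable — the absence of IIP kills full support, while nondegeneracy kills a lone mixer. Once the $2\times 2$ structure is secured, the rest is a direct translation of the Equalization Principle through $\varphi$. I would also carry out the bookkeeping that the particular lines $l_{ij}^A,l_{kl}^B$ arising at a genuine NE are among the two that actually bound best response regions, so that they project to interior dividers rather than to the endpoints of the edge; this holds because at the equilibrium $\mathbf{y}$ truly lies on $l_{ij}^A$ with both actions best, so the line is nonempty and separates two positive-measure regions.
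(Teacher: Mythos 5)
Your proposal is correct and takes essentially the same route as the paper's own proof: you rule out supports of size three using the absence of IIP, exclude a pure-versus-mixing profile using nondegeneracy (unique best response to pure strategies), and then project the resulting $2\times 2$-support equilibrium to the crossing of the two indifferent-line images, which is a vertex of Cell \uppercase\expandafter{\romannumeral5}. Your extra bookkeeping on strictness in the equalization step and on the projected lines being interior dividers is a minor refinement of details the paper leaves implicit.
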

\begin{proof}
	Since $ \bar{\mathbf{x}}\notin\Delta_{A}, \bar{\mathbf{y}}\notin\Delta_{B} $, no mixed strategy can make opponent's three pure strategies to have the same payoff, i.e., there does not exist NE with support size $ 3\times3 $ or $ 2\times 3 $ according to Lemma \ref{lem:indifferent-of-support}. If a NE $ (e^{A}_{k},\mathbf{y}) $ has support size $ 1\times 2 $, then there are $ 2 $ pure strategies of Player B that is both best response against $ e^{A}_{k} $, which contradicts to non-degeneration, see Remark \ref{rmk:degenerate}. Hence the mixed NE can only have support size $ 2\times 2 $.
	
	For a NE $ (\mathbf{x},\mathbf{y}) $ with support size $ 2\times 2 $, suppose $ i,j\in\operatorname{supp}(\mathbf{x}) $, then according to Lemma \ref{lem:indifferent-of-support}, $ \mathbf{y}\in l^{A}_{ij} $. Similarly, suppose $ i^{\prime},j^{\prime}\in\operatorname{supp}(\mathbf{y}) $, then $ \mathbf{x}\in l^{A}_{i^{\prime}j^{\prime}} $. Hence under mapping $ \varphi $, $ \varphi(\mathbf{x},\mathbf{y})=(\tilde{l}^{B}_{ij},\tilde{l}^{A}_{i^{\prime}j^{\prime}}) $, which is a vertex of Cell \uppercase\expandafter{\romannumeral5}.
\end{proof}

Finally, we turn to the dynamics under mapping $ \varphi $. We can write down PBRD, the projected BRD, as 
\begin{equation}\label{prf1:equ1}
	\left\{
	\begin{array}{lr}
		\dot{\tilde{x}} = \tilde{e}_{k}^{A}-\tilde{x}(t),\\
		\dot{\tilde{y}} = \tilde{e}_{j}^{B}-\tilde{y}(t).
	\end{array}
	\right.\text{for }(\tilde{x},\tilde{y})\in \tilde{Z}^{B}_{j}\times\tilde{Z}^{A}_{k}, j,k=1,2,3.
\end{equation}

According to (\ref{prf1:equ1}), in Figure \ref{fig:from-triangle-to-square}, starting from each cell $\tilde{Z}^{B}_{j}\times\tilde{Z}^{A}_{k}$ in $\tilde{\Delta}$, the trajectory will move toward a point $\tilde{e}_{k}^{A}\times\tilde{e}_{j}^{B}$. Hence we can draw the vector field in each cell, indicating the direction of the PBRD trajectory, see Figure \ref{fig:provemixed}.

Proposition \ref{pro1:project=origin} below states that the convergence of BRD can be reduced to that of PBRD.
\begin{proposition}\label{pro1:project=origin}
	\begin{enumerate}
		\item $ \varphi(\mathbf{x},\mathbf{y})\in\tilde{Z}^{B}_{j}\times\tilde{Z}^{A}_{k} $, if and only if $ (\mathbf{x},\mathbf{y})\in {Z}^{B}_{j}\times{Z}^{B}_{k} $.
		\item If PBRD converges to the image of a certain NE under mapping $ \varphi $, then BRD converges to that NE.
	\end{enumerate}
\end{proposition}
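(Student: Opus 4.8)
The plan is to establish the two items in sequence, with item~1 furnishing the combinatorial dictionary that item~2 relies on. For item~1 (where the right-hand side should read $Z_j^B\times Z_k^A$), I would use the fact that, since $\bar{\mathbf{x}}\notin\Delta_A$ or $\bar{\mathbf{x}}$ is absent, all of Player~B's indifferent lines $l_{jk}^B$ either emanate from the common point $\bar{\mathbf{x}}$ or, by Theorem~\ref{thm:existence-of-indiff-point}(2), are mutually parallel. Thus the best-response regions $Z_j^B$ partition $\Delta_A$ into angular sectors about $\bar{\mathbf{x}}$ (respectively parallel strips), whose separating rays are exactly the $l_{jk}^B$; by Proposition~\ref{pro:1} at most two such rays meet $\Delta_A$. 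Since $\varphi_A$ is precisely the central projection from $\bar{\mathbf{x}}$ onto the fixed edge (the parallel projection in the degenerate case), it carries each sector onto a subinterval of the edge, is monotone along the edge, and sends each $l_{jk}^B$ to the endpoint $\tilde l_{jk}^B$ separating the corresponding subintervals. Hence $\mathbf{x}\in Z_j^B$ iff $\varphi_A(\mathbf{x})\in\tilde Z_j^B$, and symmetrically $\mathbf{y}\in Z_k^A$ iff $\varphi_B(\mathbf{y})\in\tilde Z_k^A$; taking the product mapping gives the equivalence.

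For item~2, the central point is that $\varphi$ is an orbit equivalence between BRD and PBRD up to reparametrization of time. Inside a cell $Z_j^B\times Z_k^A$ the best responses are constant, so by (\ref{equ:cbr-def}) BRD moves along the straight segment from the current state toward the vertex $(e_k^A,e_j^B)$; as central projection maps segments to segments, its $\varphi$-image is the oriented segment from $\varphi(\mathbf{x},\mathbf{y})$ toward $(\tilde e_k^A,\tilde e_j^B)$, which is exactly the PBRD orbit prescribed by (\ref{prf1:equ1}). Item~1 guarantees that the two systems occupy corresponding cells and cross the shared cell boundaries at corresponding points, so the projected BRD orbit and the PBRD orbit trace one and the same oriented curve in $\tilde\Delta$, differing only in speed. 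Since both run for all forward time, convergence of PBRD to $\varphi(\mathrm{NE})$ forces $\varphi(\mathbf{x}(t),\mathbf{y}(t))\to\varphi(\mathrm{NE})$.

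It remains to lift this to $(\mathbf{x}(t),\mathbf{y}(t))\to\mathrm{NE}$, and this is where the real work lies, because $\varphi$ collapses a dimension and its fiber over $\varphi(\mathrm{NE})$ is not a point. I would treat the generic $2\times2$ mixed NE with supports $\{i,j\}$ and $\{i',j'\}$; by Proposition~\ref{pro:loc-NE} its image is a vertex of Cell~V, and by Lemma~\ref{lem:indifferent-of-support} the fiber over it lies on the lines carrying $l_{i'j'}^B$ and $l_{ij}^A$. The geometric key is that all four cells meeting at that vertex carry $\operatorname{BR}_A\in\{e_i^A,e_j^A\}$ and $\operatorname{BR}_B\in\{e_{i'}^B,e_{j'}^B\}$; hence once the orbit is trapped near the vertex (which item~2 provides), the lone off-support coordinate obeys $\dot x_{k}=-x_{k}$ and decays, so $\mathbf{x}(t)$ approaches the edge $e_i^Ae_j^A$ and $\mathbf{y}(t)$ the edge $e_{i'}^Be_{j'}^B$. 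Intersecting these with the fiber lines $l_{i'j'}^B$ and $l_{ij}^A$, transversally at a single point each, pins the limit to $\mathbf{x}^\ast=(e_i^Ae_j^A)\cap l_{i'j'}^B$ and $\mathbf{y}^\ast=(e_{i'}^Be_{j'}^B)\cap l_{ij}^A$, i.e.\ the NE. The pure-NE case is easier, since its image sits in the interior of a single cell, the best response is eventually constant, and BRD flows straight to the NE vertex (alternatively one may invoke Lemma~\ref{lem:2xn} on the effective $2\times2$ subgame). The step I expect to be most delicate is making the trapping argument rigorous across the non-smooth cell boundaries: verifying the best-response labels of the four cells around Cell~V and ruling out the orbit repeatedly escaping and re-entering in a way that would defeat the exponential decay estimate.
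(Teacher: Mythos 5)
Your item~1 is correct and simply supplies the detail behind what the paper dismisses as ``obvious'': the fibers of $\varphi_{A}$ are the lines through $\bar{\mathbf{x}}$ (parallel lines if $\bar{\mathbf{x}}$ does not exist), each indifferent line $l^{B}_{jk}$ lies on such a fiber, so every region $Z^{B}_{j}$ is a union of fibers and corresponds exactly to a subinterval of the target edge; you also rightly flag the typo ($Z^{B}_{k}$ should be $Z^{A}_{k}$). Your closing argument for item~2 is sound as well, and it is a genuinely different ending from the paper's: once the trajectory is trapped in the four cells around the image of the NE, the paper restricts to the effective $2\times 2$ game and invokes Lemma~\ref{lem:2xn}, whereas you note that in those cells the best responses stay in the supports, so the off-support coordinate obeys $\dot{x}_{k}=-x_{k}$ and decays, and any limit point is pinned as the intersection of the fiber over the vertex with the face $\{x_{k}=0\}$. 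Either ending works.

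The genuine gap is the step you yourself call the central point: the claim that $\varphi$ carries each straight BRD orbit onto the straight PBRD segment toward $(\tilde{e}^{A}_{k},\tilde{e}^{B}_{j})$ ``as central projection maps segments to segments.'' Each factor $\varphi_{A},\varphi_{B}$ does map a segment onto a segment, but with a \emph{projective} (M\"obius) reparametrization, and the two factors have different ones. Inside a cell the BRD orbit is $s\mapsto(1-s)(e^{A}_{k},e^{B}_{j})+s(\mathbf{x}_{0},\mathbf{y}_{0})$, and its $\varphi$-image has coordinates $\big(h_{A}(s),h_{B}(s)\big)$ with $h_{A},h_{B}$ two \emph{different} fractional-linear functions of $s$; eliminating $s$ shows the image is generically an arc of a hyperbola in $\tilde{\Delta}$, not a line segment. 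Equivalently, the projected dynamics is $\dot{\tilde{x}}=\lambda_{A}(t)\,(\tilde{e}^{A}_{k}-\tilde{x})$, $\dot{\tilde{y}}=\lambda_{B}(t)\,(\tilde{e}^{B}_{j}-\tilde{y})$ with positive but unequal, fiber-dependent rates, whereas PBRD (\ref{prf1:equ1}) fixes both rates to $1$. Consequently the projected BRD orbit and the PBRD orbit through the same point need not coincide: they move into the same corner componentwise, but they can exit a cell through different edges and then follow different cell itineraries, so ``one and the same oriented curve, differing only in speed'' is unjustified. If, as the paper's phrase ``PBRD, the projected BRD'' suggests, the hypothesis of item~2 refers to the $\varphi$-image of the BRD trajectory itself, then your orbit-equivalence step is unnecessary and your trapping-plus-decay argument completes the proof (this is also all the paper's own proof uses: cells correspond by item~1, and no straightness is needed). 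But if PBRD means the autonomous planar system (\ref{prf1:equ1}) started at the projected initial point --- the system actually analyzed by the Poincar\'e maps in Theorem~\ref{thm:2-mixed} --- then transferring its convergence to the projected BRD trajectory is precisely the missing content, and the segment-to-segment argument does not supply it.
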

\begin{proof}
	(1) is obvious. As for (2), according to the proof of Proposition \ref{pro:loc-NE}, the game would have only mixed NE with support size $ 2\times 2 $. If PBRD converges to it, according to Proposition \ref{pro:loc-NE}, the image of NE is one vertex of Cell \uppercase\expandafter{\romannumeral5}, then the trajectory of PBRD would eventually belongs to the $ 4 $ cells around NE. Hence in the long run, according to (1), for each player, the trajectory of BRD would only switch between $ 2 $ pure strategies, then Lemma \ref{lem:2xn} yields convergence of BRD. On the other hand, when PBRD converges to pure NE, then its trajectory would finally stay in one cell, and obviously BRD would converge.
\end{proof}

\subsection{Proof of Main Results, Part  \uppercase\expandafter{\romannumeral1}: Games with Unique NE} \label{sec3-3:pure}
Throughout this section we will prove that when a game without IIP incorporates only one NE, no matter it is pure or mixed, CFP would converge to it. During the proof, the key obstacle is that the trajectory may surround NE but never approaches to it, as Shapley game or game (\ref{equ:8-game}). But we will see that since $ \bar{\mathbf{x}}\notin\Delta_{A}, \bar{\mathbf{y}}\notin\Delta_{B} $, under the projection mapping, we are able to exclude such possibility and hence prove the convergence.

\begin{theorem}\label{thm:2-mixed}
	For any $ 3\times3 $ game without IIP which admits a unique NE,  CFP would converge to NE for any initial point, no matter the NE is pure or mixed.
\end{theorem}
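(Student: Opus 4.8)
The plan is to push everything through the projection and reduce the claim to the planar projected dynamics. By the orbit equivalence of CFP and BRD noted after \eqref{equ:cbr-def} (they differ only by a reparametrization of time and trace the same orbit), it suffices to prove that BRD converges to the NE; and by Proposition~\ref{pro1:project=origin}(2) this in turn follows once the projected dynamics PBRD converges to the image of the NE under $\varphi$. I would therefore work entirely on the square $\tilde\Delta$ with its nine cells. Inside a cell $\tilde Z^B_j\times\tilde Z^A_k$ the field \eqref{prf1:equ1} is linear with solution $\tilde x(t)=\tilde e^A_k+(\tilde x_0-\tilde e^A_k)e^{-t}$ and $\tilde y(t)=\tilde e^B_j+(\tilde y_0-\tilde e^B_j)e^{-t}$, so the orbit is a straight segment aimed at the corner $(\tilde e^A_k,\tilde e^B_j)$. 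Globally a PBRD orbit is thus a polygonal path that redirects toward a new corner each time it transversally crosses an image $\tilde l^B_{jk}$ or $\tilde l^A_{jk}$ of an indifferent line.

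I would then split into the two possible types of unique NE. If the NE is pure, say $(e^A_i,e^B_j)$, its image lies in the single cell $\tilde Z^B_j\times\tilde Z^A_i$, and the best-response conditions for a pure NE say exactly that the corner $(\tilde e^A_i,\tilde e^B_j)$ this cell flows toward lies in the interior of that same cell and coincides with the NE image. Hence this cell is forward invariant and every orbit entering it is absorbed and converges. The remaining point is that every orbit does enter it; I would establish this by tracking the polygonal path through the outer cells and excluding periodic orbits, using that the NE is the only rest point of the field.

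The substantive case is a mixed NE, whose image, by Proposition~\ref{pro:loc-NE}, is a vertex $(p,q)$ of the central cell V. This vertex separates the two rows carrying A's support actions and the two columns carrying B's support actions, and the field circulates through the four cells meeting at $(p,q)$. My plan is to set up a Poincar\'e return map on a cross-section transversal to the flow, e.g. the half of the switching segment $\{\tilde y=q\}$ issuing from $(p,q)$, and to show it is a strict contraction toward $(p,q)$. Since in each cell the orbits are straight segments through a fixed corner, each edge-to-edge transition is a projective map, so the return map is a composition of four projective maps whose contraction factor is an explicit product of ratios of distances from the four target corners to the switching lines. The hypothesis ``without IIP'' enters precisely here: $\bar{\mathbf x}\notin\Delta_A$ and $\bar{\mathbf y}\notin\Delta_B$ pin down the positions of the projected vertices $\tilde e^A_k,\tilde e^B_j$ relative to $p$ and $q$, and this is what forces the product to be strictly below $1$, ruling out the neutral cycling exhibited by Shapley's game \eqref{equ:Shpley-payoff} and by game \eqref{equ:8-game}.

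I expect the contraction estimate to be the main obstacle: one must read off, from the external indifferent point, the exact position of each target corner and verify that the composed projective return map contracts uniformly over the whole cross-section, i.e. that $(p,q)$ is a genuine spiral sink for the non-smooth field. Two auxiliary points need care along the way, namely transversality at the switching lines (so that orbits cross rather than slide and the return map is well defined) and the behaviour of orbits that begin in the outer ring of cells, which I would funnel into the four central cells in finitely many crossings before invoking the contraction; the same no-cycle input is what completes the pure-NE case above.
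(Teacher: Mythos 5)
Your skeleton matches the paper's: pass from CFP to BRD by orbit equivalence, project to PBRD via Proposition~\ref{pro1:project=origin}, observe that inside each cell orbits are straight segments aimed at a corner so that edge-to-edge transitions are projective (M\"obius) maps, and build a contractive Poincar\'e return map near the image of the NE. However, both of your key steps have genuine gaps, and in each case the step you defer as routine is the actual crux. In the mixed-NE case, the return map is \emph{not} in general a composition of four projective maps through the four cells meeting at the NE vertex: an orbit that is not already close to that vertex leaves those four cells and loops through six or even eight of the nine cells before returning to the cross-section (the paper's path I$\to$II$\to$III$\to$VI$\to$V$\to$VIII$\to$VII$\to$IV), so the contraction must be proved for these longer compositions. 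That is exactly what the paper does: it splits into 4-, 6- and 8-cell paths according to the initial distance $d_0$, writes the 8-cell return map as a composition of eight M\"obius transformations $\phi_0,\dots,\phi_7$, and shows the product of their derivatives telescopes to a bound strictly below $1$; this contraction is what forces the orbit, after finitely many revolutions, down into the 6-/4-cell regime, where Lemma~\ref{lem:2xn} finishes. Your plan to ``funnel orbits from the outer ring into the four central cells in finitely many crossings \emph{before} invoking the contraction'' inverts this logic: the funneling is not a preliminary step, it \emph{is} the contraction statement for the long return maps, and without it nothing prevents the orbit from re-entering the outer ring on every revolution forever.

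The pure-NE half has a sharper flaw: you propose to exclude periodic orbits ``using that the NE is the only rest point of the field.'' That inference is invalid for these piecewise flows: Shapley's game (\ref{equ:Shpley-payoff}) has a unique equilibrium, hence a unique rest point, yet its fictitious-play dynamics converges to a limit cycle, and even for smooth planar systems Poincar\'e--Bendixson theory permits closed orbits surrounding a unique rest point. What actually excludes cycling around the absorbing cell is, once more, a contraction estimate for the return map encircling it; this is how the paper treats the unique-pure-NE case in \ref{app:PNE} (eight M\"obius maps whose derivative product telescopes below $1$, plus the observation that once the distance is small enough one of the transformations outputs a negative value, i.e.\ the orbit falls into the absorbing cell rather than continuing around it). So your proposal chooses the right reduction and the right tools, but the two central estimates are missing, and the pure-NE argument as stated would fail.
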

\begin{proof}
Here we only prove the theorem when the NE is mixed. For the pure NE, although the method here is also feasible, \ref{app:PNE} gives another more direct proof. 

Consider all $ 3\times 3 $ games without IIP and having unique mixed NE, the location of indifferent lines and the arrangement of best response regions can have different on $ \Delta_{A}\times\Delta_{B} $, see the enumeration in \ref{app:enumeration}. Here we only prove the following typical case illustrated by Figure \ref{fig:provemixed}, and all the other cases are just similar. 

Now look at Figure \ref{fig:provemixed}, for analytical convenience, we give each point in $ [0,1]\times[0,1] $ a coordinate. $ \tilde{e}^{A}_{1}\times \tilde{e}^{B}_{2} $ is the original point $ (0,0) $,  $ \tilde{e}^{A}_{1}\times \tilde{e}^{B}_{3} $ is $ (1,0) $ and $ \tilde{e}^{A}_{3}\times \tilde{e}^{B}_{2} $ is $ (0,1) $. The width of each cell is $ p,q,r $ from left to right, $ p,q,r>0 $ and $ p+q+r=1 $, and the height of each cell is $ P,Q,R $ from up to down. The distance between $ \tilde{e}_{2}^{A} $ and $ \tilde{l}^{B}_{23} $ is $ M $, and the distance between $ \tilde{e}_{1}^{B} $ and $ \tilde{l}^{A}_{13} $ is $ m $. As a result, the only NE has a coordinate $ (p,Q+R) $, which lies in the upper left vertex of Cell \uppercase\expandafter{\romannumeral5}. 

By direct observation, starting from any initial point in any cells, at some time the trajectory would enter Cell \uppercase\expandafter{\romannumeral1} then leave it at its right edge $ \{(p,y)\mid Q+R< y\leq 1\} $, denoted by $ \mathfrak{l} $. Hence we could always assume that PBRD begins from $ \mathfrak{l} $. We denote the distance between initial point and NE by $ d_{0} $, $ 0< d_{0}\leq P $.

We note that starting from $\mathfrak{l}$ with $d_{0}$, the trajectory would follow three different kinds of path, depending on the value of $d_{0}$. To this end, when the trajectory moves into Cell \uppercase\expandafter{\romannumeral2}, since the best response is $ \tilde{e}^{A}_{1}\times \tilde{e}^{B}_{3}$, or $(1,0)$, the trajectory could enter Cell \uppercase\expandafter{\romannumeral3} or \uppercase\expandafter{\romannumeral5}. In order to enter Cell \uppercase\expandafter{\romannumeral3}, the distance $d_{1}$ of the trajectory from point $ (p+q,Q+R) $ on the left edge of Cell \uppercase\expandafter{\romannumeral3} should be greater than $0$. By solving similar triangles, we have 
\[ d_{1} = \phi_{0}(d_{0}) = \frac{r}{q+r}d_{0}-\frac{q}{q+r}(Q+R). \]
Hence to make sure $ d_{1} > 0 $, $ d_{0} $ should be large enough. Otherwise, the trajectory would enter \uppercase\expandafter{\romannumeral5}. As a result, the trajectory may follow a path \uppercase\expandafter{\romannumeral2} $\to$ \uppercase\expandafter{\romannumeral3} $\to$ \uppercase\expandafter{\romannumeral6} $\to$ \uppercase\expandafter{\romannumeral5} or directly from \uppercase\expandafter{\romannumeral2} to \uppercase\expandafter{\romannumeral5}. Similarly, when the trajectory is in Cell \uppercase\expandafter{\romannumeral5}, it can follow path \uppercase\expandafter{\romannumeral5} $\to$ \uppercase\expandafter{\romannumeral8} $\to$ \uppercase\expandafter{\romannumeral7} $\to$ \uppercase\expandafter{\romannumeral4} or \uppercase\expandafter{\romannumeral5} $\to$ \uppercase\expandafter{\romannumeral4}, still depending on different values of $d_{0}$.

In summary, the trajectory may follow three different kinds of paths as below:

\begin{enumerate}
	\item going through $8$ cells: \uppercase\expandafter{\romannumeral1}$ \to $ \uppercase\expandafter{\romannumeral2}$ \to $ \uppercase\expandafter{\romannumeral3}$ \to $ \uppercase\expandafter{\romannumeral6}$ \to $ \uppercase\expandafter{\romannumeral5}$ \to $ \uppercase\expandafter{\romannumeral8}$ \to $ \uppercase\expandafter{\romannumeral7}$ \to $ \uppercase\expandafter{\romannumeral4};
	\item going through $6$ cells: \uppercase\expandafter{\romannumeral1}$\to$ \uppercase\expandafter{\romannumeral2}$ \to $  \uppercase\expandafter{\romannumeral5}$ \to $ \uppercase\expandafter{\romannumeral8}$ \to $ \uppercase\expandafter{\romannumeral7}$ \to $ \uppercase\expandafter{\romannumeral4} or \uppercase\expandafter{\romannumeral1}$ \to $ \uppercase\expandafter{\romannumeral2}$ \to $ \uppercase\expandafter{\romannumeral3}$ \to $ \uppercase\expandafter{\romannumeral6}$ \to $ \uppercase\expandafter{\romannumeral5}$ \to $  \uppercase\expandafter{\romannumeral4};
	\item going through $4$ cells: \uppercase\expandafter{\romannumeral1} $ \to $ \uppercase\expandafter{\romannumeral2} $ \to $ \uppercase\expandafter{\romannumeral5} $ \to $ \uppercase\expandafter{\romannumeral4}.
\end{enumerate}
The latter two cases is convergent by Lemma \ref{lem:2xn}. We now study the first case.

For the first case, there are two real numbers $ d_{c}, d_{C}\in (0,P] $, such that: when $ d_{0}\in (0,d_{c}] $, $ d_{0} $ is small enough so that the trajectory passes 4 cells; when $ d_{0}\in(d_{C},A] $, the path would go though 8 cells; when $ d_{0}\in(d_{c},d_{C}] $, the trajectory would pass 6 cells. Without loss of generality, we assume its path is \uppercase\expandafter{\romannumeral2}$ \to $  \uppercase\expandafter{\romannumeral5}$ \to $ \uppercase\expandafter{\romannumeral8}$ \to $ \uppercase\expandafter{\romannumeral7}$ \to $ \uppercase\expandafter{\romannumeral4}$ \to $ \uppercase\expandafter{\romannumeral1}$ \to $ \uppercase\expandafter{\romannumeral2}.

We define a Poincar\'e mapping $ f:\mathfrak{l}\to\mathfrak{l} $. For any initial $ d_{0}\in(d_{C},P] $, after passing 8 cells, the trajectory would return to Cell \uppercase\expandafter{\romannumeral1} and leave it at $ f(d_{0})\in\mathfrak{l}$ again. And we use $ d_{i},i=0,\cdots,7 $ to capture the "distances" during the process, see Figure \ref{fig:provemixed}. Similarly, we can define another Poincar\'e mapping for the path going through 6 cells as $ \tilde{f}:\mathfrak{l}\to\mathfrak{l} $. According to the convergence of 6-cell path, we always have $ \tilde{f}(d_{0})<d_{0} $.

We note that $ f $ is composition of $ 8 $ different mappings, i.e., $ f(d_{0})=\phi_{7}\circ\phi_{6}\circ\phi_{5}\circ\phi_{4}\circ\phi_{3}\circ\phi_{2}\circ\phi_{1}\circ\phi_{0}(d_{0})$, which can be calculated based on similar triangles as below: 
\[ 	\begin{aligned}
	\frac{d_{1}+Q+R}{d_{0}+Q+R}=\frac{r}{q+r}\ &\Rightarrow\ d_{1} = \phi_{0}(d_{0})=\frac{r}{q+r}d_{0}-\frac{q}{q+r}(Q+R),\\
	\frac{d_{2}}{r}=\frac{d_{1}}{d_{1}+M}\ &\Rightarrow\  d_{2}=\phi_{1}(d_{1})=\frac{r\cdot d_{1}}{d_{1}+M},\\
	\frac{d_{3}}{d_{2}}=\frac{M}{d_{2}+p+q}\ &\Rightarrow\ d_{3}=\phi_{2}(d_{2})=\frac{M\cdot d_{2}}{d_{2}+p+q},\\
	\frac{Q-d_{3}}{q-d_{4}}=\frac{Q+R-d_{3}}{p+q}\ &\Rightarrow\ d_{4}=\phi_{3}(d_{3})=q-\frac{(p+q)(Q-d_{3})}{Q+R-d_{3}},\\
	\frac{d_{5}}{d_{4}}=\frac{R}{d_{4}+m}\ &\Rightarrow\ d_{5}=\phi_{4}(d_{4})=\frac{R\cdot d_{4}}{d_{4}+m},\\
	\frac{d_{6}}{d_{5}}=\frac{m}{d_{5}+P+Q}\ &\Rightarrow\ d_{6}=\phi_{5}(d_{5})=\frac{m\cdot d_{5}}{d_{5}+P+Q},\\
	\frac{p-d_{7}}{p-d_{6}}=\frac{P}{P+Q}\ &\Rightarrow\ d_{7}=\phi_{6}(d_{6})=\frac{P}{P+Q}d_{6}+\frac{Q}{P+Q}p,\\
	\frac{f(d_{0})}{d_{7}}=\frac{P}{d_{7}+q+r}\ &\Rightarrow\ f(d_{0})=\phi_{7}(d_{7})=\frac{P\cdot d_{7}}{d_{7}+q+r}.
\end{aligned} \]
Hence, 
\[ \begin{aligned}
	f&^{\prime}(d_{0})=\phi_{7}^{\prime}(d_{7})\cdot\phi^{\prime}_{6}(d_{6})\cdot\phi^{\prime}_{5}(d_{5})\cdot\phi^{\prime}_{4}(d_{4})\cdot\phi^{\prime}_{3}(d_{3})\cdot\phi^{\prime}_{2}(d_{2})\cdot\phi^{\prime}_{1}(d_{1})\cdot\phi^{\prime}_{0}(d_{0}), \\
	&=\frac{P(q+r)}{(d_{7}+q+r)^{2}}\cdot\frac{P}{P+Q}\cdot\frac{m(P+Q)}{(d_{5}+P+Q)^{2}}\cdot\frac{R\cdot m}{(d_{4}+m)^{2}}\cdot\frac{(p+q)R}{(Q+R-d_{3})^{2}}\\
	&\cdot\frac{M(p+q)}{(d_{2}+p+q)^{2}}\cdot\frac{r\cdot M}{(d_{1}+M)^{2}}\cdot\frac{r}{q+r}.
\end{aligned} \]
It is obvious that $ f^{\prime}(d_{0})>0 $. Note $ d_{k}\geq 0, k=0,\cdots,7 $ and $ d_{3}< Q $, we have
\[ f^{\prime}(d_{0})<\frac{P(q+r)}{(q+r)^{2}}\cdot (\frac{P}{P+Q})^{2}(\frac{r}{q+r})^{2}<1. \] 
Let $ g(d_{0})=d_{0}-f(d_{0}) $, we have $ \forall d_{0}\in (d_{C},P], g^{\prime}(d_{0})>0 $. Next we shall prove that for all $ \forall d_{0}\in (d_{C},P], g(d_{0})>0 $, then $ f(d_{0})<d_{0} $. Hence the Poincar\'e map is contractive.

\begin{figure}[h]
	\centering
	\includegraphics[width=0.8\linewidth]{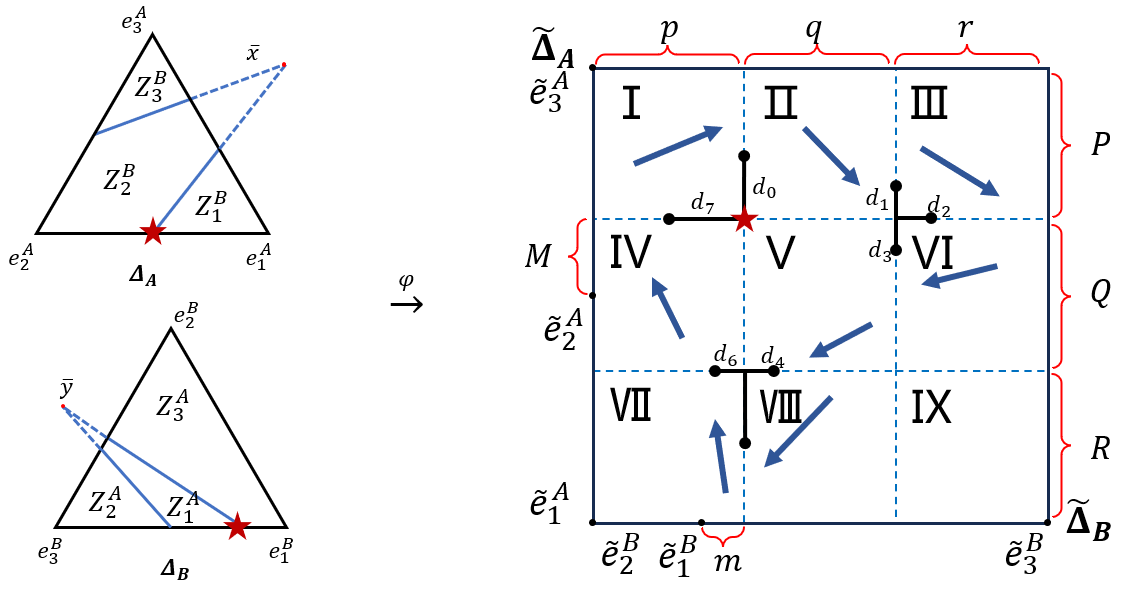}
	\caption{An example of the game without IIP with unique mixed NE: the red star profile in $\Delta_A$ and $\Delta_B$ is the unique mixed NE, and the blue arrow in each represents the vector field of PBRD.}
	\label{fig:provemixed}
\end{figure}

If $ d_{0}=d_{C} $, the trajectory would collapse from 8-cell path, \uppercase\expandafter{\romannumeral1}$ \to $ \uppercase\expandafter{\romannumeral2}$ \to $ \uppercase\expandafter{\romannumeral3}$ \to $ \uppercase\expandafter{\romannumeral6}$ \to $ \uppercase\expandafter{\romannumeral5}$ \to $ \uppercase\expandafter{\romannumeral8}$ \to $ \uppercase\expandafter{\romannumeral7}$ \to $ \uppercase\expandafter{\romannumeral4}, to 6-cell path, \uppercase\expandafter{\romannumeral1}$\to$ \uppercase\expandafter{\romannumeral2}$ \to $  \uppercase\expandafter{\romannumeral5}$ \to $ \uppercase\expandafter{\romannumeral8}$ \to $ \uppercase\expandafter{\romannumeral7}$ \to $ \uppercase\expandafter{\romannumeral4}, by crossing the point $ (p+q, Q+R) $ and avoiding entering the interior of Cell \uppercase\expandafter{\romannumeral3}. However, we can still view it as an 8-cell path with $ d_{1}=\phi_{0}(d_{C})=0, d_{2}=\phi_{1}(d_{C})= \frac{d\cdot 0}{0+E}=0, d_{3}=\phi_{2}(d_{C})=\frac{E\cdot0}{0+a+b}=0$. Hence $ f(d_{C}) $ is well-defined as the compose of $ \phi_{k},k=0,\cdots,7 $. 

As a result, $ f(d_{C})=\tilde{f}(d_{C}) <d_{C} $, we have $ g(d_{C})>0 $. Since $ g^{\prime}(d_{0})>0, \forall d_{0}\in (d_{C},A] $, we have $ \forall d_{0}\in (d_{C},A], g(d_{0})>0 $, i.e., $ f(d_{0})<d_{0} $. Hence the Poincar\'e mapping is contractive. 

We claim that for any initial point $ \exists\ d\in(d_{C},P] $, the trajectory would eventually satisfy $ f^{k}(d)<d_{C} $. Otherwise, suppose $\forall k, f^{k}(d)\geq d_{C} $. We get a sequence $\{ d^{k}=f^{k}(d)\} $ which is lower-bounded. Since the mapping $ f $ is contractive, $ \{d^{k}\} $ is strictly decreasing, obviously  $ \lim_{k\to\infty}d^{k}=d_{C} $. Hence we have, 
\[ d_{C}\leq \lim_{k\to\infty}f(d^{k})=f(d_{C})<d_{C}.\]
Contradiction. Thus after sufficient iterations, $ f^{k}(d) $ would be less than $ d_{C} $, and the trajectory will follow the 6-cell path and converge to NE. 
\end{proof}

We now give an example about the BRD and PBRD for games with only one pure NE. Consider the following games
\begin{example}\label{eg:proof_pure_NE}
	\begin{align}\label{game:unique_PNE}
		A = \begin{bmatrix}
			0      & 0.99      & -0.51      \\[1.5ex]
			0  & 0      & 0.5      \\[1.5ex]
			0.01      & 0.5      & 0
		\end{bmatrix},\quad B = \begin{bmatrix}
			0     & -1    & 1 \\[1.5ex]
			-0.99    & 0     & -2     \\[1.5ex]
			0.01     & 0    & 0
		\end{bmatrix}.
	\end{align}
\end{example}
Game (\ref{game:unique_PNE}) is without IIP and admits an unique NE $(e^A_3,e^B_1)$, which is a pure NE. Figure \ref{fig:pure} shows the trajectory of CFP in $\Delta_A\times\Delta_B$, and Figure \ref{fig:pure_PBRD} shows the trajectory of PBRD in $\tilde{\Delta}$. We can see that the best response regions $ Z^{B}_{1} $ and $ Z^{A}_{3} $ are quit narrow, and the trajectory approaches to them until it enters $ Z^{B}_{1}\times Z^{A}_{3} $. Once it enters, it will go to the pure NE straightly.

\begin{figure}[ht]
	\centering
	\begin{subfigure}[b]{0.69\textwidth}
		\includegraphics[width=0.87\linewidth]{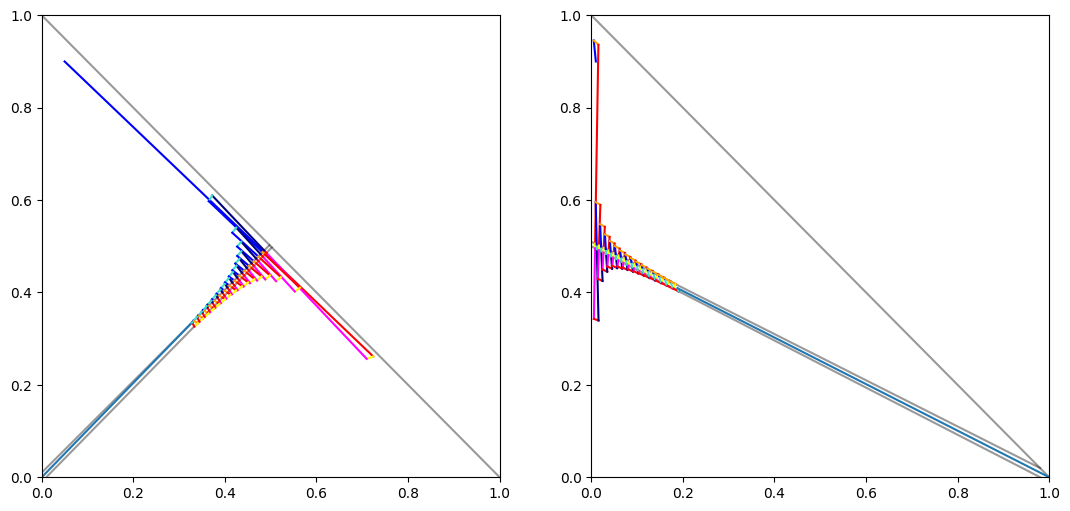}
		\caption{CFP in $\Delta_A\times \Delta_B$}
		\label{fig:pure}
	\end{subfigure}
	\hfill
	\begin{subfigure}[b]{0.3\textwidth}
		\includegraphics[width=1\linewidth]{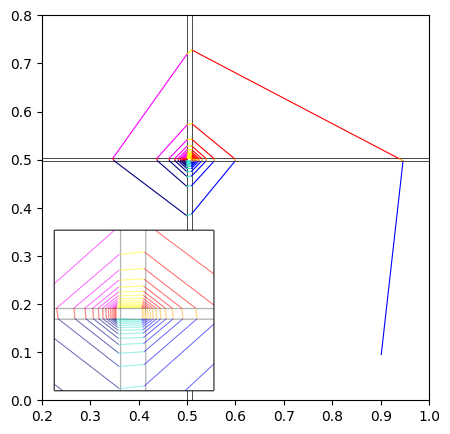}
		\caption{PBRD in $\tilde{\Delta}$}
		\label{fig:pure_PBRD}
	\end{subfigure}
	\caption{Trajectory of CFP and PBRD for Example \ref{eg:proof_pure_NE}: different colors indicate different action profiles. There are still nine cells in Figure \ref{fig:pure_PBRD} but Cell \uppercase\expandafter{\romannumeral2}, \uppercase\expandafter{\romannumeral4}, \uppercase\expandafter{\romannumeral5}, \uppercase\expandafter{\romannumeral6} and \uppercase\expandafter{\romannumeral8} are very narrow. The lower-left corner figure shows the enlargement of dynamics near the pure NE.}
	\label{fig:combine}
\end{figure}

\subsection{Proof of Main Results, Part  \uppercase\expandafter{\romannumeral2}: Games with Multiple NEs}\label{sec3-5:other}
For the game without IIP but having multiple NEs, the CFP dynamics near different NEs could be quite different. Next, we introduce some concepts to distinguish NE, which is necessary for the analysis.

For a mixed NE $ (\mathbf{x},\mathbf{y}) $ with support size $ 2\times 2 $, without loss of generality, suppose $ \mathbf{x}\in e^{A}_{i}e^{A}_{j} $ and $ \mathbf{y}\in e^{B}_{i^{\prime}}e^{B}_{j^{\prime}} $, then based on the lemma \ref{lem:indifferent-of-support}, $\mathbf{x}\in l^{B}_{i^{\prime}j^{\prime}}, \mathbf{y}\in l^{A}_{ij}$.
\begin{definition}\label{def:saddle-sink}We say NE $ (\mathbf{x},\mathbf{y}) $ above is a 
	\begin{enumerate}
		\item {saddle}, if there exist neighborhoods $U_{A}$ of $\mathbf{x}$ and $U_{B}$ of $ \mathbf{y} $ such that 
		\begin{equation}\label{equ:def_saddle}
			\overline{\mathbf{x}e^{A}_{i}}\cap U_{A} \subseteq Z^{B}_{i^{\prime}}, \overline{\mathbf{x}e^{A}_{j}}\cap U_{A}\subseteq Z^{B}_{j^{\prime}},
			\overline{\mathbf{y}e^{B}_{i^{\prime}}}\cap U_{B} \subseteq Z^{A}_{i}, \overline{\mathbf{y}e^{B}_{j^{\prime}}}\cap U_{A}\subseteq Z^{A}_{j}; 
		\end{equation}
		\item {sink}, if there exist neighborhoods $U_{A}$ of $\mathbf{x}$ and $U_{B}$ of $ \mathbf{y} $ such that 
		\begin{equation}\label{equ:def_sink}
			\overline{\mathbf{x}e^{A}_{i}}\cap U_{A}\subseteq Z^{B}_{i^{\prime}}, \overline{\mathbf{x}e^{A}_{j}}\cap U_{A}\subseteq Z^{B}_{j^{\prime}}, \overline{\mathbf{y}e^{B}_{i^{\prime}}}\cap U_{B} \subseteq Z^{A}_{j}, \overline{\mathbf{y}e^{B}_{j^{\prime}}}\cap U_{A}\subseteq Z^{A}_{i}. 
		\end{equation}
	\end{enumerate}
\end{definition}

First, Definition \ref{def:saddle-sink} has already considered all the possible arrangements of best response regions, thus a mixed NE is either a saddle or a sink. 

Second, we can draw the vector field near each mixed NE in $\tilde{\Delta}$, see Figure \ref{fig:two-class-2x2-ne}. For the saddle NE, by Definition \ref{def:saddle-sink}, $\overline{\mathbf{x}e^{A}_{i}}\cap U_{A} \subseteq Z^{B}_{i^{\prime}}$,$\overline{\mathbf{y}e^{B}_{i^{\prime}}}\cap U_{B} \subseteq Z^{A}_{i}$, thus on upper-left cell near the saddle NE, the best response of Player B is $\tilde{e}^B_{i^{\prime}}$ and the best response of Player A is $\tilde{e}^A_{i}$. Hence the vector field would be toward the point $\tilde{e}^A_{i}\times\tilde{e}^B_{i^{\prime}}$, i.e. points to the upper left. Similarly, we can draw all the other vector fields in the other three cells. And we can see that starting from two cells along one diagonal, the trajectory would approach the saddle NE, while it would move away from the saddle NE starting from the other two cells along the other diagonal. Thus the dynamics near the saddle NE is quite similar to the dynamics near the saddle point in the smooth dynamical systems, which is defined base on the Jacobian matrix. Hence it is reasonable to call it a saddle. It is the same for the definition of sink NE.

\begin{figure}[htbp]
	\centering
	\includegraphics[width=0.8\linewidth]{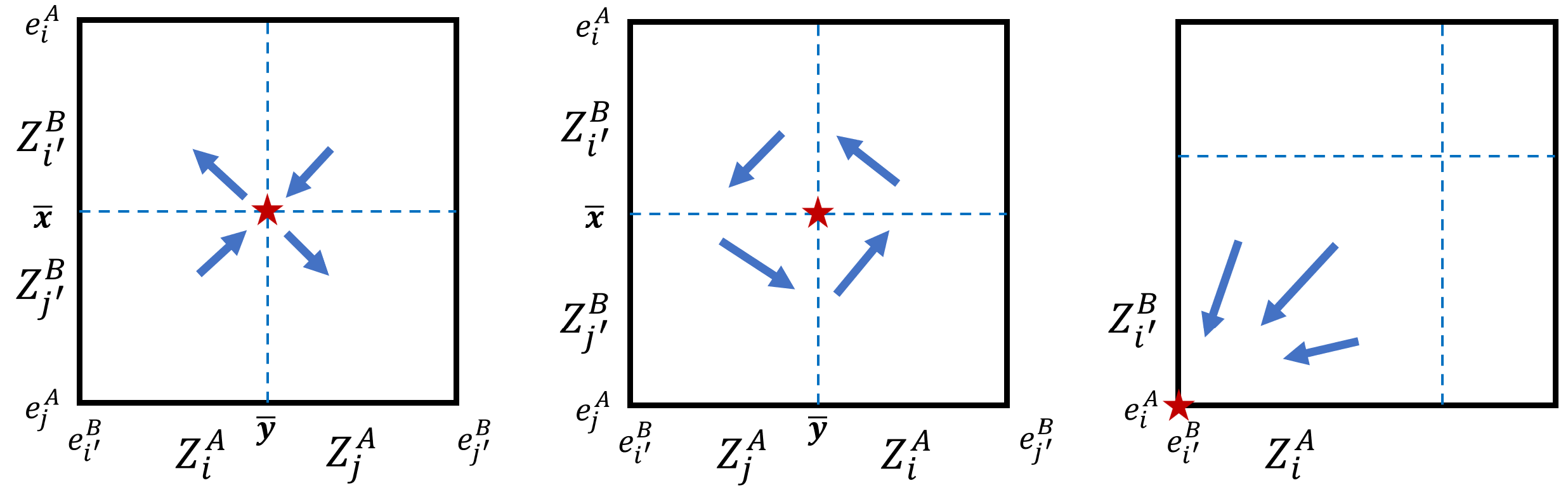}
	\caption{The dynamics near saddle NE and sink NE under mapping $\varphi$.}
	\label{fig:two-class-2x2-ne}
\end{figure}

By Definition \ref{def:saddle-sink}, the previous convergence results Theorem \ref{thm:2-mixed} are dealing with the sink NE. However, when it comes to multiple NEs, it is possible that there is a saddle NE. Proposition \ref{pro:exist-saddle} below states that this claim is confirmative.

\begin{proposition}\label{pro:exist-saddle}
	Suppose the $ 3\times 3 $ game without IIP has multiple NEs, then there must exist a mixed NE being a saddle, and a NE being a sink. 
\end{proposition}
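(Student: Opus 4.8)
The plan is to prove both statements simultaneously by a topological index count of the PBRD vector field (\ref{prf1:equ1}) on the square $\tilde\Delta$, assuming as throughout that the game is nondegenerate. By Proposition \ref{pro:loc-NE} every mixed NE has support size $2\times2$ and is carried by $\varphi$ to one of the four vertices of Cell \uppercase\expandafter{\romannumeral5}, and by the discussion following Definition \ref{def:saddle-sink} each such NE is either a saddle or a sink. First I would attach to every equilibrium a local index, namely the winding number of the PBRD field along a small loop around $\varphi(\mathbf x,\mathbf y)$. Reading off the four-cell arrows in Figure \ref{fig:two-class-2x2-ne}, a sink NE is the configuration in which all four adjacent cells flow toward the vertex, so its index is $+1$, whereas a saddle NE has the flow entering along one diagonal pair of cells and leaving along the other, so its index is $-1$. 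A pure NE $(e^A_i,e^B_j)$ is a genuine rest point of (\ref{prf1:equ1}) into which the surrounding cell flows directly, hence it too has index $+1$ and is a sink in the dynamical sense.

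The second step is to establish that these indices sum to $+1$. Since BRD keeps every orbit inside $\Delta_A\times\Delta_B$, the field (\ref{prf1:equ1}) leaves $\tilde\Delta$ invariant and therefore points inward along $\partial\tilde\Delta$ away from its finitely many rest points; Poincar\'e--Hopf on the disk $\tilde\Delta$, whose Euler characteristic is $1$, then gives $\sum_{\mathrm{NE}}\operatorname{ind}=+1$. Equivalently one may invoke the intrinsic index theorem for nondegenerate games, which asserts the same sum independently of any dynamics. Writing $n_+$ for the number of index-$(+1)$ equilibria, that is pure NEs together with sink NEs, and $n_-$ for the number of saddle NEs, this identity reads $n_+-n_-=1$.

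Finally I would feed in the hypothesis of multiple NEs. If $n_-=0$ then $n_+=1$ and the game would possess a single equilibrium, contradicting the assumption; hence $n_-\ge 1$, and because saddles occur only among mixed $2\times2$-support NEs this already produces a mixed NE that is a saddle, as required. Moreover $n_+=n_-+1\ge 2$, so at least one equilibrium has index $+1$, i.e. a sink exists. Note that this count simultaneously forces the existence of a mixed NE, which is itself a nontrivial byproduct.

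The hard part will be justifying the index calculus in the non-smooth setting, since (\ref{prf1:equ1}) is only piecewise constant in its target and is discontinuous across cell boundaries. For the local indices this is routine once one checks that the field is nonvanishing on a small square loop around each vertex and tracks its rotation cell by cell, matching Figure \ref{fig:two-class-2x2-ne}; the delicate point is the global sum, because a pure NE generically projects onto $\partial\tilde\Delta$ (under the chosen projection only $(e^A_3,e^B_1)$ lands in the interior), where the field is merely tangent rather than strictly inward. I would resolve this either by a small inward perturbation of the field near such boundary equilibria, or, more cleanly, by replacing the disk count with the intrinsic Nash index theorem, which bypasses the boundary behaviour altogether. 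A fully self-contained alternative, in the enumerative spirit of \ref{app:enumeration}, is to run through the admissible arrangements of the best response regions around Cell \uppercase\expandafter{\romannumeral5} and verify the saddle/sink bookkeeping directly in each case; this avoids index theory but is considerably more laborious.
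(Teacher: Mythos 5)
Your route is genuinely different from the paper's. The paper proves the saddle part by contradiction plus enumeration: assuming multiple NEs but no saddle, it anchors at one NE (a pure NE inside Cell \uppercase\expandafter{\romannumeral5}, a pure NE at a vertex of $\tilde{\Delta}$, or a mixed sink, the latter two cases in \ref{app:exist-saddle}), shows that the absence of a saddle then fixes the arrangement of best-response regions so completely that no second NE can exist, and derives a contradiction; the sink part is then deduced from the convergence result (Theorem \ref{pro:converge_multi_NE}), whose proof uses only the saddle part. Your index count would replace all of this case analysis by the single identity $n_+-n_-=1$, which is sharper than the proposition (an exact count rather than mere existence), and your local computations are correct: for the piecewise field (\ref{prf1:equ1}), a sink vertex has winding $+1$, a saddle vertex $-1$, and an interior pure rest point $+1$. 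Your reading of ``sink'' as ``index $+1$, i.e.\ attracting, possibly pure'' is also the right one: a game without IIP can have two pure NEs and one mixed saddle and no mixed sink at all (this configuration appears in the paper's own case (1), where two pure NEs force the intermediate mixed profile to be a saddle), so the sink in the statement cannot always be taken mixed. Note also that your ``fully self-contained alternative'' at the end is essentially the paper's actual proof.

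There are, however, concrete gaps to fill before the index argument is a proof. First, Poincar\'e--Hopf (after any smoothing) equates the boundary winding with the sum of indices over \emph{all} singularities of the field, whereas your formula $\sum_{\mathrm{NE}}\operatorname{ind}=+1$ silently identifies the singular set with the set of NE images. That is not automatic: the field (\ref{prf1:equ1}) is discontinuous along every projected indifference line and at \emph{all four} vertices of Cell \uppercase\expandafter{\romannumeral5}, while Proposition \ref{pro:loc-NE} only says NE images lie among those vertices, not conversely. You must check (i) that every non-NE vertex of Cell \uppercase\expandafter{\romannumeral5} has index $0$ --- true, because there the two relevant targets of one player, say $\tilde{e}^{A}_{k}$ and $\tilde{e}^{A}_{k'}$, lie on the same side of the separating line $\tilde{l}^{B}_{jj'}$, so one component of the field has constant sign near the vertex --- and (ii) that the indifference lines carry no index: across such a line only one player's best response switches, the other component of the field is continuous and, by nondegeneracy ($\tilde{e}^{A}_{k}\neq\tilde{l}^{B}_{jj'}$), nonvanishing, so all Filippov combinations are nonzero and the field can be mollified there without creating zeros. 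Second, your boundary premise is not correct as stated: invariance of $\tilde{\Delta}$ does not yield a field ``pointing inward away from finitely many rest points.'' The field is tangent along entire boundary segments (e.g.\ along the part of an edge lying over $\tilde{Z}^{B}_{j}$ whenever $\tilde{e}^{B}_{j}$ is the adjacent corner) and vanishes at boundary pure NEs, so the boundary winding is not even defined before the inward perturbation or extension you mention; that fix must actually be carried out, not just invoked. Finally, the ``intrinsic Nash index theorem'' shortcut still owes a lemma the citation does not provide: that the geometric saddle/sink of Definition \ref{def:saddle-sink} has Nash index $-1/{+1}$ (and that the equilibria of these games are regular), which is precisely the identification of your winding numbers with the algebraic index. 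None of these gaps looks fatal, but they are where the real work of your proof lives.
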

\begin{proof}
Without loss of generality, we suppose the game has the same shape as Figure \ref{fig:from-triangle-to-square}, in which the vertices of $\tilde{\Delta}$ and the location of indifferent lines are fixed, but to what pure action each best response region corresponds is still  uncertain. 

Use contradiction. Suppose that the game has multiple NEs but no saddle NE. We shall start from one fixed NE, and prove with the absence of saddle, the game would have unique NE, and hence get contradiction. 

(1) First, assume that the game has a pure NE lying in Cell \uppercase\expandafter{\romannumeral5}.

Then by Figure \ref{fig:from-triangle-to-square} it is $e^{A}_{3}\times e^{B}_{1}$. Hence $ e^{A}_{3} $ is the best response to $ e^{A}_{3} $, and vice verse. Thus Cell \uppercase\expandafter{\romannumeral5} is the product $\tilde{Z}^{B}_{1}\times\tilde{Z}^{A}_{3}$. Since Cell \uppercase\expandafter{\romannumeral5} is fixed, we know that $e^{B}_{i}\notin Z^{A}_{3}$ for $i\neq1$ and $e^{A}_{j}\notin Z^{B}_{1}$ for $j\neq 3$. Thus any pure strategy profile $e^{A}_{3}\times e^{B}_{i},i\neq1$ or $e^{A}_{j}\times e^{B}_{1},j\neq3$ is not a NE. 

As for the other pure strategy profiles, we claim that they are not NE either. Otherwise, suppose $e^{A}_{1}\times e^{B}_{2}$ is a NE. Then we have $e^{A}_{1}\in Z^{B}_{2}$ and $e^{B}_{2}\in Z^{A}_{1}$. Then we can fix the arrangement of BRPs in $ \Delta_{A} $ and $ \Delta_{B} $. 
Then from the figure, we know that the profile $(l^{B}_{12}\cap e^{A}_{1}e^{A}_{3},l^{A}_{13}\cap e^{B}_{1}e^{B}_{2})$  is a mixed NE and a saddle by Definition \ref{def:saddle-sink}, which contradicts the absence of saddle. Using the same logic, we can prove all the pure strategy profiles cannot be NE.

On the other hand, we claim that any mixed strategy profile cannot be a NE either. Otherwise, we suppose the mixed strategy $ \mathbf{x} $ on edge $e^{A}_{2}e^{A}_{3}$ and mixed strategy $ \mathbf{y} $ on edge $e^{B}_{2}e^{B}_{1}$ constitute a NE. Then they must be the intersection points corresponding to $l^{B}_{12}$ and $l^{A}_{23}$. Since we already have $e^{A}_{3}\in{Z}^{B}_{1}$ and $e^{B}_{1}\in{Z}^{A}_{3}$, by Definition \ref{def:saddle-sink}, $ (\mathbf{x},\mathbf{y}) $ is a saddle. Contradiction. 

Thus the game has an unique NE, contradiction.

(2) Next we can assume that the game has a pure NE which lies on the vertex of $ \tilde{\Delta} $.

(3) Finally, we assume that the game has a mixed NE which is a sink.

The analysis for case (2) and (3) is similar to case (1), see \ref{app:exist-saddle} for details. In all the cases, the absence of saddle NE and the existence of multiple NEs would lead to a contradiction. Thus we prove the existence of saddle NE.

The existence of sink NE is a straightforward corollary of our main results Theorem \ref{thm:main-result}, whose proof only relies on the existence of saddle NE. Hence this does not lead to circular argument. And we prove the whole proposition.
\end{proof}

\begin{theorem}\label{pro:converge_multi_NE}
	For any $ 3\times3 $ game without IIP which admits multiple NEs,  CFP would converge to some NE for any initial point.
\end{theorem}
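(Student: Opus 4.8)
The plan is to carry the whole argument on the projected square $\tilde\Delta$, exactly as in Theorem \ref{thm:2-mixed}: by Proposition \ref{pro1:project=origin} it suffices to show that the projected best-response dynamics PBRD converges to the image of some NE from every initial point, since BRD (and hence CFP, which shares the same orbits) then converges to the corresponding NE. By Proposition \ref{pro:loc-NE} every mixed NE is imaged onto one of the four vertices of Cell~\uppercase\expandafter{\romannumeral5}, so the only interior candidate limit points are those four vertices, together with the pure-NE cells. First I would invoke Proposition \ref{pro:exist-saddle} to fix a saddle NE; its image $\tilde N_{s}$ is a vertex of Cell~\uppercase\expandafter{\romannumeral5} carrying the saddle vector field of Figure \ref{fig:two-class-2x2-ne}.

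The central idea, matching the ``separating the projected space into two parts'' strategy, is to use this saddle to split $\tilde\Delta$. I would build the stable set $W^{s}$ of $\tilde N_{s}$: locally it is the pair of incoming separatrices along the attracting diagonal of the four cells meeting at $\tilde N_{s}$, and tracing these curves backward through the piecewise-linear flow (each segment is an explicit line, computed by the same similar-triangle maps $\phi_{k}$ as in Theorem \ref{thm:2-mixed}) extends them to a curve that cuts $\tilde\Delta$ into two regions $R_{1}$ and $R_{2}$. Because $W^{s}$ is itself a union of PBRD trajectories, forward uniqueness of the flow prevents any orbit from crossing it, so each $R_{i}$ is forward invariant. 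A counting step, using that each $R_i$ inherits at least one sink by Proposition \ref{pro:exist-saddle}, then shows that the flow in $R_{i}$ is organised toward a sink NE image in its interior.

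Within a single region $R_{i}$ I would reproduce the contraction argument of Theorem \ref{thm:2-mixed}: choose a transversal segment $\mathfrak{l}$ that every non-equilibrium orbit of $R_{i}$ must cross, define the return map $f$ as the composition of the finitely many $\phi_{k}$ corresponding to the admissible cell itineraries in $R_{i}$, and verify $f'<1$ so that $f$ is contractive and the distance to the sink strictly decreases. As before, contractivity forces every orbit eventually onto the short itinerary reaching the sink, after which each player uses only two pure strategies and Lemma \ref{lem:2xn} gives convergence; orbits starting exactly on $W^{s}$ converge to the saddle. Combining these cases and applying Proposition \ref{pro1:project=origin} yields convergence of CFP, and the remaining arrangements of best-response regions are handled identically, as in Theorem \ref{thm:2-mixed}.

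I expect the main obstacle to be the global, rather than merely local, control of the separatrix $W^{s}$ in this nonsmooth setting: one must show that, followed backward, it reaches the boundary of $\tilde\Delta$ without re-entering the four cells around $\tilde N_{s}$, and that it genuinely disconnects the square into two invariant basins. A secondary difficulty is that, unlike the unique-NE case, the return map in each region is taken near a \emph{boundary} saddle where orbits slow down and the cell itinerary can change with the starting distance on $\mathfrak{l}$; establishing a uniform contraction on each itinerary piece—and thereby excluding periodic orbits or a heteroclinic cycle between the saddle and a sink—is where the bulk of the technical work lies.
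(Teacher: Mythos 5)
Your overall skeleton---invoke Proposition \ref{pro:exist-saddle} to obtain a saddle NE, draw separatrices through its image in $\tilde\Delta$, use the fact that orbits cannot cross them to split the square into forward-invariant pieces, and finish with Lemma \ref{lem:2xn}---is exactly the paper's strategy. However, one of your steps is circular. Your ``counting step'' assumes each region $R_i$ ``inherits at least one sink by Proposition \ref{pro:exist-saddle}.'' In the paper, the sink half of that proposition is \emph{not} proved independently: it is explicitly derived as a corollary of the main convergence theorem (the very statement you are proving here), and the paper stresses that the proof of convergence may rely only on the \emph{saddle} half. So you cannot organize the flow in each region around a sink NE whose existence you have no independent argument for; any proof built on that counting step collapses.

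The second problem is that the contraction/return-map machinery you import from Theorem \ref{thm:2-mixed} is precisely where you concede ``the bulk of the technical work'' remains (uniform contraction near a boundary saddle, excluding periodic orbits and heteroclinic cycles, global backward control of $W^{s}$), and none of it is needed. The paper uses only four \emph{local} segments inside the four cells meeting the saddle image: the two stable segments (in Cells II and IV, obtained by joining the saddle to the attracting corner of those cells and extending across) and the two unstable segments (in Cells I and V); no global backward continuation of $W^{s}$ is attempted, so the obstacle you flag as the main difficulty never arises. The finish is purely combinatorial: suppose some orbit does not converge; by Lemma \ref{lem:2xn} both players must then play all three actions infinitely often; but the non-crossing constraints force such an orbit either to remain forever in the upper-left region cut off by the stable segments (where each player uses at most two actions), or, after tracing the only admissible cell itinerary (IV to V to VIII to IX to VI to III to II to V, and thereafter only among the right-hand cells), to be trapped in the six cells on the right (where one player uses at most two actions). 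Either way one player eventually abandons an action, contradicting non-convergence, and Lemma \ref{lem:2xn} then yields convergence. Replacing your sink-counting and contraction steps by this itinerary-confinement argument is what closes the gap.
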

\begin{proof}
By Proposition \ref{pro:exist-saddle}, the game has a saddle NE $(\mathbf{x},\mathbf{y})\in\Delta$, which must be mixed. We let $\mathbf{x}\in e^{A}_{i}e^{A}_{j}$ and $\mathbf{y}\in e^{B}_{i^{\prime}}e^{B}_{j^{\prime}}$. Then under the projection mapping $\varphi$, by Lemma \ref{lem:indifferent-of-support}, we have $\varphi(\mathbf{x},\mathbf{y})=\tilde{l}^{B}_{i^{\prime}j^{\prime}}\times \tilde{l}^{A}_{ij}$, where $\tilde{l}^{B}_{i^{\prime}j^{\prime}}$ lies between $\tilde{e}^{A}_{i}$ and $\tilde{e}^{A}_{j}$ in $\tilde{\Delta}_{A}$, while $\tilde{l}^{A}_{ij}$ lies between $\tilde{e}^{B}_{i^{\prime}}$ and $\tilde{e}^{B}_{j^{\prime}}$ in $\tilde{\Delta}_{B}$. 

By Proposition \ref{pro:loc-NE}, the point $ \tilde{l}^{B}_{i^{\prime}j^{\prime}}\times \tilde{l}^{A}_{ij} $ is one vertex of Cell \uppercase\expandafter{\romannumeral5}. Without loss of generality, we assume it is the upper left vertex of Cell \uppercase\expandafter{\romannumeral5}; meanwhile, we assume that in $\tilde{\Delta}_{A}$, $\tilde{e}^{A}_{i}$ is above $\tilde{l}^{B}_{i^{\prime}j^{\prime}}$, $\tilde{e}^{A}_{j}$ is below $\tilde{l}^{B}_{i^{\prime}j^{\prime}}$, while in $\tilde{\Delta}_{B}$, $\tilde{e}^{B}_{i^{\prime}}$ is on the left of $\tilde{l}^{A}_{ij}$, and $\tilde{e}^{B}_{j^{\prime}}$ is on the right of $\tilde{l}^{A}_{ij}$.

For the four cells around the saddle NE $ \tilde{l}^{B}_{i^{\prime}j^{\prime}}\times \tilde{l}^{A}_{ij} $, i.e., Cell \uppercase\expandafter{\romannumeral1}, \uppercase\expandafter{\romannumeral2}, \uppercase\expandafter{\romannumeral4}, \uppercase\expandafter{\romannumeral5}, by Definition \ref{def:saddle-sink}, there are only two possible arrangements of BRPs: (1) Cell \uppercase\expandafter{\romannumeral1} is $\tilde{Z}^{B}_{i^{\prime}}\times \tilde{Z}^{A}_{i}$; (2) Cell \uppercase\expandafter{\romannumeral1} is $\tilde{Z}^{B}_{j^{\prime}}\times \tilde{Z}^{A}_{j}$. And once Cell \uppercase\expandafter{\romannumeral1} is fixed, the other three cells are also determined, see Figure \ref{fig:saddlepoint}. Below we prove the theorem for case (1), and the proof for case (2) is similar.

For case (1), i.e., Cell \uppercase\expandafter{\romannumeral1} is $\tilde{Z}^{B}_{i^{\prime}}\times \tilde{Z}^{A}_{i}$, the trajectory starting from this cell would go toward $\tilde{e}^{A}_{i}\times \tilde{e}^{B}_{i^{\prime}}$, which lies in the upper left of the saddle NE, hence the trajectory would diverge from the NE. Connect the saddle NE with $\tilde{e}^{A}_{i}\times \tilde{e}^{B}_{i^{\prime}}$ in Cell \uppercase\expandafter{\romannumeral1}, we can get a line segment which divides Cell \uppercase\expandafter{\romannumeral1} into two parts: the trajectory starting from one side of the line cannot cross it, see the yellow line in Cell \uppercase\expandafter{\romannumeral1} from Figure \ref{fig:saddlepoint}. Similarly, we can get the other yellow line in Cell \uppercase\expandafter{\romannumeral5}.

On the other hand, the trajectory starting from Cell \uppercase\expandafter{\romannumeral2} ( which is $\tilde{Z}^{B}_{i^{\prime}}\times \tilde{Z}^{A}_{j}$)  would move toward the point  $\tilde{e}^{A}_{j}\times \tilde{e}^{B}_{i^{\prime}}$, which lies in the lower left of the saddle NE. Hence the trajectory would move closer to NE. We can still connect NE with $\tilde{e}^{A}_{j}\times \tilde{e}^{B}_{i^{\prime}}$ and extend the line (the dashed blue line in Cell \uppercase\expandafter{\romannumeral4} from Figure \ref{fig:saddlepoint}) to Cell \uppercase\expandafter{\romannumeral2}. And we get a line segment in Cell \uppercase\expandafter{\romannumeral2}, from which the trajectory would converge to the saddle NE, see the solid blue line in Cell \uppercase\expandafter{\romannumeral2} from Figure \ref{fig:saddlepoint}. We note that this line segment also divides Cell \uppercase\expandafter{\romannumeral2} into two parts, and the trajectory in one side cannot cross the blue line, either. Similarly, we can get the other solid blue line in Cell \uppercase\expandafter{\romannumeral4}.

\begin{figure}[htbp]
	\centering
	\includegraphics[width=0.8\linewidth]{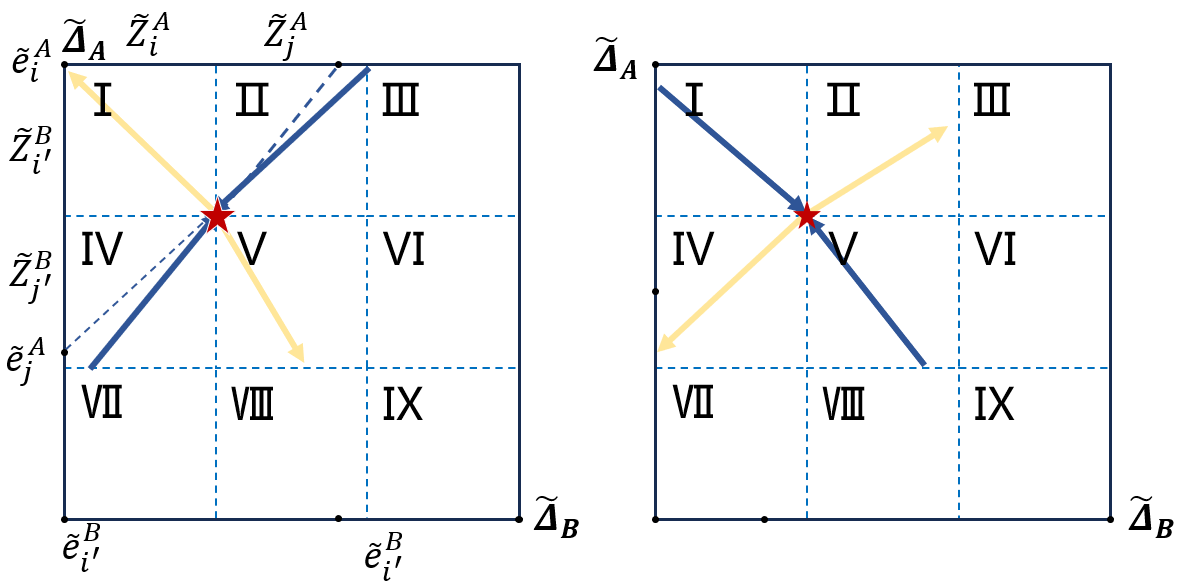}
	\caption{Two cases of saddle NE: in the left subgraph, in Cell \uppercase\expandafter{\romannumeral1} and \uppercase\expandafter{\romannumeral5} the trajectory would move away from the saddle NE; in Cell \uppercase\expandafter{\romannumeral2} and \uppercase\expandafter{\romannumeral4}, the trajectory moves toward the saddle NE. The right subgraph is contrary to the left.}
	\label{fig:saddlepoint}
\end{figure}

Now, suppose that there exists an initial point from which the trajectory is not convergent. Then by Lemma \ref{lem:2xn}, along the trajectory each player will play all the three action infinitely. 

Consider all the locations of initial point below.

(1) If the initial point belongs to the upper left area of the solid blue lines, which is also inside Cell \uppercase\expandafter{\romannumeral1}, \uppercase\expandafter{\romannumeral2}, \uppercase\expandafter{\romannumeral4}, by the direction of the trajectory, it would stay in this area, hence along the trajectory, each player would play at most two action infinitely.

(2) We study the case when initial point is in the other area. 

If the trajectory enters Cell \uppercase\expandafter{\romannumeral4}, since it cannot cross the solid blue line, the trajectory must enter Cell \uppercase\expandafter{\romannumeral5}.

If it enters Cell \uppercase\expandafter{\romannumeral5}, since it cannot cross the yellow line, it must enter Cell \uppercase\expandafter{\romannumeral8}.

Starting from Cell \uppercase\expandafter{\romannumeral8}, in order to excite all the three actions for each player, the trajectory needs to enter Cell \uppercase\expandafter{\romannumeral9}. 

For the same reason, 
the trajectory from Cell \uppercase\expandafter{\romannumeral9} needs to enter Cell \uppercase\expandafter{\romannumeral6};  from Cell \uppercase\expandafter{\romannumeral6} it needs to enter Cell \uppercase\expandafter{\romannumeral3}; from Cell \uppercase\expandafter{\romannumeral3} it needs to enter Cell \uppercase\expandafter{\romannumeral2}.

From Cell \uppercase\expandafter{\romannumeral2}, the trajectory must enter Cell \uppercase\expandafter{\romannumeral5}, by the direction. Now the trajectory is located in the upper right side of the yellow line in Cell \uppercase\expandafter{\romannumeral5}. Since it could not cross the yellow line, the trajectory can only enter Cell \uppercase\expandafter{\romannumeral6}, \uppercase\expandafter{\romannumeral8} or \uppercase\expandafter{\romannumeral9}. By the analysis above, the trajectory from any cell of them would again return to Cell \uppercase\expandafter{\romannumeral5}.

Hence in the long run, the trajectory would stay in at most 6 cells on the right. Thus one player can only play at most two actions.  

Therefore, we have proved that from all the initial points, at least one player would not play all his three actions along the trajectory infinitely. And we get our conclusion. 
\end{proof}

Combining Theorem \ref{thm:2-mixed} and Proposition \ref{pro:converge_multi_NE}, we can finally get our main result as Theorem \ref{thm:main-result}.

Moreover, Proposition \ref{pro:converge_multi_NE} tells us more about the property of NE. In fact, we could accordingly judge whether the convergent point is saddle or sink just based on the trajectory of FP.
\begin{corollary}
	For the game without IIP, after long enough steps, if 
	\begin{enumerate}
		\item FP trajectory only has one action profile, then $ (\mathbf{x}(t),\mathbf{y}(t)) $ converges to a pure NE.
		\item FP trajectory switches between two action profiles, then $ (\mathbf{x}(t),\mathbf{y}(t)) $ converges to a saddle NE.
		\item FP trajectory switches among four action profiles, then $ (\mathbf{x}(t),\mathbf{y}(t)) $ converges to a sink NE.
	\end{enumerate}
\end{corollary}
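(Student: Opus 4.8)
The plan is to pass to the projected dynamics and read off the long-run action profiles directly from the cells that the projected orbit visits. By Theorem \ref{thm:main-result} the FP trajectory converges to some Nash equilibrium $(\mathbf{x}^{*},\mathbf{y}^{*})$, and by the argument inside Proposition \ref{pro:loc-NE} every such equilibrium is either a pure profile or a mixed profile of support size $2\times 2$; in the latter case Definition \ref{def:saddle-sink} labels it a saddle or a sink. The bridge to the statement is the observation that the realized profile is decided by best responses to the empirical play: $(\mathbf{a}(t),\mathbf{b}(t))=(e^{A}_{k},e^{B}_{j})$ exactly when $(\mathbf{x}(t),\mathbf{y}(t))\in Z^{B}_{j}\times Z^{A}_{k}$, which by Proposition \ref{pro1:project=origin}(1) holds iff $\varphi(\mathbf{x}(t),\mathbf{y}(t))$ lies in the cell $\tilde{Z}^{B}_{j}\times\tilde{Z}^{A}_{k}$. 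Hence the number of profiles recurring infinitely often equals the number of cells of $\tilde{\Delta}$ the projected orbit visits infinitely often, and it suffices to match each equilibrium type with this count and then invert, since the three types are mutually exclusive and the counts will turn out to be the distinct values $1,2,4$.

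For the forward matching I would argue by the type of the limit. If the limit is a pure NE $(e^{A}_{i},e^{B}_{j})$, then by nondegeneracy (Remark \ref{rmk:degenerate}) the vertices lie in the interiors of unique best-response regions, so $\varphi(e^{A}_{i},e^{B}_{j})$ sits in the interior of a single cell; the orbit is eventually trapped there and records one profile. If the limit is mixed, Proposition \ref{pro:loc-NE} places it at a vertex of Cell \uppercase\expandafter{\romannumeral5}, where four cells meet and both players must genuinely mix, so both indifferent lines through that vertex are crossed infinitely often. Two visited cells cannot then be edge-adjacent (that would freeze one player's action and, by nondegeneracy, preclude a true $2\times2$ NE limit), so a two-profile orbit must occupy a \emph{diagonal} pair of cells; this is exactly the saddle picture of Figure \ref{fig:two-class-2x2-ne}, where the two approaching cells are diagonal and the switches are synchronized. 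For a sink the four local vector fields circulate about the vertex, so a convergent orbit winds around it and each shrinking loop meets all four cells, giving four recurring profiles. Inverting: one profile forces a single cell and hence a pure limit; two profiles force a diagonal pair and a mixed limit that cannot be a sink (which would produce four), hence a saddle; four profiles force a mixed limit that cannot be a saddle, hence a sink.

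The delicate steps are the two mixed cases, where I would reuse the separatrix analysis already developed for Theorem \ref{pro:converge_multi_NE} and Figure \ref{fig:saddlepoint}. For the saddle I must show that once the orbit is near the equilibrium it remains in the two diagonal stable cells and never leaks into the unstable pair: this is precisely the content of the invariant yellow/blue lines of Figure \ref{fig:saddlepoint}, which no trajectory can cross, and which force the two players to switch in lockstep so that only the diagonal pair of profiles is recorded. For the sink I must show the orbit truly circulates through all four cells infinitely rather than settling into fewer; this follows because in each of the four cells the flow points into the next cell around the vertex, so rotation cannot terminate while the orbit converges. Establishing this four-profile/two-profile dichotomy from the local rotational versus hyperbolic structure, together with the nondegeneracy check that excludes the spurious adjacent-pair scenario, is the main obstacle; everything else is bookkeeping through $\varphi$ via Proposition \ref{pro1:project=origin}.
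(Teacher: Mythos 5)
Your proposal is correct and follows essentially the same route as the paper: the paper states this corollary without a separate proof, as an immediate consequence of Theorem \ref{pro:converge_multi_NE} and the surrounding machinery (the cell--profile correspondence under $\varphi$ from Proposition \ref{pro1:project=origin}, Proposition \ref{pro:loc-NE} on equilibrium locations, and the local saddle/sink vector fields of Figures \ref{fig:two-class-2x2-ne} and \ref{fig:saddlepoint}), which is exactly the argument you reconstruct. Your fleshing-out of the forward matching (pure $\Rightarrow$ one cell, saddle $\Rightarrow$ the two stable diagonal cells, sink $\Rightarrow$ rotation through all four cells) and the inversion via exhaustiveness of the three equilibrium types is sound and, if anything, more explicit than what the paper provides.
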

When there are multiple NEs, the initial point would determine to which one FP would converge. We note that the second case happens with lowest probability. In fact, it requires that the initial point should belong to one of the stable manifolds and those two manifolds should lie on the same straight line. Hence  not only the initial point takes zero measurement in $ \tilde{\Delta} $, but the payoff matrix itself also takes zero measurement in the space of $ 3\times 3 $ matrices. 

\section{Further Discussion}\label{sec4:further}
\subsection{Comparison}\label{sec4.1}
To the best of our knowledge, the latest class of $ 3\times 3 $ games with FPP is supermodular/quasi-supermodular (\cite{bergerTwoMoreClasses2007}). In this section, we will compare our convergent class with it. It turns out that there exists a game with $\bar{\mathbf{x}}\notin\Delta_A, \bar{\mathbf{y}}\notin\Delta_B$ which is not quasi-supermodular, and there exists a quasi-supermodular game, whose indifferent point belongs to mixed-strategy space. 

\begin{definition}[\cite{bergerTwoMoreClasses2007}]\label{def:supermodular}
	A game $(A,B)$ is called
	\begin{enumerate}
		\item quasi-supermodular, if for all
		$i< i^{\prime }$ and $j< j^{\prime }$:
		$$a_{i'j}>a_{ij}\Longrightarrow a_{i'j'}>a_{ij'}\quad and\quad b_{ij'}>b_{ij}\Longrightarrow b_{i'j'}>b_{i'j}.$$
		\item supermodular, if for all $i< i^{\prime }$ and $j< j^{\prime }:$
		$$a_{i'j'}-a_{ij'}>a_{i'j}-a_{ij}\quad and\quad b_{i'j'}-b_{i'j}>b_{ij'}-b_{ij}.$$
	\end{enumerate}
\end{definition}

\begin{example}\label{eg:quasi-}
	\[ A=\begin{bmatrix}
		\frac{1}{3} & \frac{1}{3} & 0\\[1.5ex]
		0 & \frac{2}{3} & \frac{2}{3}\\[1.5ex]
		-\frac{2}{3} & 0 & 1
	\end{bmatrix},  B=\begin{bmatrix}
		0 & \frac{2}{3} & \frac{1}{3}\\[1.5ex]
		-\frac{2}{3} & 0 & \frac{1}{3}\\[1.5ex]
		1 & \frac{2}{3} & 0
	\end{bmatrix}. \]
\end{example}
We can see the indifferent point of $ (A,B) $ is $\bar{\mathbf{x}}=(-\frac{1}{2}, \frac{5}{6}, \frac{2}{3}), \bar{\mathbf{y}}=(\frac{5}{6}, -\frac{1}{2}, \frac{2}{3})^{T} $. Hence it falls into the class of games without IIP identified in this paper. But according to Definition \ref{def:supermodular}, $A$ satisfies the condition of quasi-supermodular game but $B$ does not. 

However, as we mentioned before, even the payoff matrices of two games having totally different elements, they could induce the same dynamics under FP with identical initial point. To give a more strict criterion, we consider two equivalence concepts. The first is linearly equivalent, which is often used in analyzing the game dynamics (\cite{ostrovski2014payoff,chen2022convergence}).

\begin{definition}(\cite{ostrovski2014payoff})
	Two $m\times n$ games $( A, B)$ and $(\tilde{A},\tilde{B})$ are linearly equivalent, if for $ i=1,2,\cdots,m,\ j=1,2,\cdots,n $ the payoff matrix $ A=(a_{ij}) $ and $\tilde{A}=(\tilde{a}_{ij})$ for Player A satisfy
	\[ \tilde{a}_{ij}=c\cdot a_{ij}+c_{j}, \]
	and the payoff matrix $ B=(b_{ij}) $ and $\tilde{B}=(\tilde{b}_{ij})$ for Player B satisfy
	\[ \tilde{b} _{ij}= d\cdot b_{ij}+ d_i, \]
	where $ c,d $ are positive, and $ c_{j},d_{i}\in\mathbb{R} $.
\end{definition} 

The second equivalence relationship is to exchange the rows or columns of payoff matrices $(A,B)$ simultaneously. It is also natural, and we have the following notation.
\begin{definition}
	For $m\times n$ games $(A,B)$, given permutations $\sigma:\{1,\cdots,n\}\to \{1,\cdots,n\}$ and $\tau:\{1,\cdots,m\}\to\{1,\cdots,m\}$, $(A^{\prime},B^{\prime})$ is said to be equivalent to $(A,B)$ under $(\sigma,\tau)$, if
	\[ A_{ij} = A^{\prime}_{\sigma(i)\tau(j)}, B_{ij} = B^{\prime}_{\sigma(i)\tau(j)}. \]
\end{definition}

It is very easy to check that 
\begin{proposition}
	If two games $(A,B)$ and $(A^{\prime}, B^{\prime})$ are linearly equivalent, then
	\begin{enumerate}
		\item $ \operatorname{BR}_{A}=\operatorname{BR}_{A^{\prime}},\ \operatorname{BR}_{B}=\operatorname{BR}_{B^{\prime}}$.
		\item $(A,B)$ is (quasi-)supermodular iff $(A^{\prime}, B^{\prime})$ is (quasi-)supermodular.
	\end{enumerate}
\end{proposition}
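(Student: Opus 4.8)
The plan is to exploit the asymmetric index-dependence built into the definition of linear equivalence: for Player A each column is shifted by a constant $c_{j}$ depending only on the column (opponent) index, while every entry is scaled by the common positive factor $c$; symmetrically, for Player B the shift $d_{i}$ depends only on the row index and the scaling is $d>0$. Both conclusions will follow once I observe that every quantity appearing in the best-response map and in the two modularity conditions compares entries of $A$ \emph{within a single column} and entries of $B$ \emph{within a single row} --- precisely the comparisons in which the additive constants cancel and the positive scalings drop out.

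For part (1) I would compute the transformed payoff directly. Writing $A'$ for the transformed matrix, for any $\mathbf{y}\in\Delta_{B}$ we have $(A'\mathbf{y})_{i}=\sum_{j}(c\,a_{ij}+c_{j})y_{j}=c\,(A\mathbf{y})_{i}+\sum_{j}c_{j}y_{j}$. The term $\sum_{j}c_{j}y_{j}$ is independent of $i$, so the vector $\big((A'\mathbf{y})_{i}\big)_{i}$ is obtained from $\big((A\mathbf{y})_{i}\big)_{i}$ by a single positive affine map ($c>0$) applied uniformly across all rows; hence the set of maximizers over $i$ is unchanged, and since the tie-breaking rule depends only on this set, $\operatorname{BR}_{A'}(\mathbf{y})=\operatorname{BR}_{A}(\mathbf{y})$. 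The identical computation $(\mathbf{x}^{T}B')_{j}=d\,(\mathbf{x}^{T}B)_{j}+\sum_{i}d_{i}x_{i}$, whose added term is independent of $j$ and whose scaling is $d>0$, gives $\operatorname{BR}_{B'}=\operatorname{BR}_{B}$.

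For part (2) I would verify that each defining inequality is invariant. For supermodularity the relevant quantity for A is the cross-difference $a'_{i'j'}-a'_{ij'}=c(a_{i'j'}-a_{ij'})$, where $c_{j'}$ cancels because both entries lie in column $j'$; likewise $a'_{i'j}-a'_{ij}=c(a_{i'j}-a_{ij})$, so after dividing by $c>0$ the inequality $a'_{i'j'}-a'_{ij'}>a'_{i'j}-a'_{ij}$ reduces to the same one for $A$. The $B$-condition is handled symmetrically, with $d_{i}$ and $d_{i'}$ cancelling because the compared entries share a row. For quasi-supermodularity I would note that $a'_{i'j}>a'_{ij}\iff a_{i'j}>a_{ij}$ (again $c_{j}$ cancels and $c>0$), and identically in column $j'$, so the ordinal implication $a_{i'j}>a_{ij}\Rightarrow a_{i'j'}>a_{ij'}$ transfers verbatim; the $B$-condition uses the corresponding same-row cancellation of $d_{i}$. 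Because $c,d>0$, the inverse transformation has the same form, which supplies the ``iff'' direction in both cases.

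The content is genuinely routine, so the only point I would be careful about is the \emph{matching of indices}: the argument works precisely because the shift for $A$ is indexed by the column --- so it cancels in the same-column comparisons demanded by both modularity conditions and by maximization of $(A\mathbf{y})_{i}$ over rows --- and the shift for $B$ is indexed by the row. A shift depending on the ``wrong'' index would break every step, so I would state this alignment explicitly and record, once, the two facts that drive everything: a positive affine map with a summand-independent shift preserves both the arg-max set and all strict inequalities among the transformed quantities.
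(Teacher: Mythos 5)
Your proof is correct and complete; the paper itself offers no argument for this proposition (it merely remarks ``It is very easy to check''), and your verification --- the summand $\sum_j c_j y_j$ being row-independent so the arg-max is preserved, and the same-column/same-row cancellation of $c_j$ and $d_i$ in the (quasi-)supermodularity inequalities --- is exactly the routine check the paper is implicitly relying on. Your closing observation that the index alignment (column-indexed shifts for $A$, row-indexed for $B$) is what makes everything cancel is a worthwhile point the paper leaves unsaid.
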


As a result, if we have found a game without IIP which is not quasi-supermodular, then any game which is linearly equivalent to it is not quasi-supermodular, either. So we only need to consider the second equivalence. By enumeration, there are only two permutations that can make $B$ satisfy the condition in Definition \ref{def:supermodular}. The first is to take $\sigma(1,2,3)=(3,1,2)$ to exchange the row, and under $\sigma$, we have
\[ A_{\sigma} = \begin{bmatrix}
-\frac{2}{3} & 0 & 1\\[1.5ex]
\frac{1}{3} & \frac{1}{3} & 0\\[1.5ex]
0 & \frac{2}{3} & \frac{2}{3}\\
\end{bmatrix}, \quad B_{\sigma} = \begin{bmatrix}
1 & \frac{2}{3} & 0\\[1.5ex]
-\frac{2}{3} & 0 & \frac{1}{3}\\[1.5ex]
0 & \frac{2}{3} & \frac{1}{3}\\
\end{bmatrix}. \]
The other is to take $\sigma(1,2,3)=(2,1,3)$ to exchange the row, and to take $\tau(1,2,3)=(3,2,1)$ to exchange the column, then we have 
\[ A_{\sigma\tau}=\begin{bmatrix}
	\frac{2}{3} & \frac{2}{3} & 0\\[1.5ex]
	0 & \frac{1}{3} & \frac{1}{3}\\[1.5ex]
	1 & 0 & -\frac{2}{3}\\
\end{bmatrix},  
B_{\sigma\tau}=\begin{bmatrix}
	\frac{1}{3} & 0 &-\frac{2}{3} \\[1.5ex]
	\frac{1}{3} & \frac{2}{3} & 0\\[1.5ex]
	0 & \frac{2}{3} & 1
\end{bmatrix}. \]
We can check that both $A_{\sigma}$ and $A_{\sigma\tau}$ do not satisfy the condition of quasi-supermodular. Hence Example \ref{eg:quasi-} demonstrates that the class of games without IIP is indeed different from the game class with FPP identified in the latest work.

\subsection{Degenerate Game}
So far we only consider the nondegenerate game, which means for any pure strategy of each player there is a unique best response. Although almost all games are nondegenerate, and the analysis about FP mainly constrain on this kind in the literature (\cite{bergerFictitiousPlayGames2005,bergerTwoMoreClasses2007}), it turns out that degenerate game still plays an important role. For instance, there is some degenerate game in which no learning algorithm can converge to its NE (\cite{milionisImpossibilityTheoremGame2023}).

However, for $ 3\times 3 $ games without IIP, degeneracy does not affect the convergency of FP. And our method is still feasible for analysing degenerate games. To show this, we provide some examples in this section. 

\begin{example}\label{eg:degenerate}
	The first group of games $\big(A(k),B\big)$ have payoff matrices with parameter $k\in(-2,+\infty)$ as
	\begin{align}
		A = \begin{bmatrix}
			0     & \frac{1}{3}    & -1 \\[1.5ex]
			k+\frac{2}{3}    & 0     & -\frac{1}{3}     \\[1.5ex]
			k     & \frac{4}{3}    & 0
		\end{bmatrix},\quad B = \begin{bmatrix}
			0      & -\frac{4}{3}      & \frac{2}{3}      \\[1.5ex]
			-\frac{2}{3}  & 0      & -2      \\[1.5ex]
			\frac{1}{3}      & 0      & 0
		\end{bmatrix}.
	\end{align}
\end{example}

For all $ k\in(-2,+\infty) $, $ \bar{\mathbf{x}}\notin\Delta_{A}, \bar{\mathbf{y}}\notin\Delta_{B} $. And it is easy to check that when $ k\neq-\frac{2}{3} $, the game is nondegenerate and admits one uniqe NE. But when $k=-\frac{2}{3}$, we can see that once Player B chooses his first action, then Player A's first and second actions are both optimal. Hence $ \big(A(-\frac{2}{3}),B\big) $ is a degenerate game. Meanwhile, one can find that for any $x\in[0,1]$, $\big((x,1-x,0),e^{B}_{1}\big) $ are all NEs, and there is no other NE. Hence the set of NEs now is a continuum. 

For the degenerate game $ \big(A(-\frac{2}{3}),B\big) $, since $ \bar{\mathbf{x}}\notin\Delta_{A}, \bar{\mathbf{y}}\notin\Delta_{B} $ we can still conduct the same analysis with the projection mapping $ \varphi $. And we can see that the set of NEs is projected to the left edge of Cell \uppercase\expandafter{\romannumeral5}, see the second subgraph in Figure \ref{fig:burficationcontinuum}. We can still draw the vector field of PBRD in $ \tilde{\Delta} $, and we can see that the trajectory surrounds and approaches the set of NEs, but will never converge to any point in the set. Figure \ref{fig:cfpdegenerate} gives a simulation of CFP for game $ \big(A(-\frac{2}{3}),B\big) $.

\begin{figure}[htbp]
	\centering
	\includegraphics[width=1\linewidth]{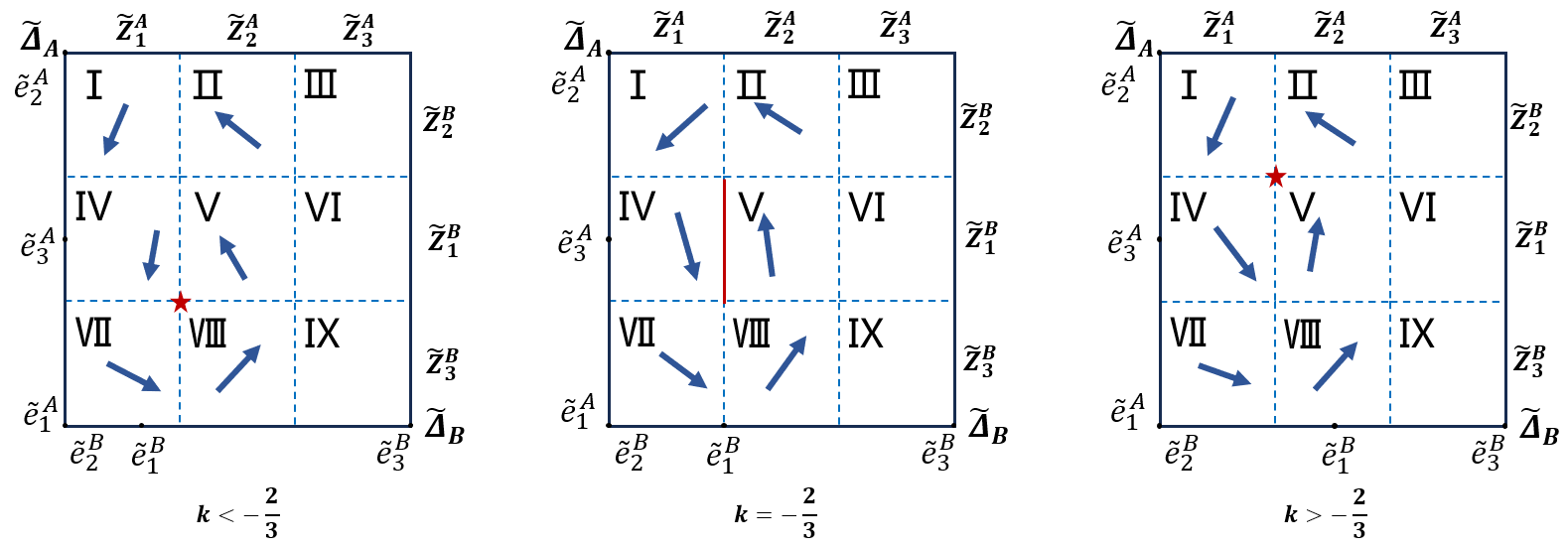}
	\caption{Regime shift of CFP dynamics in Example \ref{eg:degenerate}: when $ k<-\frac{2}{3} $, there exists an unique NE; as $ k $ increases to $ -\frac{2}{3} $, the NEs constitute a continuum and the vector field indicates the dynamics would approach to the continuum but does not converge to any point in it; when $ k>-\frac{2}{3} $, there exists an unique NE again.}
	\label{fig:burficationcontinuum}
\end{figure}

\begin{figure}[htbp]
	\centering
	\includegraphics[width=0.8\linewidth]{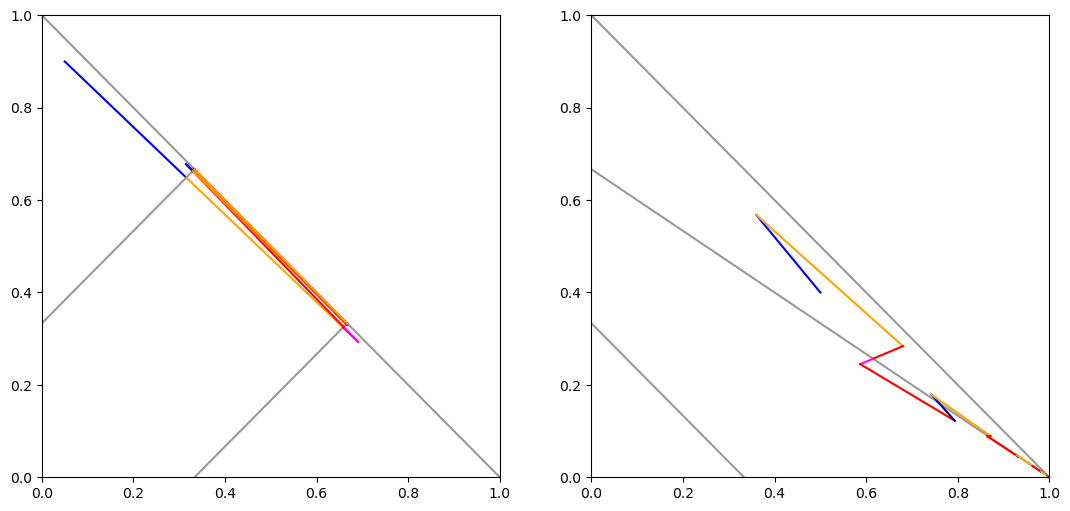}
	\caption{CFP in $\big(A(-\frac{2}{3}),B\big)$: the NE of this games is $\big((x,1-x,0),e^{B}_{1}\big), \forall x\in[0,1]$, and the trajectory approach the continuum of NE in $\Delta_{A}$, but does not approach any point of it.}
	\label{fig:cfpdegenerate}
\end{figure}

However, approaching to a continuum is not always the case for the degenerate game.
\begin{example}\label{eg:degenerate2}
	We consider another group of games $ \big(A(k),B\big) $ also with the parameter $ k\in(-2,+\infty) $. 
	\[ \begin{aligned}
		A = \begin{bmatrix}
			0     & \frac{2}{3}   & \frac{2}{3} \\[1.5ex]
			-\frac{2}{3}    & 0     & 1    \\[1.5ex]
			-\frac{2}{3}-k     & 1    & 0
		\end{bmatrix},\quad B = \begin{bmatrix}
			0      & 0      & -\frac{2}{3}      \\[1.5ex]
			\frac{1}{3}  & 0      & -\frac{2}{3}      \\[1.5ex]
			-1      & -\frac{1}{3}      & 0
		\end{bmatrix}.
	\end{aligned} \]
\end{example}
When $ k=-\frac{2}{3} $, the game is degenerate since Player A's first and third action are both optimal against Player B's first action. And this time, the points from set $\{\big((x,0,1-x),e^{B}_{1}\big)\mid x\in[0,1]\}$ are all NEs.  Figure \ref{fig:burfication} shows the PBRD under mapping $ \varphi $. But different from Example \ref{eg:degenerate}, the dynamics near the continuum would converge to only one endpoint $(e^{A}_{1},e^{B}_{1})$ of it.

	\begin{figure}[htbp]
		\centering
		\includegraphics[width=1\linewidth]{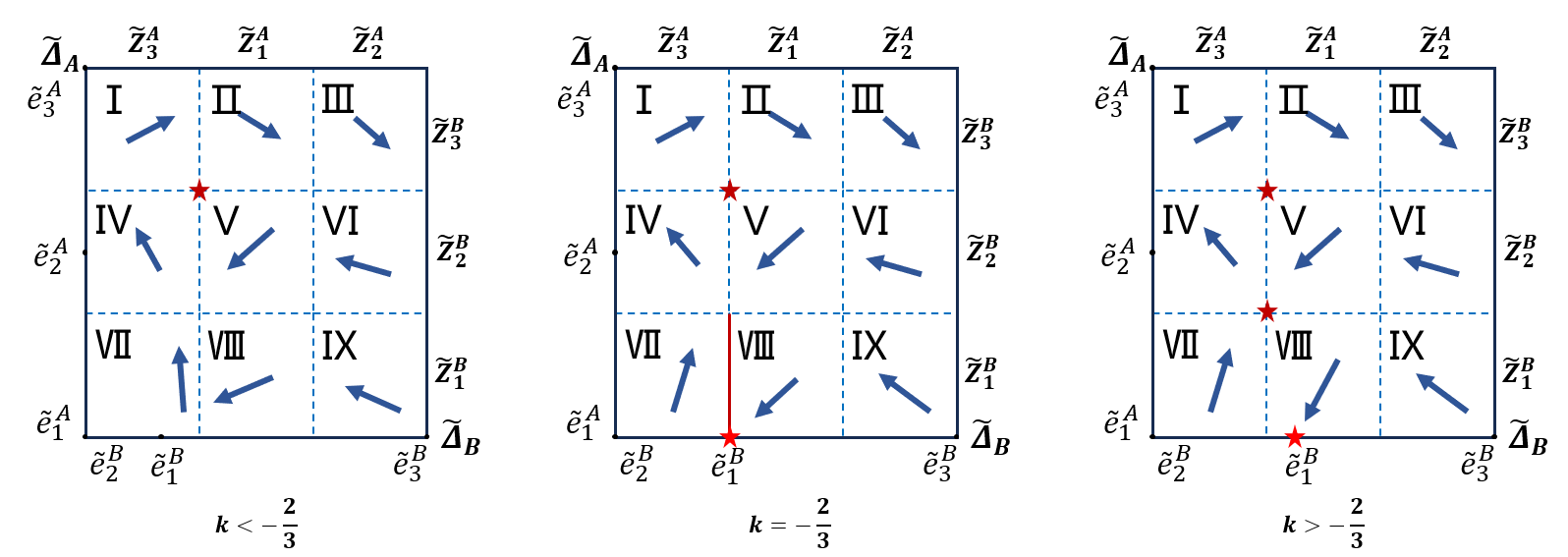}
		\caption{Regime shift of CFP dynamics in Example \ref{eg:degenerate2}. When $ k<-\frac{2}{3} $,  the game has an unique NE. As $ k $ increases to $ -\frac{2}{3} $, there are two components of NE, one is an isolated point, and the other is the edge of Cell \uppercase\expandafter{\romannumeral8}, i.e., a continuum. But the vector field in Cell \uppercase\expandafter{\romannumeral8} indicates that the dynamics will converge to only one endpoint of this edge. When $ k>-\frac{2}{3} $, the edge is no longer NE, and the game has three NEs. }
		\label{fig:burfication}
	\end{figure}

	Compare Example \ref{eg:degenerate2} and \ref{eg:degenerate}, we can see that although both games are degenerate and incorporate a continuum of NE when $k=-\frac{2}{3}$. But the dynamics nearby is completely different. Why? We note that in Example \ref{eg:degenerate}, the two endpoints of continuum are both sinks, one for $k<-\frac{2}{3}$ and the other for $k>-\frac{2}{3}$. We may view the continuum as the combination of sinks, hence it would attract the trajectory nearby. On the contrary, in Example \ref{eg:degenerate2} the lower endpoint is a sink but the upper endpoint is a saddle. As a result, all the trajectories would be attracted by the sink point, hence converge to one endpoint of the continuum. This may explain why the dynamics in these two examples are so different.
	
	On the other hand, we can view $ \big(A(k),B\big) $ as a path in the payoff matrix space $ (A,B)\in\mathbb{R}^{9}\times\mathbb{R}^{9} $ in both examples. Example \ref{eg:degenerate2} shows that the number of connected component of NE is not a homotopy invariant, which indicates the complexity of solving NEs. A small disturbance of $k$ near $-\frac{2}{3}$ in payoff matrix $A$ would even change the property of NEs dramatically.

\subsection{Higher-dimensional Games without IIP}
	In this section, we would continue our discussion about games with higher dimension. For simplicity, we suppose $ B=A^{T} $, and the initial value satisfies $ \mathbf{x}(t_{0})=\mathbf{y}(t_{0}) $. Hence $\operatorname{BR}_{A}=\operatorname{BR}_{B}$, and further the strategies $ \mathbf{x}(t) = \mathbf{y}(t), \forall t\geq t_{0} $. Thus the BRD could be simplified as 
	\[ \dot{\mathbf{x}}(t)=\operatorname{BR}_{A}\big(\mathbf{x}(t)\big)-\mathbf{x}(t). \]
	Now consider the following payoff matrix from \cite{hahnShapleyPolygonsGames2010}
	\begin{equation}\label{eg:4x4game}
		A=\begin{pmatrix}
			0 & -3 & \frac{3}{2} & 1 \\
			1 & 0 & -3 & -15 \\
			-15 & 1 & 0 & -1 \\
			-1 & -\frac{39}{20} & 1 & 0
		\end{pmatrix}. 
	\end{equation}
	The indifferent point of $ A $ is $ \bar{\mathbf{x}}=(-\frac{5}{2988}, \frac{185}{747}, \frac{1471}{1494}, -\frac{689}{2988})^{T} \notin \Delta_{A} $. Hence we can also define the projection mapping similar to $ 3\times3 $ games. To visualize the mapping, the mixed strategy space $\Delta_{A}$ can be represented as a three dimensional simplex $\{(x_{1},x_{2},x_{3})\mid 0\leq x_{i}\leq 1, \forall i\}$. In this example, $ \Delta_{A} $ is mapped onto one face $ \{(x_{1},x_{2},x_{3})\in\Delta_{A}\mid \sum_{i=1}^{3}x_{i}=1\} $. The Figure \ref{fig:4x4} shows the mapping in details.
	
	\begin{figure}[htbp]
		\centering
		\includegraphics[width=0.9\linewidth]{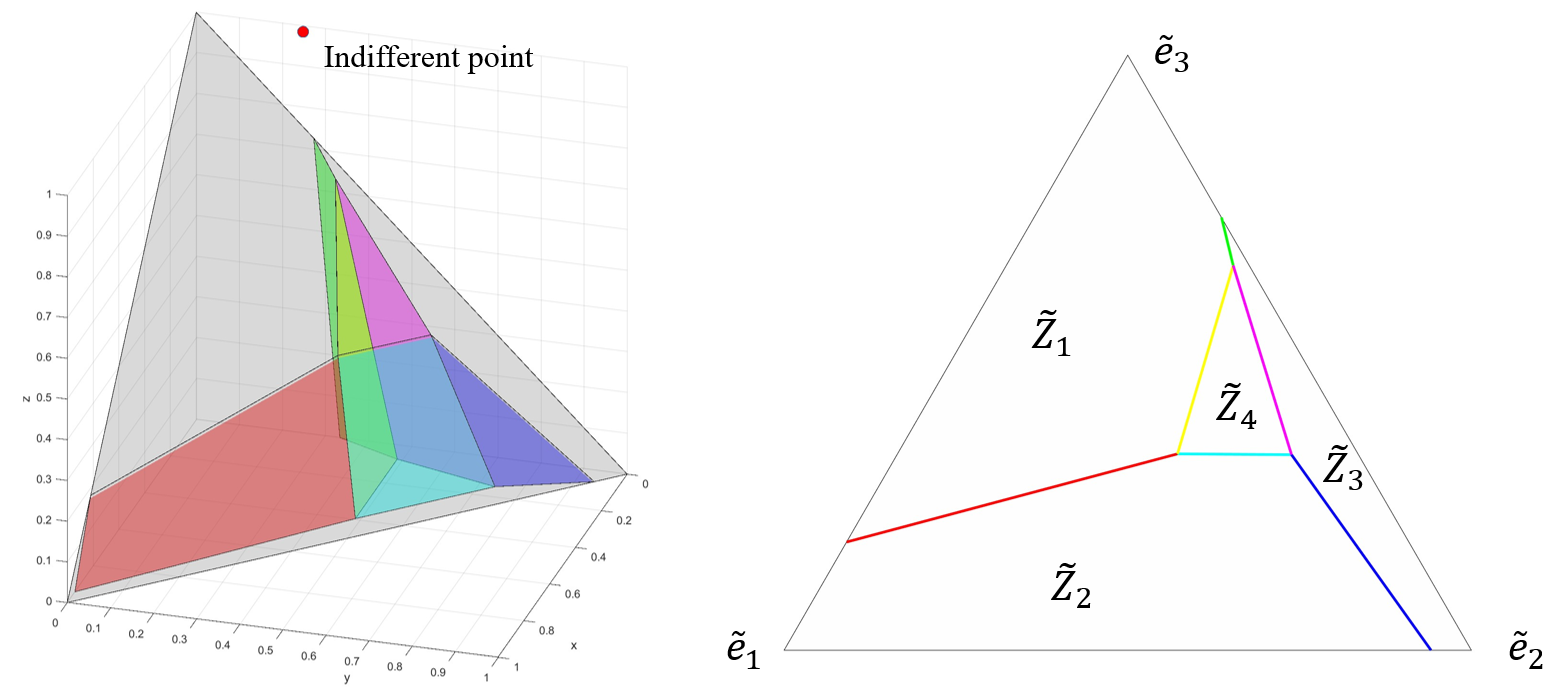}
		\caption{The original and projected strategy simplex space of Game (\ref{eg:4x4game}). In the left graph, the red point represents the indifferent point $\bar{\mathbf{x}}$. The gray plane is $ \{(x_{1},x_{2},x_{3})\in\Delta_{A}\mid \sum_{i=1}^{3}x_{i}=1\} $, and it is the face which we would project the whole simplex onto. The different planes, rather than indifferent lines, are colored with different colors. We preserve these colors in their the images under projection mapping in the right graph. And we also label each best response regions.}
		\label{fig:4x4}
	\end{figure}

	Under the projection mapping, we can understand the dynamics of CFP with much convenience. Specifically, We add the x-y axis to $\tilde{\Delta}$, and let $\tilde{e}_{1}$ to be the origin, and the side length is $\sqrt{2}$. The remaining vertex $ e^{4}=(0,0,0)^{T} $ is mapped to $ (-\frac{5}{3677}, \frac{740}{3677}, \frac{2942}{3677})^{T}\in\tilde{Z}^{A}_{1} $. Hence once the trajectory enters best response region for the forth action $ \tilde{Z}_{4}^{A} $, it would leave $ \tilde{Z}_{4}^{A} $ and enter $ \tilde{Z}^{A}_{1} $. Hence we can let the trajectory starts from $(0.8094,0.4041)$ in the red indifferent line, which is the left lower vertex of $\tilde{Z}_{4}$. The calculation shows that in $\tilde{\Delta}_{A}\textbackslash\tilde{Z}_4$, the trajectory would follow a pattern $\tilde{Z}_{1}\to\tilde{Z}_{2}\to\tilde{Z}_{3}$. At the next time that the trajectory returns to the red line, the coordinate of the point in the red line is $(0.3335,0.2773)$, which is farther from $\tilde{Z}_{4}$ than the start point $(0.8094,0.4041)$. Hence once the trajectory leaves $\tilde{Z}_{4}$, it would never return. And we can further infer that FP does not converge to NE in this case. So by the geometrical approach, we can prove the non-convergence result of game (\ref{eg:4x4game}) from \cite{hahnShapleyPolygonsGames2010} in a simpler way.  
	
	In addition, this example also indicates that being without IIP is not a sufficient condition for convergence of FP dynamics in games with higher dimension.

\section{Conclusions}\label{sec5:conclusion}
	Fictitious play is one of the most fundamental learning dynamics for computing NE. And finding games having FPP is one of the key focuses in studying FP. In this paper, we identify a new class of games having FPP, i.e., $3\times3$ games without IIP, which is fully based on the geometrical property of the game, including the location of NE and the partition of best response region. During the process, we devise a new projection mapping to reduce a high-dimensional dynamical system to a planar system. And to overcome the non-smoothness of the systems, we redefine the concepts of saddle and sink NE, which are proven to exist and help prove the convergence of CFP by separating the projected space into two parts. Furthermore, we show that our projection mapping can be extended to higher-dimensional and degenerate games.

	Apparently, there are a lot of future work. For all the $3\times 3$ games, based on the geometrical classification in this paper, the last piece to the complete characterization is still lacking, i.e. the class of games with $\bar{\mathbf{x}}\notin \Delta_A$ but $\bar{\mathbf{y}}\in\Delta_B$. For this game class, the projection mapping may not be well-defined in $\Delta_B$. However, this paper can still provide some inspiration. By similar analysis, we can infer the structure of NE for such games, then the concepts and results on the saddle and sink NE in this paper can be applicable. 
	Another future work is to combine our method with \cite{bergerFictitiousPlayGames2005} to study $3\times m$ games. Based on the method in \cite{bergerFictitiousPlayGames2005}, we can project the $m$-dimensional strategy simplex into a $3$-dimensional space. Then the analysis in this paper can be applied and it is possible to find some more $3\times m$ game classes having FPP. In both problems, geometrical approach is still helpful while more advanced techniques may be needed, especially concerning the higher-dimensional games.


\appendix
\renewcommand{\thesection}{\appendixname~\Alph{section}}
\section{Connection among FP, CFP and BRD}\label{app1}
For the sake of theoretical completeness, we provide here the derivation of CFP in (\ref{equ:cfp-def}) from FP in (\ref{equ:DFP}). However, the same calculation can also be found in the appendix of \cite{chen2022convergence}.

Suppose $t\in[1,+\infty)$ increase by $\delta$, then according to (\ref{equ:DFP}), we have
\[ \begin{aligned}
	\mathbf{x}(t+\delta) &= \frac{t}{t+\delta}\mathbf{x}(t)+\frac{\delta}{t+\delta}\operatorname{BR}_{A}(\mathbf{y}(t)),\\
	\mathbf{x}(t+\delta)-\mathbf{x}(t)&=\frac{\delta}{t+\delta}\big(\operatorname{BR}_{A}(\mathbf{y}(t))-\mathbf{x}(t)\big)
\end{aligned} \]
Let $\delta\to0$, we now have
\[ \dot{\mathbf{x}}=\lim_{\delta\to0}\frac{\mathbf{x}(t+\delta)-\mathbf{x}(t)}{\delta}=\frac{1}{t}\big(\operatorname{BR}_{A}(\mathbf{y}(t))-\mathbf{x}(t)\big). \]
Hence we get the equation of CFP.

\section{Proof of Lemma \ref{thm:existence-of-indiff-point}}\label{app:existence-of-indiff-point}
According to the definition, $ (\bar{\mathbf{x}},\bar{\mathbf{y}})\in\mathbb{R}^{2n} $ is the indifferent point if and only if we can find some values $ c_{1}, c_{2}\in\mathbb{R} $, such that 
$$
A\begin{pmatrix}
	y_{1} \\
	y_{2} \\
	\vdots \\
	y_{n}    
\end{pmatrix}=c_{1}\mathbf{1}_{n},\quad \begin{pmatrix}
	x_{1}, x_{2}, \cdots, x_{n}
\end{pmatrix}B=c_{2}\mathbf{1}_{n}^{T},
$$ 
where $ \mathbf{1}_{n} $ is a column vector whose elements are $ 1 $. 

We consider $\bar{\mathbf{y}}$ only for simplicity. Since $ y_{n}=\sum_{i=1}^{n-1}y_{i} $, we can substitute it to the above equation. Let $ A=(a_{ij}) $ and treat $ c_1 $ as another variable apart from $ y_{1},\cdots, y_{n-1} $, we can get
\begin{equation}\label{equ:existence-of-indiff-point}
	\begin{pmatrix}
		a_{11}-a_{1n} & a_{12}-a_{1n} & \cdots & a_{1(n-1)}-a_{1n} & -1\\
		a_{21}-a_{2n} & a_{22}-a_{2n} & \cdots & a_{2(n-1)}-a_{2n} & -1\\
		\vdots        & \vdots        &        & \vdots            & \vdots\\
		a_{n1}-a_{nn} & a_{n2}-a_{nn} & \cdots & a_{n(n-1)}-a_{nn} & -1
	\end{pmatrix}\begin{pmatrix}
		y_{1} \\
		\vdots \\
		y_{n-1}  \\
		c_1  
	\end{pmatrix}=\begin{pmatrix}
		-a_{1n} \\
		-a_{2n} \\
		\vdots \\
		-a_{n}    
	\end{pmatrix}
\end{equation}
We denote the matrix in (\ref{equ:existence-of-indiff-point}) as $ \tilde{A} $. Hence, the existence and uniqueness of $ \bar{y} $ is ensured once $ \det(\tilde{A})\neq0 $.

For $3\times 3$ game, if $ \det(\tilde{A})=0 $, then the row vectors of $\tilde{A}$ are linearly dependent, which means that, $\exists \lambda_1,\lambda_2\neq 0$
\[ \begin{aligned}
	a_{11}-a_{13}&=\lambda_{1}(a_{21}-a_{23})+\lambda_{2}(a_{31}-a_{32}),\\
	a_{12}-a_{13}&=\lambda_{1}(a_{22}-a_{23})+\lambda_{2}(a_{32}-a_{32}),\\
	-1&=-\lambda_{1}-\lambda_{2}.
\end{aligned} \]
On the other hand, we can calculate the indifferent lines $l^{A}_{12}$ by
\[ \begin{aligned}
	a_{13}-a_{23} + (a_{11}-a_{13}-a_{21}+a_{23})y_{1} + (a_{12}-a_{13}-a_{22}+a_{23})y_{2}=0.
\end{aligned} \]
But $$
\begin{aligned}
	a_{11}-a_{13}-a_{21}+a_{23}&=\lambda_{1}(a_{21}-a_{23})+\lambda_{2}(a_{31}-a_{32})-a_{21}+a_{23},\\
	&=\lambda_{2}(a_{31}-a_{32})-\lambda_{2}(a_{21}-a_{23}).
\end{aligned}
$$
Hence $l_{12}^{A}$ is $$a_{13}-a_{23} + \lambda_{2}(a_{31}-a_{32}-a_{21}-a_{23})y_{1} + \lambda_{2}(a_{32}-a_{33}-a_{22}+a_{23})y_{2}=0.$$ And $l^{A}_{23}$ is
\[ a_{23}-a_{33} + (a_{21}-a_{23}-a_{31}+a_{33})y_{1} + (a_{22}-a_{23}-a_{32}+a_{33})y_{2}=0. \]
We can see these two lines are parallel to each other since their slopes are both $-(a_{22}-a_{23}-a_{32}+a_{33})/(a_{21}-a_{23}-a_{31}+a_{33})$. Similarly, we can prove $l_{13}^{A}$ is parallel to $l_{23}^A$.

\section{Proportion of Matrix without IIP}\label{app:proportion}
Here we give the evidence that the class of games studied in this paper exists widely. We generate $100,000$ matrices $A$ whose elements are random numbers drawn from the interval $[0,1)$ with a uniform distribution, and we check whether the indifferent point $\bar{\mathbf{y}}$ belongs to $\Delta_B$ by solving equation (\ref{equ:existence-of-indiff-point}) in \ref{app:existence-of-indiff-point}. The calculation results show that for most payoff matrices $A$, $\bar{\mathbf{y}}\notin\Delta_B$. Even for a $3\times 3$ matrix, the proportion is bigger than $\frac{3}{4}$. Moreover, as the dimension of payoff matrix increases, the proportion increases as well, see Figure \ref{fig:proportion_IIP}.

\begin{figure}[htbp]
	\centering
	\includegraphics[width=0.5\linewidth]{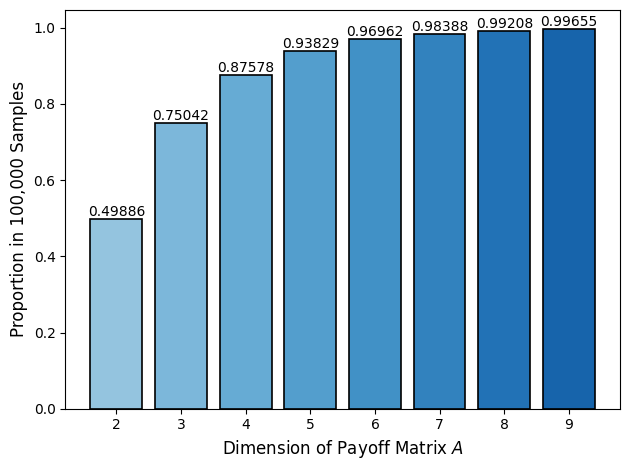}
	\caption{The Proportion of Matrix without IIP in $100,000$ Samples.}
	\label{fig:proportion_IIP}
\end{figure}

\section{Enumeration of different games}\label{app:enumeration}
In this section, we will distinguish different games according to the location of indifferent lines and the arrangement of best response regions. As indicated by Proposition \ref{pro:1}, for games without IIP, there are at most two indifferent lines in each strategy simplex. The location of them can be separated into the two cases in Figure \ref{fig:enum}.

\begin{figure}[ht]
	\centering
	\begin{subfigure}[c]{0.49\textwidth}
            \centering
		\includegraphics[width=0.55\linewidth]{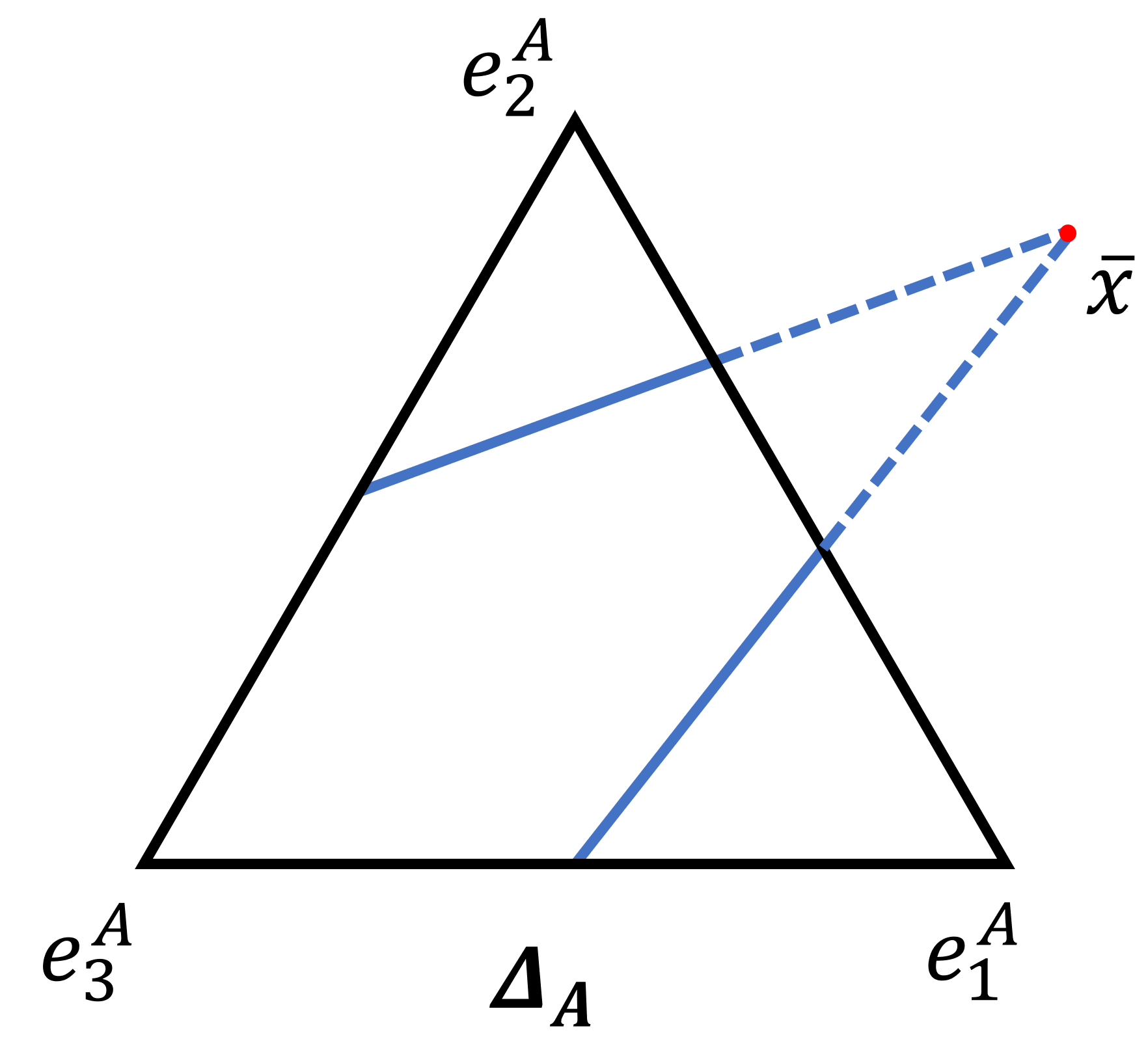}
		\caption{}
		\label{subfig:enum-a}
	\end{subfigure}
	\hfill
	\begin{subfigure}[c]{0.49\textwidth}
            \centering
		\includegraphics[width=0.55\linewidth]{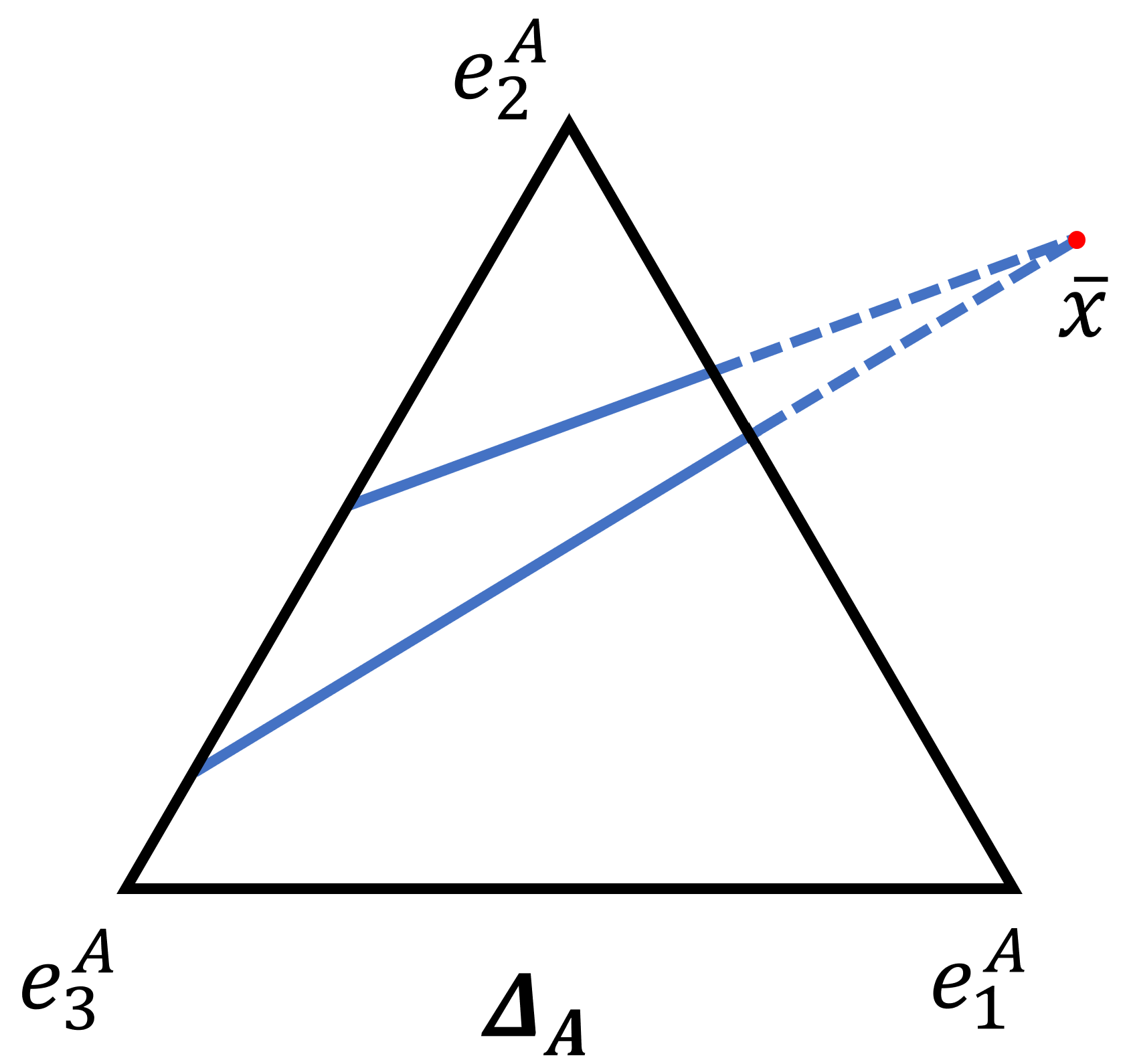}
		\caption{}
		\label{subfig:enum-b}
	\end{subfigure}
	\caption{Two cases for the location of indifferent lines\label{fig:enum}: in \cref{subfig:enum-a}, there is one edge of simplex having two intersection points with indifferent lines, and two edges having only one intersection points; in \cref{subfig:enum-b}, there are two edges of simplex having two intersection points with indifferent lines, but one edge having no intersection point.}
\end{figure}

The two indifferent lines would divide the strategy simplex into $3$ area, each corresponds to a best response regions. Figure \ref{fig:enumeration} shows the arrangement of best response regions, we only show the enumeration when the location of indifferent lines in each simplex $\Delta_{A}$ or $\Delta_{B}$ is demonstrated by Figure \ref{fig:enum}
\begin{figure}[htbp]
	\centering
	\includegraphics[width=1\linewidth]{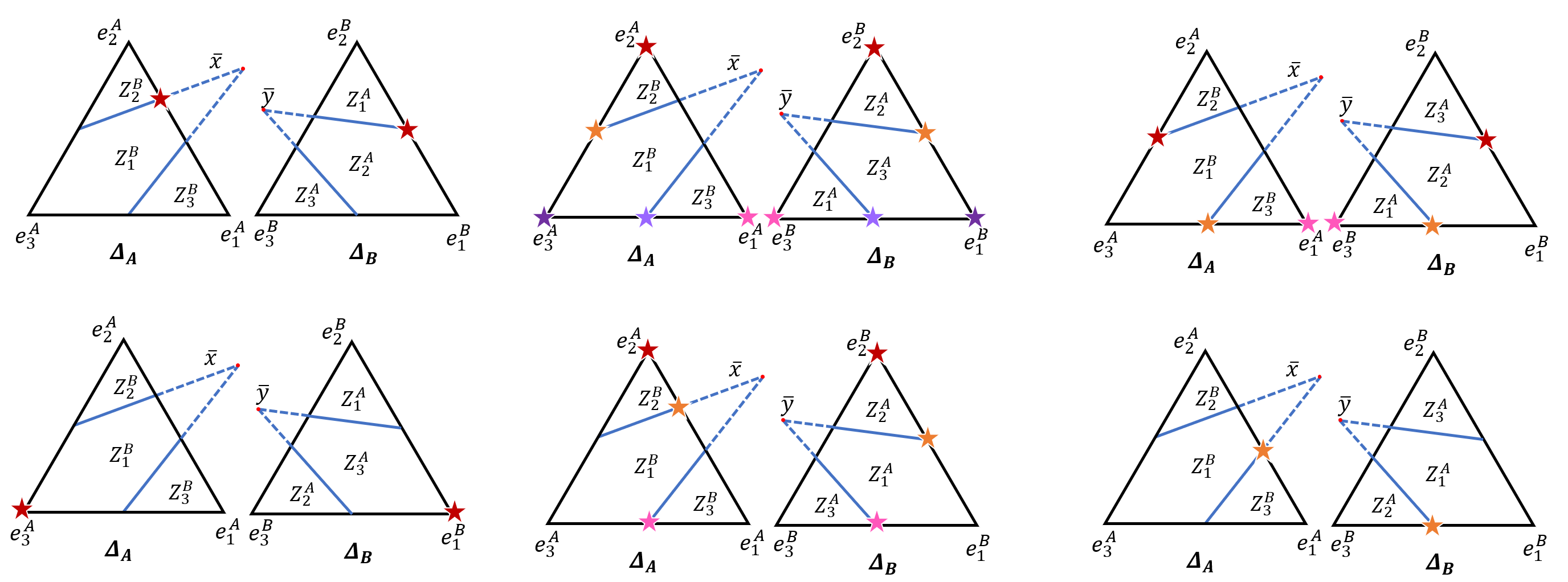}
	\caption{Enumeration of different arrangement of best response regions. Different colored strategy pairs indicate different NEs of the game.}
	\label{fig:enumeration}
\end{figure}

\section{The other case for Theorem \ref{thm:2-mixed}: Games with unique and pure NE}\label{app:PNE}
For games without IIP having an unique NE which is pure, we can prove that the only possible arrangement of best response regions is shown in Figure \ref{fig:proofpurene}. Here the only equilibrium is $ (e_{3}^{A},e_{1}^{B}) $, which belongs to $ Z_{1}^{B}\times Z_{3}^{A} $. If the trajectory starts from it, the best response is $ (e_{3}^{A},e_{1}^{B}) $, hence the trajectory would converge to NE directly. 

\begin{figure}[htbp]
	\centering
	\includegraphics[width=0.75\linewidth]{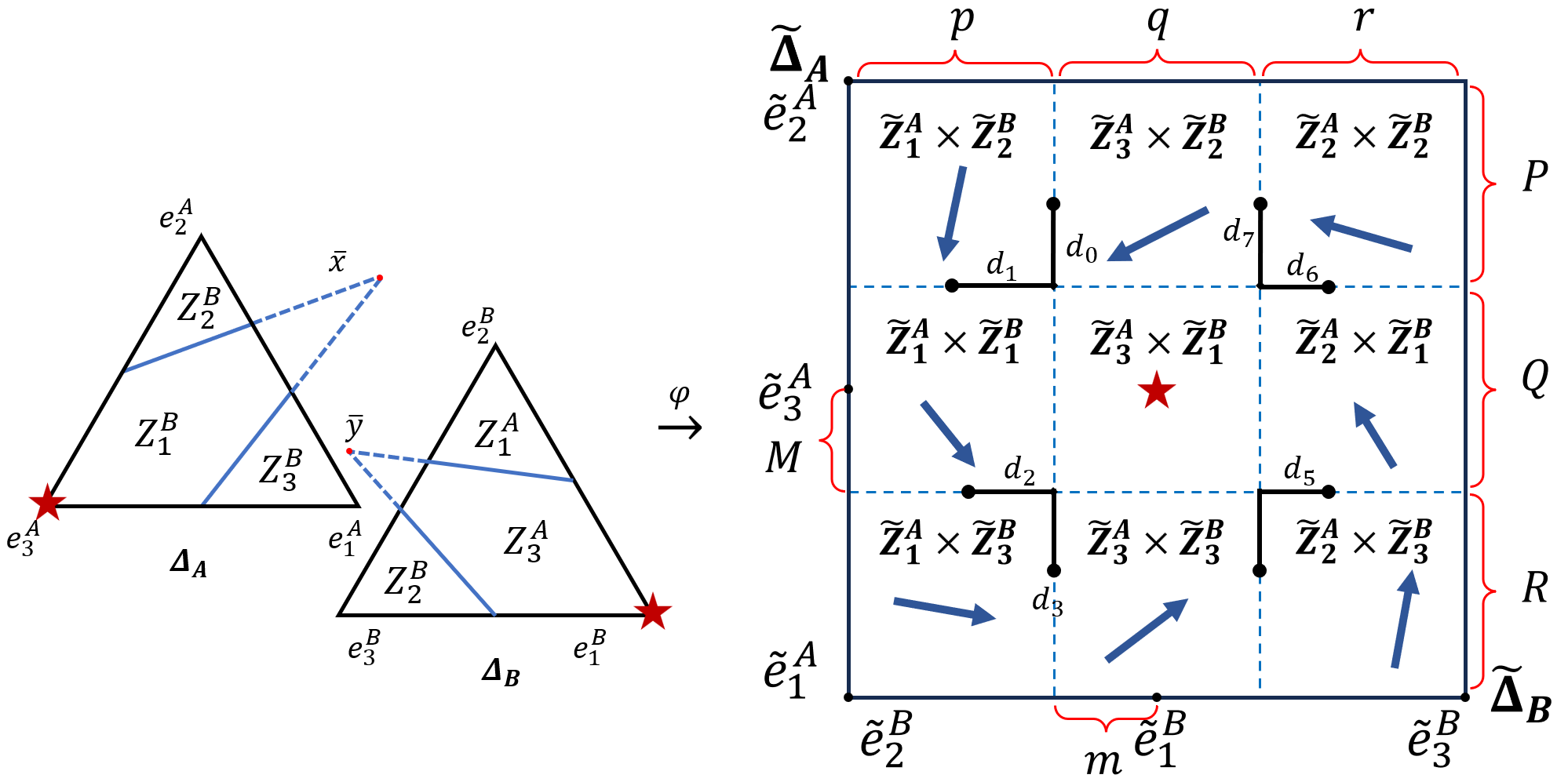}
	\caption{The game with unique and pure NE: the unique NE and its image under $ \phi $ are marked by red star in $ \Delta $ and $ \tilde{\Delta} $; the blue arrows show the vector field in each cell.}
	\label{fig:proofpurene}
\end{figure}

As a result, in order to prove the convergence of PBRD, we only need to prove the trajectory will enter $ \tilde{Z}^{B}_{3}\times\tilde{Z}_{1}^{A} $, or Cell \uppercase\expandafter{\romannumeral5}. First we can draw the vector field in each cell, from which we can see that if the PBRD trajectory never enters Cell \uppercase\expandafter{\romannumeral5}, it must go through all the other eight cells and surround Cell \uppercase\expandafter{\romannumeral5} endlessly. Next We will show that this is impossible.

To capture the distance of PBRD trajectory to Cell \uppercase\expandafter{\romannumeral5}, we define $ d $ as
\[ d(x,y) = \inf_{z}\{\Vert (x,y)-z \Vert_{2}\mid z\in \tilde{Z}^{B}_{3}\times\tilde{Z}_{1}^{A}\},\quad \forall (x,y)\notin  \tilde{Z}^{B}_{3}\times\tilde{Z}_{1}^{A}. \]
We mainly focus on the intersection points of the trajectory and the edges of the cells. Without loss of generality, let the trajectory start from the right edge of Cell \uppercase\expandafter{\romannumeral1}, i.e., $ \{(a,y)\mid Q+R<y\leq 1\} $. We denote the distance between the initial point and Cell \uppercase\expandafter{\romannumeral5} as $ d_{0} $, which is just $ y-(B+D) $.

Starting from that point, since the trajectory will never enter Cell \uppercase\expandafter{\romannumeral5}, it would follow a path of cells \uppercase\expandafter{\romannumeral1} $ \to $ \uppercase\expandafter{\romannumeral4} $ \to $ \uppercase\expandafter{\romannumeral7} $ \to $ \uppercase\expandafter{\romannumeral8} $ \to $ \uppercase\expandafter{\romannumeral9} $ \to $ \uppercase\expandafter{\romannumeral6} $ \to $ \uppercase\expandafter{\romannumeral3} $ \to $ \uppercase\expandafter{\romannumeral2} and return to Cell \uppercase\expandafter{\romannumeral1}. 
During this process, we can get a sequence of distance $ \{d_{n}\}, 0\leq n\leq8 $. 
$ d_{8} $ is the second time that the trajectory enters Cell \uppercase\expandafter{\romannumeral1}. 

Now we can define a Poincar\'e mapping $ f:d_{0}\mapsto d_{8} $, and it is the composition of $ 8 $ M\"obius transformation, denoted by $ \phi_{i}:d_{i}\mapsto d_{i+1}, i=0,\cdots, 7 $, and $ f=\phi_{7}\circ \phi_{6}\circ\cdots \phi_{0} $.

First, we point out that the four transformations $ \phi_{1},\phi_{3},\phi_{5},\phi_{7} $ satisfy $ d_{i+1}=\phi_{i}(d_{i})<d_{i} $, which correspond to the dynamics in Cell \uppercase\expandafter{\romannumeral4}, \uppercase\expandafter{\romannumeral8}, \uppercase\expandafter{\romannumeral6} and \uppercase\expandafter{\romannumeral2} respectively. For example, when the trajectory enters Cell \uppercase\expandafter{\romannumeral4}, $ \phi_{1} $ can be solved via similar triangle as 
\begin{equation}\label{equ:app-pne1}
	\frac{d_{2}+m}{d_{1}+m}=\frac{R}{Q+R}\Rightarrow d_{2}=\phi_{1}(d_{1})=\frac{R}{Q+R}d_{1}-\frac{Q}{Q+R}m<d_{1}.
\end{equation}
Similarly, 
\[ 	\begin{aligned}
	&\frac{d_{4}+M}{d_{3}+M}=\frac{r}{q+r}\Rightarrow d_{4}=\phi_{3}(d_{3})=\frac{r}{r+q}d_{3}-\frac{q}{r+q}M<d_{3},\\
	&\frac{d_{6}+q-m}{d_{5}+q-m}=\frac{P}{P+Q}\Rightarrow d_{6}=\phi_{5}(d_{5})=\frac{P}{P+Q}d_{5}-\frac{Q}{P+Q}(q-m)<d_{5},\\
	&\frac{d_{8}+Q-M}{d_{7}+Q-M}=\frac{p}{p+q}\Rightarrow d_{8}=\phi_{7}(d_{7})=\frac{p}{p+q}d_{7}-\frac{q}{p+q}(Q-M)<d_{7}.\label{equ:app-pne2}
\end{aligned} \]
Hence we have
\[ f(d_{0})=\phi_{7}\circ \phi_{6}\circ\phi_{5}\circ\phi_{4}\circ\phi_{3}\circ\phi_{2}\circ\phi_{1}\circ\phi_{0}(d_{0})<\phi_{6}\circ\phi_{4}\circ\phi_{2}\circ\phi_{0}(d_{0}). \]
For the other four M\"obius transformations, we have
\[ \begin{aligned}
	&\frac{d_{1}}{d_{0}}=\frac{p}{Q+R+d_{0}}\Rightarrow d_{1}=\phi_{0}(d_{0})=\frac{p\cdot d_{0}}{Q+R+d_{0}},\\
	&\frac{d_{3}}{d_{2}}=\frac{R}{q+r+d_{2}}\Rightarrow d_{3}=\phi_{2}(d_{2})=\frac{R\cdot d_{2}}{q+r+d_{2}},\\
	&\frac{d_{5}}{d_{4}}=\frac{r}{P+Q+d_{4}}\Rightarrow d_{5}=\phi_{4}(d_{4})=\frac{r\cdot d_{4}}{P+Q+d_{2}},\\
	&\frac{d_{7}}{d_{6}}=\frac{P}{p+q+d_{6}}\Rightarrow d_{7}=\phi_{6}(d_{6})=\frac{P\cdot d_{6}}{p+q+d_{6}}.
\end{aligned} \]
Together we have 
\[ \begin{aligned}
	f(d_{0})&<\phi_{6}\circ\phi_{4}\circ\phi_{2}\circ\phi_{0}(d_{0})\\
	&= \frac{p P r Rd_{0}}{(p + q) (P + Q) (q + r) (Q + R) + \ast}<d_{0}.
\end{aligned} \]
Here $ \ast $ represents a long item with respect to $ d_{0} $, and the last inequality holds because $ (p+q)(P+Q)(q+r)(Q+R)>p\cdot Q\cdot r\cdot R $.
Hence $ f(d_{0}) $ is contractive, and the trajectory would approach Cell \uppercase\expandafter{\romannumeral5}, i.e., $ f^{n}(d_{0})\to 0 $. 

When $n$ is large enough, $f^{n}(d_{0})$ is sufficiently small. Then according to the above equations of $\phi_{i},\ i=1,3,5,7$,  
there exists one $\phi_i$ 
such that $\phi_i(d_i)<0$, meaning the trajectory will no longer surround Cell \uppercase\expandafter{\romannumeral5} but enter it. And we get the convergent result.

\section{The other two cases in the proof of Proposition \ref{pro:exist-saddle}} \label{app:exist-saddle}
\begin{figure}[htbp]
	\centering
	\includegraphics[width=1\linewidth]{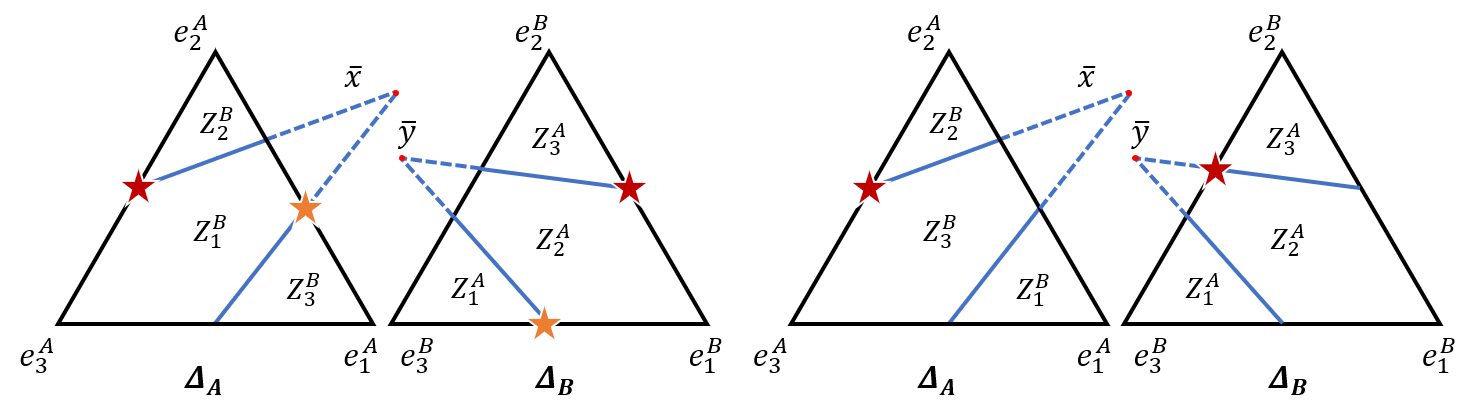}
	\caption{Different arrangements of best response regions when the game has a sink NE.}
	\label{fig:appendixsaddle}
\end{figure}

We now complete our proof of existence of saddle NE in Proposition \ref{pro:exist-saddle}. There are two cases remained,
\begin{enumerate}
	\item the game has a pure NE which lies on the vertex of $ \tilde{\Delta} $.
	\item the game has a mixed NE $(\mathbf{x},\mathbf{y})$ which is a sink.
\end{enumerate}
The proofs for the two cases are totally the same, and we only give the proof for the second case.

We suppose the upper left vertex of Cell \uppercase\expandafter{\romannumeral5} is the image of sink NE under $\varphi$. Then there are four cases, depending on $\mathbf{x}\in e^{A}_{2}e^{A}_{3}$ or $e^{A}_{1}e^{A}_{2}$ and $\mathbf{y}\in e^{B}_{1}e^{B}_{2}$ or $e^{B}_{2}e^{B}_{3}$. We note that when the location of $(\mathbf{x},\mathbf{y})$ is determined, the arrangement of best response regions is fixed to make sure $(\mathbf{x},\mathbf{y})$ is a sink NE. We can utilize this arrangement and get contradiction. Appendix \ref{fig:appendixsaddle} shows all these cases.

\paragraph{Case 1: $\mathbf{x}\in e^{A}_{2}e^{A}_{3}, \mathbf{y}\in e^{B}_{1}e^{B}_{2}$ or $\mathbf{x}\in e^{A}_{1}e^{A}_{2}, \mathbf{y}\in e^{B}_{2}e^{B}_{3}$.} Since $(\mathbf{x},\mathbf{y})$ is a NE, $\mathbf{x}= e^{A}_{2}e^{A}_{3}\cap l^{B}_{12}$, $\mathbf{y}=e^{B}_{1}e^{B}_{2}\cap l^{A}_{23}$ (see the red points in the left subgraph in Figure \ref{fig:appendixsaddle}). And since $(\mathbf{x},\mathbf{y})$ is a sink, Cell \uppercase\expandafter{\romannumeral1} is $\tilde{Z}_{2}^{B}\times\tilde{Z}_{3}^{A}$, Cell \uppercase\expandafter{\romannumeral2} is $\tilde{Z}_{2}^{B}\times\tilde{Z}_{2}^{A}$, Cell \uppercase\expandafter{\romannumeral4} is $\tilde{Z}_{1}^{B}\times\tilde{Z}_{3}^{A}$, Cell \uppercase\expandafter{\romannumeral5} is $\tilde{Z}_{1}^{B}\times\tilde{Z}_{2}^{A}$. Then we can find that the profile $(e^{A}_{1}e^{A}_{2}\cap l^{B}_{13}, e^{B}_{1}e^{B}_{3}\cap l^{A}_{12})$ is a saddle NE (see the yellow points in the left subgraph in Figure \ref{fig:appendixsaddle}). The case for $\mathbf{x}\in e^{A}_{1}e^{A}_{2}, \mathbf{y}\in e^{B}_{2}e^{B}_{3}$ is the same.

\paragraph{Case 2: $\mathbf{x}\in e^{A}_{2}e^{A}_{3}, \mathbf{y}\in e^{B}_{2}e^{B}_{3}$ or $\mathbf{x}\in e^{A}_{1}e^{A}_{2}, \mathbf{y}\in e^{B}_{1}e^{B}_{2}$.} When $\mathbf{x}\in e^{A}_{2}e^{A}_{3}, \mathbf{y}\in e^{B}_{2}e^{B}_{3}$ (see the red points in the middle subgraph in Figure \ref{fig:appendixsaddle}), since $(\mathbf{x},\mathbf{y})$ is a sink NE, the arrangement of best response should satisfy $e^{A}_{2}\in Z^{B}_{2}, e^{A}_{3}\in Z^{B}_{3}$ and $e^{B}_{2}\in Z^{A}_{3}, e^{B}_{1}\in Z^{A}_{2}$. Then we can check there is no other NE, which contradicts to the assumption of multiple NEs in Proposition \ref{pro:exist-saddle}. The case for $\mathbf{x}\in e^{A}_{1}e^{A}_{2}, \mathbf{y}\in e^{B}_{1}e^{B}_{2}$ is the same.

\bibliographystyle{unsrt}  
\bibliography{references}

\end{document}